  \crefname{section}{Section}{Sections}
  \crefname{figure}{Figure}{Figures}
  \crefname{theorem}{Theorem}{Theorems}
  \crefname{lemma}{Lemma}{Lemmas}
  \crefname{proposition}{Proposition}{Propositions}  
  \crefname{corollary}{Corollary}{Corollaries}
  \crefname{definition}{Definition}{Definitions}
  \crefname{example}{Example}{Examples}
  \crefname{remark}{Remark}{Remarks}
\newtheorem{theorem}{Theorem}[section]
\newtheorem{lemma}[theorem]{Lemma}
\newtheorem{proposition}[theorem]{Proposition}
\newtheorem{corollary}[theorem]{Corollary}
\theoremstyle{definition}
\theoremstyle{remark}
\newtheorem{remark}[theorem]{Remark}
\DeclareFontFamily{U}{matha}{\hyphenchar\font45}
\DeclareFontShape{U}{matha}{m}{n}{
<-6> matha5 <6-7> matha6 <7-8> matha7
<8-9> matha8 <9-10> matha9
<10-12> matha10 <12-> matha12
}{}
\DeclareSymbolFont{matha}{U}{matha}{m}{n}
\DeclareFontFamily{U}{mathx}{\hyphenchar\font45}
\DeclareFontShape{U}{mathx}{m}{n}{
<-6> mathx5 <6-7> mathx6 <7-8> mathx7
<8-9> mathx8 <9-10> mathx9
<10-12> mathx10 <12-> mathx12
}{}
\DeclareSymbolFont{mathx}{U}{mathx}{m}{n}
\DeclareMathDelimiter{\vvvert} {0}{matha}{"7E}{mathx}{"17}%
\DeclarePairedDelimiterX{\normiii}[1]
{\vvvert}
{\vvvert}
{\ifblank{#1}{\:\cdot\:}{#1}}
\DeclarePairedDelimiter\abs{\lvert}{\rvert}
\DeclarePairedDelimiter\norm{\lVert}{\rVert}
\newcommand*{\defeq}{\mathrel{\mathop:}=}
\newcommand*{\eqdef}{=\mathrel{\mathop:}}
\renewcommand*{\Re}{\operatorname{Re}}
\renewcommand*{\Im}{\operatorname{Im}}
\newcommand*{\diam}{\operatorname{diam}}
\newcommand*{\dist}{\operatorname{dist}}
\newcommand*{\pr}{\mathbb P}
\newcommand*{\ex}{\mathbb E}
\newcommand*{\NN}{\mathbb N}
\newcommand*{\RR}{\mathbb R}
\newcommand*{\CC}{\mathbb C}
\newcommand*{\HH}{\mathbb H}
\newcommand*{\barH}{\overline\HH}
\newcommand*{\slek}{SLE$_\kappa$}
\title{Regularity of SLE in $(t,\kappa)$ and refined GRR estimates}
\author{Peter K. Friz\footnote{\texttt{friz@math.tu-berlin.de}} , 
Huy Tran\footnote{\texttt{tran@math.tu-berlin.de}} , 
Yizheng Yuan\footnote{\texttt{yuan@math.tu-berlin.de}} \\
TU and WIAS Berlin, TU Berlin, TU Berlin}
\begin{document}
\maketitle

%
%
%
%
%
%
%

\begin{abstract}
Schramm-Loewner evolution (SLE$_\kappa$) is classically studied via Loewner evolution with half-plane  capacity parametrization, driven by $\sqrt{\kappa}$ times Brownian motion. This yields a (half-plane) valued random field $\gamma = \gamma (t, \kappa; \omega)$. (H\"older) regularity of in $\gamma(\cdot,\kappa;\omega$), a.k.a. SLE trace, has been considered by many authors, starting with Rohde-Schramm (2005). Subsequently, Johansson Viklund, Rohde, and Wong (2014) showed a.s. H\"older continuity of this random field for $\kappa < 8(2-\sqrt{3})$. In this paper, we improve their result to joint H\"older continuity up to $\kappa < 8/3$. Moreover, we show that the \slek{} trace $\gamma(\cdot,\kappa)$ (as a continuous path) is stochastically continuous in $\kappa$ at all $\kappa \neq 8$. Our proofs rely on a novel variation of the Garsia-Rodemich-Rumsey (GRR) inequality, which is of independent interest. 
\end{abstract}

\tableofcontents

\section{Introduction}

Schramm-Loewner evolution (SLE) is a random (non-self-crossing) path connecting two boundary points of a domain. To be more precise, it is a family of such random paths indexed by a parameter $\kappa \ge 0$. It has been first introduced by O. Schramm (2000) to describe several random models from statistical physics. Since then, many authors have intensely studied this random object. Many connections to discrete processes and other geometric objects have been made, and nowadays SLE is one of the key objects in modern probability theory.

The typical way of constructing SLE is via the Loewner differential equation (see \cref{sec:sle_continuity}) which provides a correspondence between real-valued functions (``driving functions'') and certain growing families of sets (``hulls'') in a planar domain. For many (in particular more regular) driving functions, the growing families of hulls (or their boundaries) are continuous curves called traces. For Brownian motion, it is a non-trivial fact that for fixed $\kappa \ge 0$, the driving function $\sqrt{\kappa}B$ almost surely generates a continuous trace which we call \slek{} trace (see \cite{RS05,LSW04}).

There has been a series of papers investigating the analytic properties of SLE, such as (Hölder and $p$-variation) regularity of the trace \cite{RS05,Law09, JVL11, FT17}. See also \cite{FS17,STW19} for some recent attempts to understand better the existence of SLE trace. 

A natural question is whether the \slek{} trace obtained from this construction varies continuously in the parameter $\kappa$. Another natural question is whether with probability $1$ the construction produces a continuous trace simultaneously for all $\kappa \ge 0$. These questions have been studied in \cite{JVRW14} where the authors showed that with probability $1$, the \slek{} trace exists and is continuous in the range $\kappa \in [0,8(2-\sqrt{3})[$. In our paper we improve their result and extend it to $\kappa \in [0,8/3[$. (In fact, our result is a bit stronger than the following statement, see \cref{thm:gamma_existence_hoelder} and \cref{thm:fv_hoelder_convergence}.)

\begin{theorem}\label{thm:main}
Let $B$ be a standard Brownian motion. Then almost surely the \slek{} trace $\gamma^\kappa$ driven by $\sqrt{\kappa}B_t$, $t \in [0,1]$, exists for all $\kappa \in [0,8/3[$, and the trace (parametrised by half-plane capacity) is continuous in $\kappa \in [0,8/3[$ with respect to the supremum distance on $[0,1]$. 
\end{theorem}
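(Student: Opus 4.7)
The plan is to couple all SLE$_\kappa$ traces on a common probability space by driving them with the same Brownian motion $B$ --- that is, setting $U^\kappa_t = \sqrt{\kappa}\,B_t$ --- and then to apply a two-parameter Garsia--Rodemich--Rumsey (GRR) argument in $(t,\kappa)$. Concretely, the target is a bivariate moment estimate of the form
\[
\ex\bigl[\,|\gamma^{\kappa_1}(t_1)-\gamma^{\kappa_2}(t_2)|^p\,\bigr] \le C\,\bigl(|t_1-t_2|^{\alpha p}+|\kappa_1-\kappa_2|^{\beta p}\bigr),
\]
valid for $p$ large and $\kappa_1,\kappa_2$ in any compact subinterval of $[0,8/3[$, with positive exponents $\alpha,\beta$; this is then fed into the paper's refined GRR inequality to obtain a.s.\ joint H\"older continuity of $(t,\kappa)\mapsto \gamma^\kappa(t)$ on $[0,1]\times[0,\kappa_0]$ for every $\kappa_0<8/3$. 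Covering $[0,8/3[$ by countably many such sub-intervals then yields the theorem.

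For the $t$-increment I would follow the Rohde--Schramm strategy: write $\gamma^\kappa(t)=\lim_{y\downarrow 0}f^\kappa_t(iy)$, where $f^\kappa_t(z)=(g^\kappa_t)^{-1}(z+U^\kappa_t)$ is the centred inverse Loewner map, and use moment bounds on $|(f^\kappa_t)'(iy)|^q$ (proved by Rohde--Schramm and sharpened by Johansson Viklund--Rohde--Wong) to control vertical integrals, then compare $f^\kappa_{t_1}$ with $f^\kappa_{t_2}$ via the Loewner equation. For the $\kappa$-increment I would couple $g^{\kappa_1}$ and $g^{\kappa_2}$ via the same Brownian motion, so that the difference of drivers is $(\sqrt{\kappa_1}-\sqrt{\kappa_2})B$, and then estimate $|\gamma^{\kappa_1}(t)-\gamma^{\kappa_2}(t)|$ by a Gr\"onwall-type comparison on the backward Loewner ODE, combined with conformal derivative bounds applied pointwise in $\kappa$ along a linear interpolation.

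The main obstacle is the $\kappa$-increment. Small perturbations of the driving function can be amplified substantially when the flow pushes points close to the real line, so the derivative estimates must be used delicately and, crucially, uniformly over a non-trivial $\kappa$-interval. The threshold $8/3$ should arise precisely where the product of the $\kappa$-comparison factor and the required moments of $(f^\kappa_t)'$ ceases to be integrable with enough slack for the GRR machinery to close --- strictly better than the range $[0,8(2-\sqrt 3)[$ of \cite{JVRW14} because the refined GRR tolerates a weaker moment input and the above coupling converts $\kappa$-regularity into a question about derivative moments rather than a full re-run of the Rohde--Schramm argument. A secondary subtlety is the choice of GRR exponents: the classical two-parameter GRR loses polynomial factors in $p$ that are fatal here, which is exactly why the paper develops a refined variant; once that tool is in hand, the remaining argument is a standard chaining and Borel--Cantelli assembly along an exhausting sequence $\kappa_0\uparrow 8/3$.
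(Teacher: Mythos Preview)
Your overall architecture --- common Brownian coupling, a two-parameter Kolmogorov/GRR argument, and exhaustion by compact $\kappa$-intervals --- matches the paper. But two of the load-bearing steps are misdescribed in a way that would not close at $8/3$.

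First, the single-exponent bivariate estimate you wrote, $\ex|\gamma^{\kappa_1}(t_1)-\gamma^{\kappa_2}(t_2)|^p \lesssim |t_1-t_2|^{\alpha p}+|\kappa_1-\kappa_2|^{\beta p}$ ``for $p$ large'', is not what is available and not what is used. In the $t$-direction the only usable moment is the $\lambda$-th one with $\lambda=\lambda(\kappa,r)$ bounded (near $2$ for $\kappa$ near $8/3$), giving $\ex|\gamma(t,\kappa)-\gamma(s,\kappa)|^\lambda \lesssim |t-s|^{(\zeta+\lambda)/2}$. In the $\kappa$-direction one has a \emph{different}, strictly larger moment: $\ex|\gamma(s,\kappa)-\gamma(s,\tilde\kappa)|^p \lesssim |\kappa-\tilde\kappa|^p$ for any $p<1+8/\kappa$ (near $4$ for $\kappa$ near $8/3$). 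The whole point of the paper's refined GRR is that it accepts \emph{different} $q$'s in the two directions; the resulting applicability condition is $\bigl(\tfrac{\zeta+\lambda}{2}\bigr)^{-1}+p^{-1}<1$, which is exactly what yields $\kappa<8/3$. If you force a common exponent $q=\lambda$ (as your displayed estimate does), the condition degenerates to $\bigl(\tfrac{\zeta+\lambda}{2}\bigr)^{-1}+\lambda^{-1}<1$, which fails well before $8/3$. Your remark that the classical GRR ``loses polynomial factors in $p$'' is not the issue; the issue is the mismatch $\lambda\neq p$.

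Second, your proposed $\kappa$-increment bound via ``Gr\"onwall on the backward ODE plus derivative bounds along a linear interpolation'' is essentially the route of \cite{JVRW14}; see \cref{rm:grr_application_jvrw}. That yields only $\ex|\hat f^\kappa_t(i\delta)-\hat f^{\tilde\kappa}_t(i\delta)|^{2\lambda-\varepsilon}\lesssim |\Delta\sqrt\kappa|^{2\lambda-\varepsilon}\delta^{-\lambda+\zeta-\varepsilon}$, and plugging this into the refined GRR recovers only the JVRW range $\kappa<8(2-\sqrt3)$. Reaching $p$ all the way up to $1+8/\kappa$ (\cref{thm:f_diffkappa_moment}) requires a genuinely sharper comparison: a refinement of the JVRW integral inequality that factorises into two one-parameter pieces (\cref{thm:h_diff}), followed by the Lawler reparametrisation $\sigma(s)$ and the tail estimates for $\cosh J_s$ from \cite{Law09,JVL11}. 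Without this, the $8/3$ threshold is not attained. Finally, you omit the last step: once the GRR gives a jointly continuous field $\tilde\gamma(t,\kappa)$, one still must argue (via Carath\'eodory kernel convergence from a countable dense set of $\kappa$'s) that $\tilde\gamma(\cdot,\kappa)$ really is the Loewner trace driven by $\sqrt\kappa B$ for \emph{every} $\kappa$ in the interval.
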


Stability of SLE trace was also recently studied in \cite[Theorem 1.10]{KS17}. They show the law of $\gamma^{\kappa_n} \in C([0,1],\HH)$   converges weakly to the law of $\gamma^{\kappa}$ in the topology of uniform convergence, whenever $\kappa_n \to \kappa < 8$. Of course, we get this as a trivial corollary of Theorem \ref{thm:main} in case of $\kappa < 8/3$. Our \cref{thm:main2} (proved in \cref{sec:stochastic_continuity}) strengthens  \cite[Theorem 1.10]{KS17} in three ways: \\
(i) we allow for any $\kappa \neq 8$; \\
(ii) we improve weak convergence to convergence in probability; \\
(iii) we strengthen convergence in $C([0,1],\HH)$ with uniform topology to \linebreak$C^{p\text{-var}}([0,1],\HH)$ with optimal (cf. \cite{FT17}) $p$-variation parameter, i.e. any \linebreak$p > (1 + \kappa/8) \wedge 2$. The analogous statement for $\alpha$-Hölder topologies, $\alpha < \left(1-\frac{\kappa}{24+2\kappa-8\sqrt{8+\kappa}}\right) \wedge \frac{1}{2}$, is also true.

Here and below we write $\| f \|^p_{p\text{-var};[a,b]} \defeq \sup \sum_{[s,t]\in \pi} |f(t)-f(s)|^p$, with $\sup$ taken over all partitions $\pi$ of $[a,b]$. The following theorem will be proved as \cref{co:sle_stochastic_continuity}.

\begin{theorem} \label{thm:main2}
Let $B$ be a standard Brownian motion, and $\gamma^\kappa$ the \slek{} trace driven by $\sqrt{\kappa}B_t$, $t \in [0,1]$, (and parametrised by half-plane capacity). For any $\kappa > 0$, $\kappa \neq 8$ and any sequence $\kappa_n \to \kappa$ we then have $\|\gamma^\kappa-\gamma^{\kappa_n}\|_{p\text{-var};[0,1]} \to 0$ in probability, for any $p > (1 + \kappa / 8) \wedge 2$. 
\end{theorem}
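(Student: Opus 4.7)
The plan is to combine two ingredients via an interpolation argument: (i) convergence $\gamma^{\kappa_n} \to \gamma^\kappa$ in probability in the supremum norm on $C([0,1],\HH)$, and (ii) a uniform-in-$n$ moment bound on $\|\gamma^{\kappa_n}\|_{p'\text{-var};[0,1]}$ for some $p' \in ((1+\kappa/8)\wedge 2,\, p)$. The interpolation is the elementary inequality
\begin{equation*}
\|f\|_{p\text{-var};[0,1]} \;\leq\; \bigl(2\|f\|_{\infty;[0,1]}\bigr)^{1 - p'/p}\, \|f\|_{p'\text{-var};[0,1]}^{\,p'/p},
\end{equation*}
applied to $f = \gamma^\kappa - \gamma^{\kappa_n}$, together with the triangle inequality $\|\gamma^\kappa - \gamma^{\kappa_n}\|_{p'\text{-var}} \leq \|\gamma^\kappa\|_{p'\text{-var}} + \|\gamma^{\kappa_n}\|_{p'\text{-var}}$. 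Combined with Markov's inequality, (i) and (ii) give the desired convergence in probability in $p$-variation. Since $(1+\kappa_n/8)\wedge 2 \to (1+\kappa/8)\wedge 2$ as $n \to \infty$, for all $n$ large enough such a $p'$ lies strictly above the critical exponent associated with $\gamma^{\kappa_n}$.

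For (i), I will couple all $\gamma^{\kappa_n}$ through the common Brownian motion $B$, so that $\sqrt{\kappa_n} B \to \sqrt{\kappa} B$ almost surely in the supremum norm. In the range $\kappa \in [0, 8/3)$, the joint continuity of \cref{thm:main} immediately gives almost sure uniform convergence of traces, which is stronger than (i). For $\kappa \in (8/3, 8) \cup (8, \infty)$, the trace map from driving function to trace is not globally supremum-continuous, but its local modulus of continuity is governed by Rohde--Schramm style moment estimates on the Loewner derivative $|f_t'(U_t + iy)|$, where $f_t$ is the downward flow. These estimates hold uniformly for $\kappa$ ranging over a compact set disjoint from $\{8\}$. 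Restricting to a high-probability event on which the derivative bounds are valid uniformly in $n$, the almost sure convergence of driving functions forces supremum convergence of the traces. Letting the exceptional event have probability tending to zero yields (i). This is the coupled enhancement of the weak convergence \cite[Theorem~1.10]{KS17}.

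For (ii), I will apply the refined GRR inequality developed earlier in the paper, together with the same Loewner-derivative moment bounds, to produce $\sup_n \mathbb{E}\bigl[\|\gamma^{\kappa_n}\|_{p'\text{-var};[0,1]}^q\bigr] < \infty$ for a suitable $q > 0$. The uniformity in $n$ is a consequence of the continuous dependence on $\kappa$ of the exponents appearing in the derivative moment bounds, away from $\kappa = 8$.

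The principal obstacle is (i) in the regime $\kappa > 8$. Here the trace is space-filling, and although Rohde--Schramm type derivative estimates remain available, producing a supremum-topology modulus of continuity that is stable under small perturbations of $\kappa$ requires careful tracking of how the relevant exponents and constants depend on $\kappa$. The exclusion $\kappa \neq 8$ is sharp in this approach, since the critical moment exponents degenerate precisely at $\kappa = 8$.
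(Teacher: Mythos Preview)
Your interpolation step---upgrading convergence in probability in $\|\cdot\|_\infty$ to convergence in $p$-variation via tightness of $\|\gamma^{\kappa_n}\|_{p'\text{-var}}$---matches the paper's \cref{co:sle_stochastic_continuity} exactly, including the appeal to \cite{FT17} for the uniform $p'$-variation moment bounds.

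The substantive difference is how (i) is obtained. The paper does \emph{not} argue via stability of the Loewner map on a high-probability event. Instead, \cref{thm:sle_stochastic_continuity} proves directly that
\[
\ex\big[\|\gamma(\cdot,\kappa)-\gamma(\cdot,\tilde\kappa)\|_{C^\alpha}^q\big] \le C|\kappa-\tilde\kappa|^r
\]
by applying the one-dimensional GRR with mixed exponents (\cref{thm:grr_1d}) to $G(t)=\gamma(t,\kappa)-\gamma(t,\tilde\kappa)$. The key device is to split $|G(t)-G(s)|$ according to whether $|t-s|\le|\kappa-\tilde\kappa|$ (use the $t$-increment moment bound with exponent $\lambda$) or $|t-s|>|\kappa-\tilde\kappa|$ (use the $\kappa$-increment moment bound with exponent $p$). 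The latter case relies on the new estimate $\ex|\gamma(s,\kappa)-\gamma(s,\tilde\kappa)|^p \lesssim |\kappa-\tilde\kappa|^p$ of \cref{thm:f_diffkappa_moment}, which is the paper's main technical input and which your proposal bypasses entirely.

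Your alternative route for (i) has a gap as written. The phrase ``restricting to a high-probability event on which the derivative bounds are valid uniformly in $n$'' is exactly the non-trivial step: Rohde--Schramm moment bounds on $|(\hat f_t^{\kappa'})'(iy)|$ are indeed locally uniform in $\kappa'$, but converting them into pathwise bounds uniform in $t$ already requires a GRR/Kolmogorov argument (this is \cite{FT17} for each fixed $\kappa'$). What you then need is that the resulting random H\"older constants---or equivalently $\|\gamma^{\kappa'}-\hat f_\cdot^{\kappa'}(iy)\|_\infty$---are tight in $\kappa'$ near $\kappa$, so that the three-term splitting $\gamma^{\kappa_n}-\gamma^\kappa = (\gamma^{\kappa_n}-\hat f_\cdot^{\kappa_n}(iy))+(\hat f_\cdot^{\kappa_n}(iy)-\hat f_\cdot^{\kappa}(iy))+(\hat f_\cdot^{\kappa}(iy)-\gamma^\kappa)$ closes. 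This can be made to work (it amounts to observing that the constants in \cite{FT17} depend continuously on $\kappa$), but your sketch does not supply this, and ``convergence of driving functions forces convergence of traces'' is precisely the statement that fails without such uniform control. The paper's route via \cref{thm:f_diffkappa_moment} is more direct and yields the quantitative rate $|\kappa-\tilde\kappa|^r$ as a bonus.
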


There are two major new ingredients to our proofs. First, we prove in \cref{sec:f_diffkappa_moment_pf} a refined moment estimate for SLE increments in $\kappa$, improving upon \cite{JVRW14}.
Using standard notation \cite{RS05,Law05}, for $\kappa > 0$, we denote by $(g^\kappa_t)_{t \ge 0}$ the forward SLE flow driven by $\sqrt{\kappa}B$, $j=1,2$, and by $\hat f^\kappa_t = (g^\kappa_t)^{-1}( \cdot +\sqrt{\kappa}B_t)$ the recentred inverse flow, also defined in  \cref{sec:sle_continuity} below.

Write $a \lesssim b$ for $a \le Cb$, with suitable constant $C<\infty$. The improved estimate (\cref{thm:f_diffkappa_moment}) reads 
\begin{equation}\label{eq:f_diffkappa_moment}
    \ex |\hat f^\kappa_t(i\delta)-\hat f^{\tilde\kappa}_t(i\delta)|^p \lesssim |\sqrt{\kappa}-\sqrt{\tilde\kappa}|^p
\end{equation}
for $1 \le p < 1+\frac{8}{\kappa}$. The interest in this
estimate is when $p$ is close to $1 + 8/\kappa$. No such estimate can be extracted from \cite{JVRW14}, as we explain in some more detail in \cref{rm:grr_application_jvrw} below.

Secondly, our way of exploiting moment estimates such as \eqref{eq:f_diffkappa_moment} is fundamentally different in comparison with the Whitney-type partition technique of ``$(t,y,\kappa)$''-space \cite{JVRW14} (already seen in \cite{RS05} without $\kappa$), combined with a Borel-Cantelli argument. Our key tool here is a new higher-dimensional variant of the Garsia-Rodemich-Rumsey (GRR) inequality \cite{GRR70} which is useful in its own right, essentially whenever one deals with random fields with very ``different'' -- in our case $t$ and $\kappa$ --  variables. The GRR inequality has been a useful tool in stochastic analysis to pass from moment bounds for stochastic processes to almost sure estimates of their regularity.

Let us briefly discuss the existing
(higher-dimensional) GRR estimates (e.g. \cite[Exercise 2.4.1]{SV79}, \cite{AI96,FKP06,HL13}) and their shortcomings in our setting. When we try to apply one of these versions to SLE (as a two-parameter random field in $(t,\kappa)$), we wish to estimate moments of $|\gamma(t,\kappa)-\gamma(s,\tilde\kappa)|$, where we denote the \slek{} trace by $\gamma(\cdot,\kappa)$. In \cite{FT17}, the estimate
\[\ex|\gamma(t,\kappa)-\gamma(s,\kappa)|^\lambda \lesssim |t-s|^{(\lambda+\zeta)/2}\]
with suitable $\lambda>1$ and $\zeta$ has been given. We will show in \cref{thm:sle_diff_moments} that
\[\ex|\gamma(s,\kappa)-\gamma(s,\tilde\kappa)|^p \lesssim |\kappa-\tilde\kappa|^p\]
for suitable $p>1$. Applying this estimate with $p=\lambda$, we obtain an estimate for $\ex|\gamma(t,\kappa)-\gamma(s,\tilde\kappa)|^\lambda$, and can apply a GRR lemma from \cite{AI96} or \cite{FKP06}. The condition for applying it is $((\lambda+\zeta)/2)^{-1}+p^{-1} = ((\lambda+\zeta)/2)^{-1}+\lambda^{-1} < 1$. But in doing so, we do not use the best estimates available to us. That is, the above estimate typically holds for some $p > \lambda$. On the other hand, we can only estimate the $\lambda$-th moment (and no higher ones) of $|\gamma(t,\kappa)-\gamma(s,\kappa)|$. This asks for a version of the GRR lemma that respects distinct exponents in the available estimates, and is applicable when $((\lambda+\zeta)/2)^{-1}+p^{-1} < 1$ with $p > \lambda$ (a weaker condition than above).

We are going to prove the following refined GRR estimates in two dimensions, as required by our application, noting that extension to higher dimension follow the same argument.
\begin{lemma}
Let $G$ be a continuous function (defined on some rectangle) such that, for some integers $J_1, J_2$,
\begin{alignat*}{2}
|G(x_1,x_2)-G(y_1,y_2)| &\le |G(x_1,x_2)-G(y_1,x_2)| &&+ |G(y_1,x_2)-G(y_1,y_2)|\\
&\le \sum^{J_1}_{j=1} |A_{1j}(x_1,y_1;x_2)| &&+ \sum^{J_1}_{j=1} |A_{2j}(y_1;x_2,y_2)|.
\end{alignat*}
Suppose that for all $j$,
\begin{align*}
\iiint \frac{|A_{1j}(u_1,v_1;u_2)|^{q_{1j}}}{|u_1-v_1|^{\beta_{1j}}} \, du_1 \, dv_1 \, du_2 &< \infty,\\
\iiint \frac{|A_{2j}(v_1;u_2,v_2)|^{q_{2j}}}{|u_2-v_2|^{\beta_{2j}}} \, dv_1 \, du_2 \, dv_2 &< \infty.
\end{align*}
Then, under suitable conditions on the exponents,
\begin{equation*}
    |G(x_1,x_2)-G(y_1,y_2)| \lesssim |x_1-y_1|^{\gamma^{(1)}}+|x_2-y_2|^{\gamma^{(2)}}. \\
\end{equation*}
\end{lemma}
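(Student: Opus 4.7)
The strategy is to apply the classical Garsia--Rodemich--Rumsey (GRR) inequality one variable at a time, using the triangle decomposition furnished by the hypothesis to handle the two coordinate directions independently.

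First, I would establish a one-dimensional GRR variant that accommodates a finite sum of controls with distinct exponents: if a continuous $f$ on an interval satisfies $|f(u)-f(v)| \le \sum_j |A_j(u,v)|$ and $\iint |A_j|^{q_j}/|u-v|^{\beta_j}\, du\, dv \le K_j$ for each $j$, then Garsia's dyadic chaining argument --- where at each scale one selects the index $j$ yielding the sharpest local estimate --- gives
$$|f(x)-f(y)| \lesssim \sum_j K_j^{1/q_j} |x-y|^{(\beta_j-2)/q_j},$$
provided $\beta_j > 2$ for every $j$. Garsia's original proof carries over essentially unchanged: the $q$-th power of the bound on $|f(u)-f(v)|$ splits additively because each $A_j$ contributes its own scale-decoupled geometric sum.

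Second, I would apply this 1D variant in the $x_1$ direction with $x_2$ held fixed. By Fubini the triple integrability hypothesis implies $K_{1j}(x_2) \defeq \iint |A_{1j}(u_1,v_1;x_2)|^{q_{1j}}/|u_1-v_1|^{\beta_{1j}}\, du_1\, dv_1 < \infty$ for almost every $x_2$, and for such $x_2$ the 1D GRR applied to $G(\cdot,x_2)$ yields
$$|G(x_1,x_2)-G(y_1,x_2)| \lesssim \sum_j K_{1j}(x_2)^{1/q_{1j}} |x_1-y_1|^{(\beta_{1j}-2)/q_{1j}}.$$
Continuity of $G$ extends the estimate to every $x_2$ in the rectangle, after the fiber constants have been controlled. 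A symmetric application bounds $|G(y_1,x_2)-G(y_1,y_2)|$ in terms of $K_{2j}(y_1)^{1/q_{2j}}|x_2-y_2|^{(\beta_{2j}-2)/q_{2j}}$, and summing gives a bound of the advertised shape with $\gamma^{(i)}=\min_j(\beta_{ij}-2)/q_{ij}$.

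The main obstacle lies in upgrading these fiberwise estimates to a bound with a constant \emph{independent} of the transverse coordinate: the $K_{1j}(x_2)^{1/q_{1j}}$ are only $L^1$-integrable in $x_2$ rather than uniformly bounded. The natural remedy is a second layer of GRR-type (or maximal-function) reasoning in the transverse direction, controlling the a.e.\ fiber constants via the full three-dimensional integrability of $|A_{ij}|^{q_{ij}}/|u_i-v_i|^{\beta_{ij}}$. The ``suitable conditions on the exponents'' referenced in the statement are precisely the integrability margins needed for this second pass to close --- pinning down the sharp such conditions, without sacrificing any of the exponent freedom gained in Step~1, is the delicate point at which this refined GRR improves upon the standard isotropic version run with a single $(q,\beta)$.
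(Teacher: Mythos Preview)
Your fiberwise strategy has a genuine gap at exactly the point you flag, and the proposed ``second layer of GRR'' does not close it. After the one-dimensional GRR in $x_1$ you obtain a bound with constant $K_{1j}(x_2)^{1/q_{1j}}$, and the only control the hypotheses give on $x_2 \mapsto K_{1j}(x_2)$ is that it lies in $L^1$. There is no increment estimate for $K_{1j}(\cdot)$ in the $x_2$-variable: the functions $A_{2j}$ control increments of $G$, not of $K_{1j}$, so a further GRR or maximal-function pass in $x_2$ has nothing to bite on. An $L^1$ function can be arbitrarily large on arbitrarily small sets, so no uniform (or even H\"older) control on the fibre constants follows. Indeed, if your scheme worked it would yield exponents $\gamma^{(i)}=\min_j(\beta_{ij}-2)/q_{ij}$ under the mere condition $\beta_{ij}>2$, which is strictly weaker than what is actually needed and would be too good to be true.

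The paper avoids this by never separating the two variables. One telescopes $G(x_1,x_2)$ against averages over a nested sequence of \emph{two-dimensional} rectangles $I_1^n\times I_2^n$ with $|I_i^n|=R_i^{-n}d_i$, shrinking simultaneously in both directions. When bounding $\big|\fint_{I^n}G-\fint_{I^{n-1}}G\big|$ via the hypothesis, the term $A_{1j}$ is averaged not only over $I_1^n\times I_1^{n-1}$ but also over $I_2^n$; this transverse averaging is precisely what consumes the $du_2$-integrability and replaces your uncontrolled fibre constant by $(R_2^{-n}d_2)^{-1/q_{1j}}M_{1j}^{1/q_{1j}}$. The price is an extra factor $R_2^{n/q_{1j}}$ in the summand, and summability in $n$ then forces a coupling between $R_1$ and $R_2$; optimising this coupling is exactly where the condition $(\beta_1-2)(\beta_2-2)>1$ (equivalently $\beta_i>2$ and $\frac{1}{\beta_1-1}+\frac{1}{\beta_2-1}<1$) arises, and why the resulting H\"older exponents $\gamma^{(1)}_{1j}=(\beta_{1j}-2-b)/q_{1j}$ etc.\ are smaller than the naive $(\beta_{1j}-2)/q_{1j}$ your fibrewise argument would suggest. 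The genuinely two-dimensional chaining is the missing idea.
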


Observe that the exponents $q_{1j}, q_{2j}$ are allowed to vary, exactly as required for our application to SLE. We also note that the flexibility to have $J_1,J_2 >1$ is used in the proof of Theorem \ref{thm:main2} but not \ref{thm:main}.

One might ask whether one can further improve \cref{thm:main} to all $\kappa \ge 0$. With the methods of this paper, it would require a better moment estimate in the style of \eqref{eq:f_diffkappa_moment} with larger exponent on the right-hand side. If such an estimate were to hold true with arbitrarily large exponent on the right-hand side (and any suitable exponent on the left-hand side), which is not clear to us, almost sure continuity of the random field in all $(t,\kappa)$ with $\kappa \neq 8$ would follow.

\medskip
\textbf{Acknowledgements:} PKF and HT acknowledge funding from European Research Council through Consolidator Grant 683164. All authors would like to thank S. Rohde and A. Shekhar for stimulating discussions. Moreover, we thank the referees for their comments, in particular for pointing out the literature on metric entropy bounds and majorising measures, and for suggesting simplified arguments in the proofs of \cref{thm:grr} and \cref{thm:kolmogorov}.

%
%
%
%

\section{A Garsia-Rodemich-Rumsey lemma with mixed exponents}

In this section we prove a variant of the Garsia-Rodemich-Rumsey inequality and Kolmogorov's continuity theorem. The classical Kolmogorov's theorem goes by a ``chaining'' argument (see e.g. \cite[Theorem 1.4.1]{Kun90} or \cite[Appendix A.2]{Tal14}), but can also be obtained from the GRR inequality (see e.g. \cite[Corollary 2.1.5]{SV79}). In the case of proving Hölder continuity of processes, the GRR approach provides more powerful statements (cf. \cite[Appendix A]{FV10}). In particular, we obtain bounds on the Hölder constant of the process that are more informative and easier to manipulate, which will be useful in the proof of \cref{thm:fv_hoelder_convergence}. (Although there are drawbacks of the GRR approach when generalising to more refined modulus of continuity, see the discussion in \cite[Appendix A.4]{Tal14}.)

We discuss some of the extensive literature that deal with the generality of GRR and Kolmogorov's theorem. The reader may skip this discussion and continue straight with the results of this section.

There are some direct generalisations of GRR and Kolmogorov's theorem to higher dimensions, e.g. \cite[Exercise 2.4.1]{SV79}, \cite[Theorem 1.4.1]{Kun90}, \cite{AI96,FKP06,HL13}. Moreover, there have been more systematic studies in a general setting under the titles metric entropy bounds and majorising measures. They derive bounds and path continuity of stochastic processes mainly from the structure of certain pseudometrics that the processes induce on the parameter space, such as $d_X(s,t) \defeq (\ex\abs{X(s)-X(t)}^2)^{1/2}$. A large amount of the theory is found in the book by Talagrand \cite{Tal14}. These results due to, among others, R. M. Dudley, N. K\^{o}no, X. Fernique, M. Talagrand, and W. Bednorz. Their main purpose is to allow different stuctures of the parameter space and inhomogeneity of the stochastic process (see e.g. \cite{Kon80,Bed07,Tal14}).

We explain why the existing results do not cover the adaption that we are seeking in this section. The general idea for applying the theory of metric entropy bounds would be considering the metric $d_X(s,t) = (\ex\abs{X(s)-X(t)}^{q})^{1/q}$ for some $q>1$.

Let us consider a random process defined on the parameter space $T = [0,1]^2$ that satisfies
\begin{align}\label{eq:kolmogorov_demo}
\begin{split}
\ex\abs{X(s_1,s_2)-X(t_1,s_2)}^{q_1} &\le \abs{s_1-t_1}^{\alpha_1},\\
\ex\abs{X(t_1,s_2)-X(t_1,t_2)}^{q_2} &\le \abs{s_2-t_2}^{\alpha_2},
\end{split}
\end{align}
where $q_1$ and $q_2$ might be different, say $q_1 < q_2$. By Hölder's inequality,
\begin{equation}\label{eq:kolmogorov_demo_bad_hoelder}
\ex\abs{X(t_1,s_2)-X(t_1,t_2)}^{q_1} \le \left( \ex\abs{X(t_1,s_2)-X(t_1,t_2)}^{q_2} \right)^{q_1/q_2} .
\end{equation}
Write $t = (t_1,t_2)$, $s = (s_1,s_2)$. We may let
\[ (\ex\abs{X(s)-X(t)}^{q})^{1/q} \le \abs{s_1-t_1}^{\alpha_1/q_1}+\abs{s_2-t_2}^{\alpha_2/q_2} \eqdef \normiii{s-t} \eqdef d(s,t) \]
where we can take $q = q_1$ (but not $q=q_2$ without knowing any bounds on higher moments of $\abs{X(s_1,s_2)-X(t_1,s_2)}$).

We explain now that we have already lost some sharpness when we estimated \eqref{eq:kolmogorov_demo_bad_hoelder} using Hölder's inequality. Indeed, all the results \cite[Theorem 3]{Kon80}, \cite[(13.141)]{Tal14}, \cite[Theorem B.2.4]{Tal14}, \cite[Corollary 1]{Bed07} are based on finding an increasing convex function $\varphi$ such that
\begin{equation}\label{eq:metric_entropy_condition}
\ex\varphi\left(\frac{\abs{X(s)-X(t)}}{d(s,t)}\right) \le 1 .
\end{equation}
Observe that we can take $\varphi(x) = x^{q_1}$ at best. To apply any of these results, the condition turns out to be $\frac{1}{\alpha_1}+\frac{q_2}{q_1\alpha_2} < 1$. In fact, \cite[Theorem 13.5.8]{Tal14} implies that we cannot expect anything better just from the assumption \eqref{eq:metric_entropy_condition}. More precisely, the theorem states that in general, when we assume only  \eqref{eq:metric_entropy_condition}, in order to deduce any pathwise bounds for the process $X$, we need to have
\[ \int_0^\delta \varphi^{-1}\left(\frac{1}{\mu(B(t,\varepsilon))}\right) \, d\varepsilon < \infty , \]
with $B$ denoting the ball with respect to the metric $d$, and $\mu$ e.g. the Lebesgue measure. In our setup this turns out to the condition $\frac{1}{\alpha_1}+\frac{q_2}{q_1\alpha_2} < 1$.

We will show in \cref{thm:kolmogorov} that by using the condition \eqref{eq:kolmogorov_demo} instead of \eqref{eq:metric_entropy_condition}, we can relax this condition to $\frac{1}{\alpha_1}+\frac{1}{\alpha_2} < 1$. In case $\frac{1}{\alpha_1}+\frac{1}{\alpha_2} < 1 < \frac{1}{\alpha_1}+\frac{q_2}{q_1\alpha_2}$, this is an improvement. We have not found this possibility in any of the existing references.

\medskip

We now turn to our version of the Garsia-Rodemich-Rumsey inequality that allows us to make use of different exponents $q_1 \neq q_2$. In addition to the scenario \eqref{eq:kolmogorov_demo}, we allow also the situation when e.g. $\abs{X(s_1,s_2)-X(t_1,s_2)} \le A_{11}+A_{12}$ with $\ex\abs{A_{1j}}^{q_{1j}} \le \abs{s_1-t_1}^{\alpha_{1j}}$ for some $q_{1j},\alpha_{1j}$, $j=1,2$, where possibliy $q_{11} \neq q_{12}$.

Let $(E,d)$ be a metric space. We can assume $E$ to be isometrically embedded in some larger Banach space (by the Kuratowski embedding). To ease the notation, we write $\abs{x-y} = d(x,y)$ both for the distance in $E$ and for the distance in $\RR$. For a Borel set $A$ we denote by $\abs{A}$ its Lebesgue measure and $\fint_A f = \frac{1}{\abs{A}} \int_A f$.

In what follows, let $I_1$ and $I_2$ be two (either open or closed) non-trivial intervals of $\mathbb{R}$. 

\begin{lemma}\label{thm:grr}
Let $G \in C(I_1\times I_2)$ be a continuous function, with values in a metric space $E$, such that
\begin{equation}\label{e:G_assump}
|G(x_1,x_2)-G(y_1,y_2)
| \le \sum^{J_1}_{j=1} |A_{1j}(x_1,y_1;x_2)| + \sum^{J_2}_{j=1} |A_{2j}(y_1;x_2,y_2)|    
\end{equation}
for all $(x_1,x_2), (y_1,y_2) \in I_1 \times I_2$, where $A_{1j}:I_1 \times I_1 \times I_2\to \mathbb{R}$, $1\leq j\leq J_1$, $A_{2j}: I_1 \times I_2 \times I_2\to \mathbb{R}$, $1\leq j\leq J_2$, are measurable functions.  Suppose that 
\begin{align}
\iiint_{I_1 \times I_1 \times I_2} \frac{|A_{1j}(u_1,v_1;u_2)|^{q_{1j}}}{|u_1-v_1|^{\beta_{1j}}} \, du_1 \, dv_1 \, du_2 &\le M_{1j}, \label{eq:grr_condA}\\
\iiint_{I_1 \times I_2 \times I_2} \frac{|A_{2j}(v_1;u_2,v_2)|^{q_{2j}}}{|u_2-v_2|^{\beta_{2j}}} \, dv_1 \, du_2 \, dv_2 &\le M_{2j} \label{eq:grr_condB}
\end{align}
for all $j$, where $q_{ij} \ge 1$, $\beta_i := \min_j \beta_{ij} > 2$, $i=1,2$, and $(\beta_1-2)(\beta_2-2)-1 > 0$. Fix any $a,b > 0$. Then
\begin{multline}\label{e:G_diff}
|G(x_1,x_2)-G(y_1,y_2)| \le C \sum_j M_{1j}^{1/q_{1j}} \, \left(|x_1-y_1|^{\gamma^{(1)}_{1j}}+|x_2-y_2|^{\gamma^{(2)}_{1j}}\right) \\
+ C \sum_j M_{2j}^{1/q_{2j}} \, \left(|x_1-y_1|^{\gamma^{(1)}_{2j}}+|x_2-y_2|^{\gamma^{(2)}_{2j}}\right)
\end{multline}
for all $(x_1,x_2), (y_1,y_2) \in I_1 \times I_2$, where $\gamma^{(1)}_{1j} = \dfrac{\beta_{1j}-2-b}{q_{1j}}$, $\gamma^{(2)}_{1j} = \dfrac{(\beta_{1j}-2)a-1}{q_{1j}}$, $\gamma^{(1)}_{2j} = \dfrac{(\beta_{2j}-2)b-1}{q_{2j}}$, $\gamma^{(2)}_{2j} = \dfrac{\beta_{2j}-2-a}{q_{2j}}$, and $C < \infty$ is a constant that depends on $(q_{ij}),(\beta_{ij}),a,b,|I_1|,|I_2|$.
\end{lemma}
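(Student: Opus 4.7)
The plan is to adapt the classical dyadic-chaining proof of the Garsia-Rodemich-Rumsey inequality to this anisotropic two-dimensional setting. Fix endpoints $(x_1,x_2), (y_1,y_2) \in I_1 \times I_2$ and set $h_i = |x_i-y_i|$. For each $n \ge 0$, I would construct nested rectangles $R_n^x \ni (x_1,x_2)$ and $R_n^y \ni (y_1,y_2)$, contained in $I_1 \times I_2$, with side lengths $h_1 \, 2^{-n}$ in the first coordinate and $h_2 \, 2^{-n}$ in the second (translating slightly near $\partial(I_1 \times I_2)$ if needed). The aim is to select intermediate points $p_n \in R_n^x$, $q_n \in R_n^y$ satisfying suitable averaging properties, so that by continuity $p_n \to (x_1,x_2)$ and $q_n \to (y_1,y_2)$, and then to telescope
\begin{equation*}
G(x_1,x_2) - G(y_1,y_2) = \sum_{n \ge 0}\bigl(G(p_{n+1}) - G(p_n)\bigr) + \bigl(G(p_0) - G(q_0)\bigr) + \sum_{n \ge 0}\bigl(G(q_n) - G(q_{n+1})\bigr).
\end{equation*}

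The good points are selected via Fubini plus Markov's inequality. Restricting the triple integral (\ref{eq:grr_condA}) to $(u_1, u_2, v_1) \in R_n^x \times \pi_1(R_{n+1}^x)$, with $\pi_1$ the first-coordinate projection, shows that the set of $(u_1,u_2) \in R_n^x$ for which
\begin{equation*}
\int_{\pi_1(R_{n+1}^x)} \frac{|A_{1j}(u_1,v_1;u_2)|^{q_{1j}}}{|u_1-v_1|^{\beta_{1j}}} \, dv_1 \le \frac{C \, M_{1j}}{|R_n^x|}
\end{equation*}
has measure at least $(1-1/C)\,|R_n^x|$, and analogously for every $A_{2j}$-slice. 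Choosing $C$ large enough (depending only on $J_1 + J_2$), the intersection of all these good sets has positive measure, and I pick $p_n$ there. A second application of Markov inside $R_{n+1}^x$ lets me select $p_{n+1}$ so that the pointwise values $|A_{1j}(p_{n,1},p_{n+1,1};p_{n,2})|$ and $|A_{2j}(p_{n+1,1};p_{n,2},p_{n+1,2})|$ are controlled by $(M_{ij}/|R_n^x|)^{1/q_{ij}}$ times powers of the side lengths absorbed from the singular kernels. The construction of $q_n$ is entirely symmetric.

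Applying the hypothesis (\ref{e:G_assump}) to consecutive chain points then gives, for each term, a pointwise estimate obtained by absorbing the singular kernels $|u_1-v_1|^{-\beta_{1j}}$ and $|u_2-v_2|^{-\beta_{2j}}$ against $|p_{n,1}-p_{n+1,1}| \lesssim h_1 2^{-n}$ and $|p_{n,2}-p_{n+1,2}| \lesssim h_2 2^{-n}$, and by distributing the factor $1/|R_n^x|$ between the two coordinate directions. The auxiliary parameters $a,b > 0$ appear exactly at this distribution step: they quantify the trade-off between gaining $|x_1-y_1|^{\gamma_{ij}^{(1)}}$ and $|x_2-y_2|^{\gamma_{ij}^{(2)}}$ regularity out of each $A_{ij}$. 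Summing the resulting geometric series over $n$ requires each $\gamma_{ij}^{(k)} > 0$, which corresponds to $a \in (1/(\beta_1-2), \beta_2-2)$ and $b \in (1/(\beta_2-2), \beta_1-2)$; these windows are nonempty precisely under the hypothesis $(\beta_1-2)(\beta_2-2) > 1$. The remaining middle term $|G(p_0) - G(q_0)|$ is estimated directly from (\ref{e:G_assump}) using the coarse-level good properties of $p_0$ and $q_0$, contributing a term of the same form (\ref{e:G_diff}).

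The main obstacle is the anisotropic bookkeeping in the third step: the $\beta_{ij}$-singular kernel couples the two coordinate directions, and only a careful choice of $(a,b)$ yields positive Hölder exponents in both. A secondary technical point is ensuring that the good point $p_n$ satisfies the averaging constraint for every $A_{1j}$ and $A_{2j}$ simultaneously; this is handled by the union bound hidden in taking the large constant $C$. The extension to higher dimension follows the same scheme by iterating coordinate-wise.
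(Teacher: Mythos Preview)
Your chaining scheme has a genuine gap: the choice of rectangles with side lengths $h_1\,2^{-n} \times h_2\,2^{-n}$ (isotropic ratio $2$ in both directions, initial sides $h_i=|x_i-y_i|$) does not reproduce the conclusion under the stated hypothesis. Following your Markov selection, the pointwise bound you obtain for the $A_{1j}$ contribution at level $n$ is, up to constants,
\[
\bigl(M_{1j}\,(h_1 2^{-n})^{\beta_{1j}} / (h_1^2 h_2\,2^{-3n})\bigr)^{1/q_{1j}}
= \bigl(M_{1j}\, h_1^{\beta_{1j}-2}\, h_2^{-1}\, 2^{-n(\beta_{1j}-3)}\bigr)^{1/q_{1j}},
\]
and symmetrically for $A_{2j}$. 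Two things go wrong. First, summability in $n$ requires $\beta_{1j}>3$ and $\beta_{2j}>3$, which is strictly stronger than $(\beta_1-2)(\beta_2-2)>1$; for instance $\beta_1=5,\ \beta_2=5/2$ satisfies the lemma's hypothesis but not yours. Second, even when the series converges, the factor $h_2^{-1/q_{1j}}$ (resp.\ $h_1^{-1/q_{2j}}$) has the wrong sign and blows up as $h_2\to 0$ with $h_1$ fixed. There is no ``distribution of $1/|R_n^x|$ between the two directions'' that repairs this: $h_1^{\beta_{1j}-2}h_2^{-1}$ is not dominated by any positive power of $h_1$ plus a positive power of $h_2$.

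What is missing are two coupled choices. One must take \emph{anisotropic} contraction rates $R_1,R_2>1$ (sides $R_i^{-n}d_i$), tuned so that $R_1^{\beta_1-2}R_2^{-1}>1$ and $R_2^{\beta_2-2}R_1^{-1}>1$; this is possible exactly when $(\beta_1-2)(\beta_2-2)>1$, e.g.\ $R_2=R_1^{(\beta_1-1)/(\beta_2-1)}$. Separately, the \emph{initial} rectangle must not have sides $h_1\times h_2$ but rather $d_1\times d_2$ with $d_1=h_1\vee h_2^{\,a}$ and $d_2=h_1^{\,b}\vee h_2$. This is where $a,b$ actually enter: the enlargement $d_i\ge h_{3-i}^{\,\cdot}$ is precisely what converts the bad factor $d_2^{-1}$ in $d_1^{\beta_{1j}-2}d_2^{-1}$ into the positive exponents $\gamma^{(1)}_{1j},\gamma^{(2)}_{1j}$ of the statement. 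Your good-point selection via Markov is fine as a substitute for the averaging-plus-H\"older step, but it must be run over this rectangle family, not the one you describe.
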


\begin{remark}
The statement is already true when $q_{ij}>0$ (not necessarily $\ge 1$) and can be shown by an argument similarly as in \cite[Theorem 2.1.3 and Exercise 2.4.1]{SV79}. We have decided to stick to $q_{ij} \ge 1$ since the proof is simpler here.
\end{remark}

\begin{proof}
Note that for any continuous function $G$ and a sequence $B_n$ of sets with $\diam(\{x\} \cup B_n) \to 0$ we have $G(x) = \lim_n \fint_{B_n} G$. (Recall that we can view $E$ as a subspace of some Banach space, so that the integral is well-defined.)

Let $(x_1,x_2), (y_1,y_2) \in I_1 \times I_2$. Using the above observation, we will approximate $G(x_1,x_2)$ and $G(y_1,y_2)$ by well-chosen sequences of sets.

We pick a sequence of rectangles $I^n_1 \times I^n_2 \subseteq I_1 \times I_2$, $n \ge 0$, with the following properties:
\begin{itemize}
    \item $(x_1,x_2), (y_1,y_2) \in I^0_1 \times I^0_2$.
    \item $(x_1,x_2) \in I^n_1 \times I^n_2$ for all $n$.
    \item $|I^n_i| = R_i^{-n} d_i$, $i=1,2$, with parameters \[ R_1,R_2 > 1,\quad d_1,d_2 > 0 \] chosen later.
\end{itemize}
In order for such a sequence of rectangles to exist, we must have \[\abs{x_i-y_i} \le d_i \le \abs{I_i}, \quad i=1,2, \] since we require $x_i,y_i \in I^0_i \subseteq I_i$. Conversely, this condition guarantees the existence of such a sequence. 

We will bound
\[ \abs*{G(x_1,x_2) - \fint\fint_{I^0_1 \times I^0_2} G} \le \sum_{n \in \NN} \abs*{\fint\fint_{I^n_1 \times I^n_2} G - \fint\fint_{I^{n-1}_1 \times I^{n-1}_2} G} . \]
The same argument applies also to $G(y_1,y_2)$ where we can pick the same initial rectangle $I^0_1 \times I^0_2$. Hence, this will give us a bound on $\abs{G(x_1,x_2)-G(y_1,y_2)}$.

By the assumption \eqref{e:G_assump} we have
\[ \begin{split}
&\abs*{\fint\fint_{I^n_1 \times I^n_2} G - \fint\fint_{I^{n-1}_1 \times I^{n-1}_2} G} \\
&\quad = \abs*{\fint\fint_{I^n_1 \times I^n_2}\fint\fint_{I^{n-1}_1 \times I^{n-1}_2} (G(u_1,u_2)-G(v_1,v_2)) \,du_1\,du_2\,dv_1\,dv_2}\\
&\quad \le \sum_j \fint_{I^n_1}\fint_{I^{n-1}_1}\fint_{I^n_2} \abs{A_{1j}(u_1,v_1;u_2)} + \sum_j\fint_{I^{n-1}_1}\fint_{I^n_2}\fint_{I^{n-1}_2}\abs{A_{2j}(v_1;u_2,v_2)} .
\end{split} \]

Recall that $\abs{I^n_i} = R_i^{-n}d_i$ and that $\abs{u_i-v_i} \le C R_i^{-n}d_i$ for any $u_i \in I^n_i$, $v_i \in I^{n-1}_i$. This and Hölder's inequality imply
\[ \begin{split}
&\fint_{I^n_1}\fint_{I^{n-1}_1}\fint_{I^n_2} \abs{A_{1j}(u_1,v_1;u_2)}\\
&\quad \le C (R_1^{-n}d_1)^{\beta_{1j}/q_{1j}} \fint_{I^n_1}\fint_{I^{n-1}_1}\fint_{I^n_2} \frac{\abs{A_{1j}(u_1,v_1;u_2)}}{\abs{u_1-v_1}^{\beta_{1j}/q_{1j}}}\\
&\quad \le C (R_1^{-n}d_1)^{\beta_{1j}/q_{1j}} \left( \fint_{I^n_1}\fint_{I^{n-1}_1}\fint_{I^n_2} \frac{\abs{A_{1j}(u_1,v_1;u_2)}^{q_{1j}}}{\abs{u_1-v_1}^{\beta_{1j}}} \right)^{1/q_{1j}}\\
&\quad \le C (R_1^{-n}d_1)^{\beta_{1j}/q_{1j}} \left( (R_1^{-n}d_1)^{-2} (R_2^{-n}d_2)^{-1} M_{1j} \right)^{1/q_{1j}} \\
&\quad = C \left( (R_1^{-n}d_1)^{\beta_{1j}-2} (R_2^{-n}d_2)^{-1} M_{1j} \right)^{1/q_{1j}} .
\end{split} \]
Similarly,
\[ \fint_{I^{n-1}_1}\fint_{I^n_2}\fint_{I^{n-1}_2}\abs{A_{2j}(v_1;u_2,v_2)} 
\le C \left( (R_2^{-n}d_2)^{\beta_{2j}-2} (R_1^{-n}d_1)^{-1} M_{2j} \right)^{1/q_{2j}} . \]

We want to sum the above expressions for all $n$, which is possible if and only if both $R_1^{\beta_{1j}-2} R_2^{-1} > 1$ and $R_2^{\beta_{2j}-2} R_1^{-1} > 1$. The best pick is $R_2 = R_1^{\frac{\beta_1-1}{\beta_2-1}}$ (the exact scale of $R_1$ does not matter), and the condition becomes $(\beta_1-2)(\beta_2-2)-1 > 0$ (assuming $\beta_1,\beta_2 > 2$). In that case, we finally get
\begin{multline} \label{eq:grr_summed}
\abs{G(x_1,x_2)-G(y_1,y_2)} \\
\le C \sum_j \left( d_1^{\beta_{1j}-2} d_2^{-1} M_{1j} \right)^{1/q_{1j}} + C \sum_j \left( d_2^{\beta_{2j}-2} d_1^{-1} M_{2j} \right)^{1/q_{2j}}
\end{multline}

It remains to pick $d_1,d_2 > 0$. Let $d_1 := |x_1-y_1| \vee |x_2-y_2|^a$, $d_2 := |x_1-y_1|^b \vee |x_2-y_2|$, and suppose for the moment that $d_1 \le |I_1|$, $d_2 \le |I_2|$. (The conditions $d_1 \ge |x_1 - y_1|$, $d_2 \ge |x_2 - y_2|$ are satisfied by our choice.). In this case the inequality \eqref{eq:grr_summed} becomes
\begin{equation} \begin{split}\label{eq:grr_final}
&\abs{G(x_1,x_2)-G(y_1,y_2)} \\
&\quad \le C \sum_j M_{1j}^{1/q_{1j}} \, \left(|x_1-y_1|^{\beta_{1j}-2-b}+|x_2-y_2|^{(\beta_{1j}-2)a-1}\right)^{1/q_{1j}}\\
&\qquad + C \sum_j M_{2j}^{1/q_{2j}} \, \left(|x_1-y_1|^{(\beta_{2j}-2)b-1}+|x_2-y_2|^{\beta_{2j}-2-a}\right)^{1/q_{2j}} .
\end{split} \end{equation}
This proves the claim in case $d_1 \le |I_1|$, $d_2 \le |I_2|$.

It remains to handle the case when $d_1 > |I_1|$ or $d_2 > |I_2|$. In that case we pick $\hat d_1 = d_1 \wedge |I_1|$ and $\hat d_2 = d_2 \wedge |I_2|$ instead of $d_1$ and $d_2$. The conditions $|x_1-y_1| \le \hat d_1 \le |I_1|$ and $|x_2-y_2| \le \hat d_2 \le |I_2|$ are now satisfied, and in \eqref{eq:grr_summed}, we instead have
\begin{align}\label{eq:dependence_intervals}
\begin{split}
    \hat d_1^{\beta_{1j}-2} \hat d_2^{-1} &\le \frac{d_2}{d_2 \wedge |I_2|} \ d_1^{\beta_{1j}-2} d_2^{-1} = \left( \frac{|x_1-y_1|^b}{|I_2|} \vee 1 \right) d_1^{\beta_{1j}-2} d_2^{-1},\\
    \hat d_1^{-1} \hat d_2^{\beta_{2j}-2} &\le \frac{d_1}{d_1 \wedge |I_1|} \ d_1^{-1} d_2^{\beta_{2j}-2} = \left( \frac{|x_2-y_2|^a}{|I_1|} \vee 1 \right) d_1^{-1} d_2^{\beta_{2j}-2},
\end{split}
\end{align}
i.e. the same result \eqref{eq:grr_final} holds with the additional constants $\left( \frac{|x_1-y_1|^b}{|I_2|} \vee 1 \right)$ and $\left( \frac{|x_2-y_2|^a}{|I_1|} \vee 1 \right)$ (which can be bounded by a constant depending on $a,b,|I_1|,|I_2|$ since $a,b \ge 0$).
\end{proof}

\begin{remark}\label{rm:grr_dependence_intervals}
The dependence of the multiplicative constant $C$ on $|I_1|$ and $|I_2|$ is specified in \eqref{eq:dependence_intervals}. This can be convenient when we want to apply the lemma to different domains.

A more accurate version is
\begin{align*}
    \hat d_1^{\beta_{1j}-2} \hat d_2^{-1} &= \left( \frac{d_1 \wedge |I_1|}{d_1} \right)^{\beta_{1j}-2} \frac{d_2}{d_2 \wedge |I_2|} \ d_1^{\beta_{1j}-2} d_2^{-1}\\
    &= \left( \frac{|I_1|}{|x_2-y_2|^a} \wedge 1 \right)^{\beta_{1j}-2} \left( \frac{|x_1-y_1|^b}{|I_2|} \vee 1 \right) d_1^{\beta_{1j}-2} d_2^{-1},\\
    \hat d_1^{-1} \hat d_2^{\beta_{2j}-2} &= \left( \frac{d_2 \wedge |I_2|}{d_2} \right)^{\beta_{2j}-2} \frac{d_1}{d_1 \wedge |I_1|} \ d_1^{-1} d_2^{\beta_{2j}-2}\\
    &= \left( \frac{|I_2|}{|x_1-y_1|^b} \wedge 1 \right)^{\beta_{2j}-2} \left( \frac{|x_2-y_2|^a}{|I_1|} \vee 1 \right) d_1^{-1} d_2^{\beta_{2j}-2}.
\end{align*}
\end{remark}

\begin{remark}
We could have added some more flexibility by allowing the exponents $(q_{ij}),(\beta_{ij})$ to vary with $u_1,u_2$, but again we will not need it for our result.
\end{remark}

\begin{remark}\label{rmk:grr_exponents}
We have a free choice of $a,b \ge 0$ which affects the Hölder exponents $\gamma^{(1)}_{ij}, \gamma^{(2)}_{ij}$. In general, it is not simple to spell out the optimal choice of $a,b$ and hence the optimal Hölder exponents. Usually we are interested in the overall exponents (i.e. $\min_{i,j} \gamma^{(1)}_{ij}$, $\min_{i,j} \gamma^{(2)}_{ij}$), and we can solve
\begin{align*}
\min_j \gamma^{(1)}_{1j} &= \min_j \gamma^{(1)}_{2j},\\
\min_j \gamma^{(2)}_{1j} &= \min_j \gamma^{(2)}_{2j}
\end{align*}
to find the optimal choice for $a,b$.

For instance, in case $\beta_{1j} = \beta_1$ and $\beta_{2j} = \beta_2$ for all $j$, the best choice is
\begin{align*}
a = \frac{q_1(\beta_2-2)+q_2}{q_2(\beta_1-2)+q_1},\quad b = \frac{q_2(\beta_1-2)+q_1}{q_1(\beta_2-2)+q_2},
\end{align*}
resulting in
\begin{align*}
\gamma^{(1)} = \frac{(\beta_1-2)(\beta_2-2)-1}{q_1(\beta_2-2)+q_2},\quad \gamma^{(2)} = \frac{(\beta_1-2)(\beta_2-2)-1}{q_2(\beta_1-2)+q_1}
\end{align*}
where $q_i = \max_j q_{ij}$.

In general, we could choose $a = \frac{\beta_2-1}{\beta_1-1}$, $b=\frac{\beta_1-1}{\beta_2-1}$, resulting in
\begin{alignat*}{2}
\gamma^{(1)}_{1j} &= \frac{(\beta_{1j}-2)(\beta_2-2)-1+\beta_{1j}-\beta_1}{q_{1j}(\beta_2-1)},\quad & \gamma^{(2)}_{1j} &= \frac{(\beta_{1j}-2)(\beta_2-2)-1+\beta_{1j}-\beta_1}{q_{1j}(\beta_1-1)},\\
\gamma^{(1)}_{2j} &= \frac{(\beta_1-2)(\beta_{2j}-2)-1+\beta_{2j}-\beta_2}{q_{2j}(\beta_2-1)},\quad & \gamma^{(2)}_{2j} &= \frac{(\beta_1-2)(\beta_{2j}-2)-1+\beta_{2j}-\beta_2}{q_{2j}(\beta_1-1)}.
\end{alignat*}
But this is not necessarily the optimal choice.
\end{remark}

\begin{remark}
Notice that the condition to apply the lemma does only depend on $(\beta_{ij})$, not $(q_{ij})$, but the resulting H\"older-exponents will.
\end{remark}

\begin{remark}
The proof straightforwardly generalises to higher dimensions.
\end{remark}


Using our version of the GRR lemma, we can show another version of the Kolmogorov continuity condition. Here we suppose $I_1$, $I_2$ are \textbf{bounded} intervals.

\begin{theorem}
\label{thm:kolmogorov}
Let $X$ be a random field on $I_1 \times I_2$ taking values in a separable Banach space. Suppose that, for $(x_1,x_2), (y_1,y_2) \in I_1 \times I_2$, we have
\begin{equation}\label{eq:kolmogorov_Xdiff_assumption}
|X(x_1,x_2)-X(y_1,y_2)| \le \sum_{j=1}^{J_1} |A_{1j}(x_1,y_1;x_2)| + \sum_{j=1}^{J_2} |A_{2j}(y_1;x_2,y_2)|
\end{equation}
with measurable real-valued $A_{ij}$ that satisfy
\begin{equation}\label{eq:kolmogorov_moment_assumption}
\begin{split}
\ex |A_{1j}(x_1,y_1;x_2)|^{q_{1j}} &\le C' \, |x_1-y_1|^{\alpha_{1j}},\\
\ex |A_{2j}(y_1;x_2,y_2)|^{q_{2j}} &\le C' \, |x_2-y_2|^{\alpha_{2j}}
\end{split}
\end{equation}
with a constant $C' < \infty$.

Moreover, suppose $q_{ij} \ge 1$, $\alpha_i = \min_j \alpha_{ij} > 1$, $i=1,2$, and $\alpha_1^{-1}+\alpha_2^{-1} < 1$.

Then $X$ has a H\"older-continuous modification $\hat X$. Moreover, for any
\begin{align*}
\gamma^{(1)} < \frac{(\alpha_1-1)(\alpha_2-1)-1}{q_1(\alpha_2-1)+q_2},\quad \gamma^{(2)} < \frac{(\alpha_1-1)(\alpha_2-1)-1}{q_2(\alpha_1-1)+q_1},
\end{align*}
where $q_i = \max_j q_{ij}$, there is a random variable $C$ such that
\begin{align*}
|\hat X(x_1,x_2)-\hat X(y_1,y_2)| \le C \left(|x_1-y_1|^{\gamma^{(1)}}+|x_2-y_2|^{\gamma^{(2)}}\right)
\end{align*}
and $\ex[C^{q_{\text{min}}}] < \infty$ for $q_{\text{min}}=\min_{i,j} q_{ij}$.
\end{theorem}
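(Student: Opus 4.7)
The plan is to apply Lemma~\ref{thm:grr} to $X$ with a carefully chosen set of exponents $\beta_{ij}$, and then optimize the free parameters $a,b$ to extract the stated Hölder exponents.

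First, for a small $\epsilon>0$ to be fixed below, I would set $\beta_{ij}:=\alpha_{ij}+1-\epsilon$. Fubini together with the moment bound \eqref{eq:kolmogorov_moment_assumption} gives
\[
\ex\iiint_{I_1\times I_1\times I_2}\frac{|A_{1j}(u_1,v_1;u_2)|^{q_{1j}}}{|u_1-v_1|^{\beta_{1j}}}\,du_1\,dv_1\,du_2 \le C'\iiint |u_1-v_1|^{-1+\epsilon}\,du_1\,dv_1\,du_2<\infty,
\]
since $I_1,I_2$ are bounded, and analogously for $A_{2j}$. Hence the random integrals $M_{ij}(\omega)$ in \eqref{eq:grr_condA}--\eqref{eq:grr_condB} are a.s.\ finite and $\ex M_{ij}<\infty$. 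The hypotheses $\alpha_i>1$ and $\alpha_1^{-1}+\alpha_2^{-1}<1$ (equivalently $(\alpha_1-1)(\alpha_2-1)>1$) ensure that for $\epsilon$ small enough one has $\beta_i=\min_j\beta_{ij}>2$ and $(\beta_1-2)(\beta_2-2)>1$, as required for Lemma~\ref{thm:grr}.

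A standard technicality is that Lemma~\ref{thm:grr} is stated for continuous $G$ whereas $X$ is a priori only measurable; I would handle this by restricting $X$ to a countable dense subset $D\subset I_1\times I_2$ (for which the nested-rectangle averaging in the proof of Lemma~\ref{thm:grr} still applies), obtaining a uniform Hölder bound on $D$, and then defining $\hat X$ as the continuous extension to $I_1\times I_2$. That $\hat X$ is a modification of $X$ is immediate from $X(y)\to X(x)$ in $L^{q_{ij}}$ as $y\to x$, which follows directly from \eqref{eq:kolmogorov_Xdiff_assumption}--\eqref{eq:kolmogorov_moment_assumption}.

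Finally I would optimize the free parameters: following Remark~\ref{rmk:grr_exponents}, set $b=\frac{q_2(\alpha_1-1)+q_1}{q_1(\alpha_2-1)+q_2}$ and $a=b^{-1}$, where $q_i=\max_j q_{ij}$. Because $\beta_{ij}\ge\beta_i$, $q_{ij}\le q_i$, and because $\beta_1-2-b>0$ and $(\beta_2-2)b-1>0$ follow from the assumption $(\alpha_1-1)(\alpha_2-1)>1$ (the algebra is routine), each individual GRR exponent satisfies
\[
\gamma^{(1)}_{1j}\ge\tfrac{\beta_1-2-b}{q_1},\qquad \gamma^{(1)}_{2j}\ge\tfrac{(\beta_2-2)b-1}{q_2},
\]
and a direct computation shows that both lower bounds equal $\frac{(\beta_1-2)(\beta_2-2)-1}{q_1(\beta_2-2)+q_2}$, which tends to the claimed $\gamma^{(1)}$ as $\epsilon\to 0$; $\gamma^{(2)}$ is treated symmetrically. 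The random constant delivered by Lemma~\ref{thm:grr} satisfies $C\lesssim\sum_{i,j}M_{ij}^{1/q_{ij}}$, and $q_{\min}/q_{ij}\le 1$ lets Jensen conclude $\ex C^{q_{\min}}\lesssim\sum(\ex M_{ij})^{q_{\min}/q_{ij}}<\infty$. The main effort is keeping straight the interplay of $(\beta_{ij},q_{ij})$ across $j$---this bookkeeping, not any deep analytic step, is the real content of the argument once Lemma~\ref{thm:grr} is in hand.
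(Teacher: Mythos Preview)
Your overall strategy---choose $\beta_{ij}=\alpha_{ij}+1-\epsilon$, verify $\ex M_{ij}<\infty$ via Fubini, check the exponent conditions, and optimise $a,b$ as in Remark~\ref{rmk:grr_exponents}---is exactly what the paper does in Part~1 of its proof, and your exponent bookkeeping is correct.

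The gap is in how you pass from a merely measurable $X$ to a continuous one. You write that on a countable dense set $D$ ``the nested-rectangle averaging in the proof of Lemma~\ref{thm:grr} still applies''. It does not: that proof uses Lebesgue averages $\fint_{I^n_1\times I^n_2}G$ over genuine intervals, together with the fact that for \emph{continuous} $G$ one has $G(x)=\lim_n\fint_{B_n}G$. On a countable set there is no Lebesgue averaging, and without continuity the identification of $G(x)$ with the limit of averages fails. So as written you have neither a valid application of Lemma~\ref{thm:grr} nor a substitute argument (a dyadic chaining proof would work, but that is a different lemma, not the one at hand).

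The paper addresses this point with a non-trivial device borrowed from Talagrand: it conditions $X$ on an increasing sequence of \emph{finite} $\sigma$-algebras $\mathcal G_n$ to produce genuinely continuous approximations $X^{(n)}=\ex[X\mid\mathcal G_n]$, applies Lemma~\ref{thm:grr} to each $X^{(n)}$ (the $A_{ij}^{(n)}$ inherit the moment bounds by Jensen), and then passes to the limit on $D$ via martingale convergence and Fatou. This is the substantive content of Part~2 of the paper's proof, and it is precisely what your sketch is missing.
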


\begin{remark}
In case $\alpha_{1j} = \alpha_1$ and $\alpha_{2j} = \alpha_2$ for all $j$, the expressions for the Hölder exponents $\gamma^{(1)}, \gamma^{(2)}$ given above are sharp. In the general case, the exponents may be improved, following an
optimisation described in \cref{rmk:grr_exponents}.
\end{remark}

\begin{remark}
The constants $C'$ can be replaced by (deterministic) functions that are integrable in $(x_1, x_2)$, without change of the proof. But one would need to formulate the condition more carefully, therefore we decided to not include it.
\end{remark}

We point out that in case $J_1=J_2=1$ and $q_1=q_2$, this agrees with the two-dimensional version of the (inhomogeneous) Kolmogorov criterion \cite[Theorem 1.4.1]{Kun90}.

\begin{proof}\textbf{Part 1.} Suppose first that $X$ is already continuous. In that case we can directly apply \cref{thm:grr}. The expectation of the integrals \eqref{eq:grr_condA} and \eqref{eq:grr_condB} are finite if $\beta_{ij} < \alpha_{ij}+1$ for all $i,j$. By choosing $\beta_{ij}$ as large as possible, the conditions $(\beta_1-2)(\beta_2-2)-1 > 0$ and $\beta_1 > 2$, $\beta_2 > 2$ are satisfied if $\alpha_1^{-1}+\alpha_2^{-1} < 1$ and $\alpha_1 > 1$, $\alpha_2 > 1$. 

Since the (random) constants $M_{ij}$ in \cref{thm:grr} are almost surely finite, $X$ is H\"older continuous as quantified in \eqref{e:G_diff}, and the H\"older constants $M_{ij}^{1/q_{ij}}$ have $q_{ij}$-th moments since they are just the integrals \eqref{eq:grr_condA}. The formulas for the H\"older exponents follow from the analysis in \cref{rmk:grr_exponents}.

\textbf{Part 2.} Now, suppose $X$ is arbitrary. We need to construct a continuous version of $X$. It suffices to show that $X$ is uniformly continuous on a dense set $D \subseteq I_1 \times I_2$. Indeed, we can then apply Doob's separability theorem to obtain a separable (and hence continuous) version of $X$, or alternatively construct $\hat X$ by setting $\hat X = X$ on $D$ and extend $\hat X$ continuously to $I_1 \times I_2$. Then $\hat X$ is a modification of $X$ because they agree on a dense set $D$ and are both stochastically continuous (as follows from \eqref{eq:kolmogorov_Xdiff_assumption} and \eqref{eq:kolmogorov_moment_assumption}).

We use a standard argument that can be found e.g. in \cite[p. 8--9]{Tal90}.

We can assume without loss of generality that $X(\bar x_1,\bar x_2) = 0$ for some $(\bar x_1,\bar x_2) \in I_1 \times I_2$ (otherwise just consider $Y(x_1,x_2) = X(x_1,x_2)-X(\bar x_1,\bar x_2)$).

In particular, the conditions \eqref{eq:kolmogorov_Xdiff_assumption} and \eqref{eq:kolmogorov_moment_assumption} imply that $X(x_1,x_2)$ is an integrable random variable with values in a separable Banach space for every $(x_1,x_2)$.

Fix any countable dense subset $D \subseteq I_1 \times I_2$. Let
\[ \mathcal G \defeq \sigma( \{X(x_1,x_2) \mid (x_1,x_2) \in D\} ) . \]
We can pick an increasing sequence of \textbf{finite} $\sigma$-algebras $\mathcal G_n$ such that $\mathcal G = \sigma\left(\bigcup_n \mathcal G_n\right)$. By martingale convergence, we have
\[ X^{(n)}(x_1,x_2) \to X(x_1,x_2) \]
almost surely for $(x_1,x_2) \in D$ where $X^{(n)}(x_1,x_2) \defeq \ex[X(x_1,x_2) \mid \mathcal G_n]$.

Moreover, \eqref{eq:kolmogorov_Xdiff_assumption} implies
\[ |X^{(n)}(x_1,x_2)-X^{(n)}(y_1,y_2)| \le \sum_{j=1}^{J_1} |A_{1j}^{(n)}(x_1,y_1;x_2)| + \sum_{j=1}^{J_2} |A_{2j}^{(n)}(y_1;x_2,y_2)| \]
where $\abs{A_{ij}^{(n)}(...)} \defeq \ex[\abs{A_{ij}^{(n)}(...)} \mid \mathcal G_n]$. By Jensen's inequality and \eqref{eq:kolmogorov_moment_assumption}, we have
\[ \begin{split}
\ex |A_{1j}^{(n)}(x_1,y_1;x_2)|^{q_{1j}} &\le \ex |A_{1j}(x_1,y_1;x_2)|^{q_{1j}} \le C' \, |x_1-y_1|^{\alpha_{1j}},\\
\ex |A_{2j}^{(n)}(y_1;x_2,y_2)|^{q_{2j}} &\le \ex |A_{2j}(y_1;x_2,y_2)|^{q_{2j}} \le C' \, |x_2-y_2|^{\alpha_{2j}}.
\end{split} \]

In particular, $X^{(n)}$ is stochastically continuous, and since $\mathcal G_n$ is finite, $X^{(n)}$ is almost surely continuous. Applying \cref{thm:grr} yields
\begin{multline*}
|X^{(n)}(x_1,x_2)-X^{(n)}(y_1,y_2)| \le C \sum_j (M^{(n)}_{1j})^{1/q_{1j}} \, \left(|x_1-y_1|^{\gamma^{(1)}_{1j}}+|x_2-y_2|^{\gamma^{(2)}_{1j}}\right) \\
+ C \sum_j (M^{(n)}_{2j})^{1/q_{2j}} \, \left(|x_1-y_1|^{\gamma^{(1)}_{2j}}+|x_2-y_2|^{\gamma^{(2)}_{2j}}\right)
\end{multline*}
where $M^{(n)}_{ij}$ are defined as the integrals \eqref{eq:grr_condA} and \eqref{eq:grr_condB} with $A_{ij}^{(n)}$.

It follows that on $D$ we have
\begin{multline*}
|X(x_1,x_2)-X(y_1,y_2)| \le C \sum_j \tilde M_{1j}^{1/q_{1j}} \, \left(|x_1-y_1|^{\gamma^{(1)}_{1j}}+|x_2-y_2|^{\gamma^{(2)}_{1j}}\right) \\
+ C \sum_j \tilde M_{2j}^{1/q_{2j}} \, \left(|x_1-y_1|^{\gamma^{(1)}_{2j}}+|x_2-y_2|^{\gamma^{(2)}_{2j}}\right)
\end{multline*}
where $\tilde M_{ij} \defeq \liminf_n M^{(n)}_{ij}$. By Fatou's lemma,
\[ \ex \tilde M_{ij} \le \liminf_n \ex M^{(n)}_{ij} < \infty , \]
implying that $\tilde M_{ij} < \infty$, hence $X$ is uniformly continuous on $D$.
\end{proof}

One-dimensional variants of \cref{thm:grr} and \cref{thm:kolmogorov} can also be derived. Having shown the two-dimensional results \cref{thm:grr} and \cref{thm:kolmogorov}, there is no need for an additional proof of their one-dimensional variants, since we can extend any one-parameter function $G$ to a two-parameter function via $\tilde G(x_1,x_2) := G(x_1)$. This immediately implies the following results.

\begin{corollary}\label{thm:grr_1d}
Let $G$ be a continuous function on an interval $I$ such that
\begin{equation*}
    |G(x)-G(y)| \le \sum_{j=1}^J |A_j(x,y)|
\end{equation*}
for all $x,y \in I$, where $A_j: I \times I \to \RR$, $j=1,...,J$, are measurable functions that satisfy
\begin{equation*}
    \iint_{I \times I} \frac{|A_j(u,v)|^{q_j}}{|u-v|^{\beta_j}} \, du \, dv \le M_j
\end{equation*}
with some $q_j \ge 1$, $\beta_j > 2$. Then 
\begin{equation*}
    |G(x)-G(y)| \le C \sum_j M_j^{1/q_j} |x-y|^{\gamma_j}
\end{equation*}
for all $x,y \in I$, where $\gamma_j = \frac{\beta_j-2}{q_j}$, and $C < \infty$ is a constant that depends on $(q_j),(\beta_j)$.
\end{corollary}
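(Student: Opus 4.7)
The plan is to carry out the reduction foreshadowed in the paragraph preceding the statement: extend $G$ to a two-parameter function and then invoke \cref{thm:grr}. Fix a bounded auxiliary interval $K \subset \RR$, say $K = [0,1]$, and define $\tilde G : I \times K \to E$ by $\tilde G(x_1,x_2) := G(x_1)$. Then $\tilde G$ inherits continuity from $G$, and the hypothesis \eqref{e:G_assump} is satisfied by taking $A_{1j}(x_1,y_1;x_2) := A_j(x_1,y_1)$ for $j=1,\ldots,J$, together with a degenerate second batch $A_{21} \equiv 0$ (i.e., $J_2 := 1$ with a vanishing term).

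Because $A_{1j}$ does not depend on $x_2$, the integral in \eqref{eq:grr_condA} factors as $|K|\iint |A_j(u_1,v_1)|^{q_j}/|u_1-v_1|^{\beta_j}\,du_1\,dv_1 \le |K|M_j$, so one may take $M_{1j} := |K|M_j$; \eqref{eq:grr_condB} holds trivially with $M_{21} = 0$. The constraint $\beta_{1j} = \beta_j > 2$ transfers directly, while $\beta_{21}$ may be chosen arbitrarily large (the integrand vanishes), so the structural condition $(\beta_1-2)(\beta_{21}-2) - 1 > 0$ is automatic. Applying \cref{thm:grr} and specializing to $x_2 = y_2$ removes every $|x_2-y_2|^\bullet$ contribution and leaves a Hölder bound on $G$ of the desired shape.

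The main (mild) obstacle is matching Hölder exponents precisely: the auxiliary parameter $b > 0$ hard-coded into the proof of \cref{thm:grr} via the choice $d_2 = |x_1-y_1|^b \vee |x_2-y_2|$ produces the exponent $(\beta_j - 2 - b)/q_j$, strictly short of the sharp $\gamma_j = (\beta_j-2)/q_j$ claimed in the corollary. To recover $\gamma_j$ exactly, we revisit the intermediate bound \eqref{eq:grr_summed}, which is valid for any admissible $d_1 \ge |x_1-y_1|$, $d_2 \ge |x_2-y_2|$ with $d_i \le |I_i|$ (and any $R_1, R_2 > 1$ satisfying the summability constraints; in the degenerate case only $R_1^{\beta_{1j}-2} R_2^{-1} > 1$ is needed). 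Since $M_{21} = 0$ kills the second sum, we may freely set $d_2 := |K|$ and $d_1 := |x_1-y_1|$, obtaining directly
\[
|G(x_1)-G(y_1)| \le C \sum_j M_j^{1/q_j} |x_1-y_1|^{(\beta_j-2)/q_j},
\]
with the $|K|$-dependence absorbed into $C$. Equivalently, one may simply rerun the dyadic averaging argument of \cref{thm:grr} in a single variable, which is the classical one-dimensional GRR proof and yields the same conclusion.
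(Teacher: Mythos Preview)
Your proof is correct and follows exactly the paper's suggested route of extending $G$ trivially in a dummy second variable and appealing to \cref{thm:grr}. Your observation that the literal conclusion of \cref{thm:grr} only gives the exponent $(\beta_j-2-b)/q_j$ for some $b>0$, together with your clean fix of returning to the intermediate bound \eqref{eq:grr_summed} with $d_1=|x_1-y_1|$, $d_2=|K|$ (where the $M_{2j}$-sum vanishes and the $|K|$-factors cancel), is a genuine sharpening of the paper's terse ``immediately implies''.
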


For the sake of completeness we also state the one-dimensional version of Theorem \ref{thm:kolmogorov}.

\begin{corollary}
Let $X$ be a stochastic process on a bounded interval $I$ such that
\begin{equation*}
    |X(x)-X(y)| \le \sum_{j=1}^J |A_j(x,y)|
\end{equation*}
for all $x,y \in I$, where $A_j$, $j=1,...,J$, are measurable and satisfy
\begin{equation*}
    \ex |A_j(x,y)|^{q_j} \le C' |x-y|^{\alpha_j}
\end{equation*}
with $q_j \ge 1$, $\alpha_j > 1$, and $C' < \infty$.

Then $X$ has a continuous modification $\hat X$ that satisfies, for any $\gamma < \min_j \frac{\alpha_j-1}{q_j}$,
\begin{equation*}
    |\hat X(x)-\hat X(y)| \le C_\gamma |x-y|^{\gamma}
\end{equation*}
with a random variable $C_\gamma$ with $\ex[C_\gamma^{q_{\text{min}}}] < \infty$ where $q_{\text{min}} = \min_j q_j$.
\end{corollary}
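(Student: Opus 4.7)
The introductory sentence to this corollary already signals the strategy: embed the one-dimensional setup into the two-dimensional framework of \cref{thm:kolmogorov}. Concretely, I would set $\tilde X(x_1,x_2) := X(x_1)$ on $I \times [0,1]$. The hypothesis on $X$ immediately gives
\[ |\tilde X(x_1,x_2)-\tilde X(y_1,y_2)| \le \sum_{j=1}^J |A_j(x_1,y_1)|, \]
which fits the template of \cref{thm:kolmogorov} with $A_{1j}(x_1,y_1;x_2) := A_j(x_1,y_1)$ (so $\alpha_{1j}=\alpha_j$, $q_{1j}=q_j$) and a trivial second-coordinate block, $J_2=1$ and $A_{2,1}\equiv 0$. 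For the trivial block the moment bound holds vacuously for any $\alpha_{2,1}$ and any $q_{2,1}\ge 1$. Choosing $\alpha_{2,1}$ arbitrarily large forces $\alpha_2 \to \infty$, so the admissibility condition $\alpha_1^{-1}+\alpha_2^{-1}<1$ of \cref{thm:kolmogorov} collapses to the given hypothesis $\alpha_1=\min_j\alpha_j>1$. \cref{thm:kolmogorov} then produces a continuous modification $\hat{\tilde X}$ of $\tilde X$, and $\hat X(x) := \hat{\tilde X}(x,0)$ is a continuous modification of $X$.

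The subtle point is the H\"older exponent. The bound stated in the body of \cref{thm:kolmogorov} uses the aggregate $q_1=\max_j q_j$ and would only give $\gamma<(\min_j\alpha_j-1)/\max_j q_j$, which is generally strictly smaller than the claimed $\min_j(\alpha_j-1)/q_j$. To recover the sharp per-$j$ exponent I would revert to the formulas of \cref{rmk:grr_exponents}: with $\beta_{1j}<\alpha_j+1$ and the choice $a=(\beta_2-1)/(\beta_1-1)$, $b=(\beta_1-1)/(\beta_2-1)$,
\[ \gamma^{(1)}_{1j} = \frac{(\beta_{1j}-2)(\beta_2-2)-1+\beta_{1j}-\beta_1}{q_j(\beta_2-1)}, \]
which converges to $(\beta_{1j}-2)/q_j$ as $\beta_2 = \beta_{2,1} \to \infty$, and then to $(\alpha_j-1)/q_j$ as $\beta_{1j} \to \alpha_j+1$. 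Specialising the two-parameter H\"older estimate of \cref{thm:grr} to $x_2=y_2=0$ yields
\[ |\hat X(x)-\hat X(y)| \le C\sum_j M_{1j}^{1/q_j}|x-y|^{\gamma^{(1)}_{1j}}. \]
Finally, $\ex M_{1j} \lesssim \iint_{I\times I}|u-v|^{\alpha_j-\beta_{1j}}\,du\,dv<\infty$ on the bounded interval $I$ (since $\beta_{1j}<\alpha_j+1$), giving a random constant with finite $q_j$-th, hence $q_{\min}$-th, moment.

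The only genuinely nontrivial step is this last refinement of the exponent, needed because the aggregated exponent stated in the body of \cref{thm:kolmogorov} is not sharp when the pairs $(\alpha_j,q_j)$ vary with $j$; everything else is a direct specialisation of the two-dimensional result.
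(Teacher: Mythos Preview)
Your proposal is correct and follows the same embedding strategy the paper sketches (extend $X$ trivially in a dummy second variable and invoke the two-dimensional results). You have also correctly spotted and resolved a point the paper leaves implicit: the aggregated exponent in the displayed conclusion of \cref{thm:kolmogorov} would only give $\gamma < (\min_j\alpha_j-1)/\max_j q_j$, so one must return to the per-$j$ formulas of \cref{rmk:grr_exponents} (equivalently, to \cref{thm:grr_1d}) and send $\beta_2\to\infty$ to recover $\gamma^{(1)}_{1j}\to(\beta_{1j}-2)/q_j$, noting that the $M_{2,1}$-term vanishes because $A_{2,1}\equiv 0$.
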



\subsection{Further variations on the GRR theme}

We give some additional results that are similar or come as consequence of \cref{thm:grr}. This demonstrates the flexibility and generality that our lemma provides. We do not aim for a complete survey of all implications of the lemma.

We begin by proving the result of \cref{thm:grr} under slightly weaker assumptions. The assumptions may seem a bit at random, but they will turn out to be what we need in the proof of \cref{thm:fv_hoelder_convergence}.

\begin{lemma}\label{le:grr_relaxed}
Consider the same conditions as in \cref{thm:grr}, but instead of \eqref{e:G_assump}, we assume the following weaker condition. Let $r_j > 1$ and $\theta_j > 0$ such that $\frac{\beta_{1j}-2}{q_{1j}} < \theta_j$ for $j=1,...,J_1$.\footnote{A slightly different result still holds if $\frac{\beta_{1j}-2}{q_{1j}} \ge \theta_j$, as one can see in the proof.}
Suppose that for some small $c > 0$, e.g. $c \le \abs{I_1}/4$, we have
\begin{equation} \begin{split}\label{e:G_assump_weaker}
&\abs{G(x_1,x_2)-G(y_1,y_2)} \\
&\quad \le \sum^{J_1}_{j=1} \sum_{k=0}^{\lfloor \log_{r_j}(c/\abs{x_1-y_1}) \rfloor} r_j^{-k\theta_j} |A_{1j}(z_1+r_j^k(x_1-z_1),z_1+r_j^k(y_1-z_1);x_2)| \\
&\qquad + \sum^{J_2}_{j=1} |A_{2j}(y_1;x_2,y_2)|   
\end{split} \end{equation}
for $(x_1,x_2),(y_1,y_2) \in I_1 \times I_2$ and $z_1 \in I_1$ whenever $\abs{x_1-z_1} \vee \abs{y_1-z_1} \le 2\abs{x_1-y_1}$ and all the points appearing in the sum are also in the domain $I_1$.

Then the result of \cref{thm:grr} still holds, with the constant $C$ depending also on $(r_j),(\theta_j)$.
\end{lemma}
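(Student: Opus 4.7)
The plan is to follow the proof of \cref{thm:grr} essentially verbatim, using the same chain of rectangles $I_1^n \times I_2^n$ converging to $(x_1, x_2)$, the same mirror chain for $(y_1, y_2)$, and the same telescoping decomposition of $\abs{G(x_1,x_2) - G(y_1,y_2)}$. The $A_{2j}$ sum in \eqref{e:G_assump_weaker} is unchanged, so its contribution is bounded exactly as in \cref{thm:grr}; all the new work is in the $A_{1j}$ contribution, which now comes with an extra sum over $k$.

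For the $A_{1j}$ part, after the standard split $G(u_1,u_2) - G(v_1,v_2) = [G(u_1,u_2) - G(v_1,u_2)] + [G(v_1,u_2) - G(v_1,v_2)]$ and averaging over $(u_1,u_2) \in I_1^n \times I_2^n$ and $(v_1,v_2) \in I_1^{n-1} \times I_2^{n-1}$, I would apply \eqref{e:G_assump_weaker} to the first bracket with the convenient choice $z_1 = u_1$ (trivially verifying the $z_1$-condition). Tonelli's theorem lets us pull the sum over $k$ outside, reducing the task to bounding
\[
r_j^{-k\theta_j} \fint_{I_1^n}\fint_{I_1^{n-1}}\fint_{I_2^n} \abs{A_{1j}(u_1, u_1 + r_j^k(v_1 - u_1); u_2)} \, \mathbb 1_{\{r_j^k \abs{u_1-v_1} \le c\}} \, du_1 \, dv_1 \, du_2
\]
for each $k \ge 0$. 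The change of variables $v_1 \mapsto \tilde v_1 = u_1 + r_j^k(v_1 - u_1)$ (Jacobian $r_j^k$ in $v_1$) turns this into an integral over $\tilde v_1 \in u_1 + r_j^k(I_1^{n-1} - u_1) \subset I_1$, with $\abs{u_1 - \tilde v_1} = r_j^k \abs{u_1 - v_1}$, weighted by an extra factor $r_j^{-k}$. Running the same Hölder/sup estimate with weight $\abs{u_1 - \tilde v_1}^{-\beta_{1j}/q_{1j}}$ as in the proof of \cref{thm:grr} then yields a bound of the form $r_j^{k\mu_j}$ times the scale-$n$ quantity $\big((R_1^{-n}d_1)^{\beta_{1j}-2}(R_2^{-n}d_2)^{-1} M_{1j}\big)^{1/q_{1j}}$ familiar from \cref{thm:grr}, with an explicit exponent $\mu_j = \mu_j(\beta_{1j}, q_{1j})$.

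The hard part will be carrying out the resulting double sum $\sum_n \sum_{k=0}^{K(u_1,v_1)}$ with $K = \lfloor \log_{r_j}(c/\abs{u_1-v_1}) \rfloor$ and matching the conclusion of \cref{thm:grr}. The truncation at $K$ is decisive: whenever the naive geometric sum $\sum_k r_j^{k(\mu_j - \theta_j)}$ fails to be absolutely summable, the indicator $r_j^k \abs{u_1-v_1} \le c$ restricts the effective integration region to $\{\abs{u_1 - v_1} \le c/r_j^k\}$, whose shrinking supplies precisely the missing decay. Splitting the range of $k$ at the critical value $r_j^k \asymp c/\abs{I_1^{n-1}}$ and using the hypothesis $\theta_j > (\beta_{1j}-2)/q_{1j}$, one checks that the $k$-sum at each scale $n$ is of the same order as the single term appearing in \cref{thm:grr}, so summing over $n$ goes through exactly as before. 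Everything else (the choice of $a,b > 0$, the case $d_i > \abs{I_i}$ leading to the bookkeeping of \eqref{eq:dependence_intervals}, and the extraction of the Hölder exponents $\gamma^{(i)}_{ij}$) is imported verbatim from the proof of \cref{thm:grr}, and the additional dependence of $C$ on $(r_j), (\theta_j)$ enters only through the geometric-series constants in the $k$-sum step.
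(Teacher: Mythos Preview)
Your outline follows the right overall architecture, but the choice $z_1 = u_1$ creates two genuine problems that your sketch does not resolve.

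First, the hypothesis \eqref{e:G_assump_weaker} requires not only $\abs{x_1-z_1}\vee\abs{y_1-z_1}\le 2\abs{x_1-y_1}$ but also that \emph{all} dilated points $z_1+r_j^k(x_1-z_1)$ and $z_1+r_j^k(y_1-z_1)$ lie in $I_1$. With $z_1=u_1$ the second argument becomes $u_1+r_j^k(v_1-u_1)$, which can land outside $I_1$ whenever $u_1$ is within distance $c$ of an endpoint of $I_1$ and $v_1-u_1$ points toward that endpoint. Since the rectangles $I_1^n$ of \cref{thm:grr} contain $x_1$, this obstruction is unavoidable if $x_1$ is near $\partial I_1$.

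Second, and more seriously, because $z_1=u_1$ depends on $u_1$, your change of variables dilates only $v_1$ and not $u_1$. Carrying your H\"older estimate through, the volume factor is $\abs{I_1^n}\cdot r_j^k\abs{I_1^{n-1}}\cdot\abs{I_2^n}$ rather than $(r_j^k)^2\abs{I_1^n}\,\abs{I_1^{n-1}}\,\abs{I_2^n}$, and the resulting exponent on $r_j^k$ is $(\beta_{1j}-1)/q_{1j}-\theta_j$, not $(\beta_{1j}-2)/q_{1j}-\theta_j$. Your truncation argument then produces, at scale $n$, a term of order $(R_1^{-n}d_1)^{\theta_j-1/q_{1j}}(R_2^{-n}d_2)^{-1/q_{1j}}M_{1j}^{1/q_{1j}}$ rather than the desired $(R_1^{-n}d_1)^{(\beta_{1j}-2)/q_{1j}}(R_2^{-n}d_2)^{-1/q_{1j}}M_{1j}^{1/q_{1j}}$. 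Under the sole hypothesis $\theta_j>(\beta_{1j}-2)/q_{1j}$ the former exponent can be strictly smaller, so neither the $n$-sum nor the final H\"older exponents match those of \cref{thm:grr}. (In the paper's application one has $\theta_j=1/2^{(-)}$ and $(\beta_{1j}-1)/q_{1j}\approx \tfrac12+\tfrac{\zeta}{2\lambda}>\tfrac12$, so this failure mode is exactly the relevant one.)

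The paper avoids both issues by a single device: it chooses $z_1$ \emph{independent} of $(u_1,v_1)$, namely $z_1=\inf(I_1^n\cup I_1^{n-1})$ or $\sup(I_1^n\cup I_1^{n-1})$ depending on which half of $I_1$ that point lies in. For this to satisfy $\abs{u_1-z_1}\vee\abs{v_1-z_1}\le 2\abs{u_1-v_1}$ uniformly over $u_1\in I_1^n$, $v_1\in I_1^{n-1}$, the rectangles must be \emph{separated} (not nested), so the paper modifies the chain to have $\abs{I_i^n}=\tfrac19 R_i^{-n}d_i$ with $\dist(I_i^n,I_i^{n+1})\ge\tfrac19 R_i^{-n}d_i$. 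With a fixed $z_1$ the map $\phi_k(w)=z_1+r_j^k(w-z_1)$ dilates \emph{both} integration variables, the averages transform to $\fint_{\phi_k(I_1^n)}\fint_{\phi_k(I_1^{n-1})}$, and one reads off the correct exponent $(\beta_{1j}-2)/q_{1j}-\theta_j<0$ directly, with no case-splitting on $k$ needed.
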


\begin{proof}
We proceed similarly as in the proof of \cref{thm:grr}. We pick the sequence $I^n_i$ a bit more carefully. Let $d_i > 0$, $R_i > 1$, $i=1,2$, be as in the proof of \cref{thm:grr}, and recall that we can freely pick $R_i \ge 9$. It is not hard to see that we can then pick a sequence of rectangles $I^n_1 \times I^n_2$ in such a way that
\begin{itemize}
\item $\abs{I^n_i} = \frac{1}{9} R_i^{-n}d_i$,
\item $\frac{1}{9} R_i^{-n}d_i \le \dist(I^n_i,I^{n+1}_i) \le R_i^{-n}d_i$,
\item $\dist(x_i,I^n_i) \to 0$ as $n \to \infty$,
\end{itemize}
and another analogous sequence of rectangles for $(y_1,y_2)$ that begins with the same $I^0_1 \times I^0_2$.

The proof proceeds in the same way, but instead of the assumption \eqref{e:G_assump}, we apply \eqref{e:G_assump_weaker} with some $z_1$ that we pick now.

Let $n \in \NN$. We pick $z_1 \defeq \inf(I^n_1 \cup I^{n-1}_1)$ if this point is in the left half of $I_1$, and $z_1 = \sup(I^n_1 \cup I^{n-1}_1)$ otherwise. From the defining properties of the sequence $(I^n_1)$ it follows that $\abs{u_1-z_1} \vee \abs{v_1-z_1} \le 2\abs{u_1-v_1}$ for all $u_1 \in I^n_1$, $v_1 \in I^{n-1}_1$. Moreover, all the points $z_1+r^k(u_1-z_1)$ and $z_1+r^k(v_1-z_1)$, $k \le \lfloor \log_r(c/\abs{x_1-y_1}) \rfloor$, are inside $I_1$ because $\abs{r^k(u_1-z_1)} \le \frac{c}{\abs{u_1-v_1}}\abs{u_1-z_1} \le 2c$ and we have chosen $z_1$ to be more than distance $\abs{I_1}/2 \ge 2c$ away (in the $u_1$ resp. $v_1$ direction) from the end of the interval $I_1$.

We now have to bound
\[ 
\sum_k \fint_{I^n_1}\fint_{I^{n-1}_1}\fint_{I^n_2} r^{-k\theta_j} \abs{A_{1j}(z_1+r^k(u_1-z_1),z_1+r^k(v_1-z_1);u_2)} \,du_2\,dv_1\,du_1
\]

With the transformation $\phi_k(u_1) = z_1+r^k(u_1-z_1)$ we get
\[ \begin{split}
&\fint_{I^n_1}\fint_{I^{n-1}_1}\fint_{I^n_2} r^{-k\theta_j} \abs{A_{1j}(z_1+r^k(u_1-z_1),z_1+r^k(v_1-z_1);u_2)} \\
&\quad = r^{-k\theta_j} \fint_{\phi_k(I^n_1)}\fint_{\phi_k(I^{n-1}_1)}\fint_{I^n_2} \abs{A_{1j}(u_1,v_1;u_2)} \\
&\quad \le C r^{-k\theta_j} (r^k R_1^{-n} d_1)^{\beta_{1j}/q_{1j}} \fint_{\phi_k(I^n_1)}\fint_{\phi_k(I^{n-1}_1)}\fint_{I^n_2} \frac{\abs{A_{1j}(u_1,v_1;u_2)}}{\abs{u_1-v_1}^{\beta_{1j}/q_{1j}}}\\
&\quad \le C r^{-k\theta_j} (r^k R_1^{-n} d_1)^{\beta_{1j}/q_{1j}} \left( \fint_{\phi_k(I^n_1)}\fint_{\phi_k(I^{n-1}_1)}\fint_{I^n_2} \frac{\abs{A_{1j}(u_1,v_1;u_2)}^{q_{1j}}}{\abs{u_1-v_1}^{\beta_{1j}}} \right)^{1/q_{1j}} \\
&\quad \le C r^{-k\theta_j} (r^k R_1^{-n} d_1)^{\beta_{1j}/q_{1j}} \left( (r^k R_1^{-n} d_1)^{-2} (R_2^{-n} d_2)^{-1} M_{1j} \right)^{1/q_{1j}} \\
&\quad = C r^{k((\beta_{1j}-2)/q_{1j}-\theta_j)} \left( (R_1^{-n} d_1)^{\beta_{1j}-2} (R_2^{-n} d_2)^{-1} M_{1j} \right)^{1/q_{1j}} .
\end{split} \]
Since we assumed $\frac{\beta_{1j}-2}{q_{1j}} < \theta_j$ this bound sums in $k$ to
\[ C \left( (R_1^{-n} d_1)^{\beta_{1j}-2} (R_2^{-n} d_2)^{-1} M_{1j} \right)^{1/q_{1j}} \]
which is the same bound as in the proof of \cref{thm:grr}. The rest of the proof is the same as in \cref{thm:grr}.
\end{proof}

The following corollary is only used for Theorem \ref{thm:sle_pvar_kappa}.

\begin{corollary}\label{thm:grr_pvar}
Consider the same conditions as in \cref{thm:grr}. For $x_1 \in I_1$, consider $G(x_1,\cdot)$ as an element in the space of continuous functions $C^0(I_2)$. Then the $p$-variation of $x_1 \mapsto G(x_1,\cdot)$ is at most
\begin{equation*}
    C \sum_j M_{1j}^{1/q_{1j}} |I_1|^{\gamma^{(1)}_{1j}} + C \sum_j M_{2j}^{1/q_{2j}} |I_1|^{\gamma^{(1)}_{2j}},
\end{equation*}
where $p = \max_{i,j} \frac{q_{ij}}{1+\gamma^{(1)}_{ij} q_{ij}} = \max_j \frac{q_{1j}}{\beta_{1j}-1-b} \vee \max_j \frac{q_{2j}}{(\beta_{2j}-2)b}$ (with a choice of $b \ge 0$), and $C$ does not depend on $|I_1|$.
\end{corollary}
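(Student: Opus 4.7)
The plan is to apply \cref{thm:grr} on each sub-rectangle $[s,t] \times I_2 \subset I_1 \times I_2$, where $[s,t]$ runs over a partition $\pi$ of $I_1$, and then aggregate the resulting local H\"older estimates via H\"older's inequality on the partition sum. Taking $x_1=s$, $y_1=t$, $x_2=y_2$ in \cref{thm:grr} (applied on the sub-rectangle) and passing to the supremum over $x_2\in I_2$ produces
\[
\|G(s,\cdot)-G(t,\cdot)\|_\infty \le C\sum_j \tilde M_{1j}(s,t)^{1/q_{1j}}|t-s|^{\gamma^{(1)}_{1j}} + C\sum_j \tilde M_{2j}(s,t)^{1/q_{2j}}|t-s|^{\gamma^{(1)}_{2j}},
\]
where $\tilde M_{ij}(s,t)$ stands for the integral \eqref{eq:grr_condA}, \eqref{eq:grr_condB} restricted to the corresponding sub-rectangle. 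The key observation is that the exponents $\gamma^{(1)}_{ij}$ returned by \cref{thm:grr} are intrinsic (independent of the interval sizes), while the dependence of the multiplicative constant on $|[s,t]|$ and $|I_2|$ is controlled via \cref{rm:grr_dependence_intervals}.

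Raising the previous display to the $p$-th power, using $(\sum_k a_k)^p \le (J_1+J_2)^{p-1}\sum_k a_k^p$, and summing over $[s,t]\in\pi$ reduces the problem to bounding, for each pair $(i,j)$,
\[
S_{ij} \defeq \sum_{[s,t]\in\pi}\tilde M_{ij}(s,t)^{p/q_{ij}}\,|t-s|^{p\gamma^{(1)}_{ij}}.
\]
When $p<q_{ij}$ I would apply H\"older's inequality with conjugate exponents $q_{ij}/p$ and $q_{ij}/(q_{ij}-p)$ to obtain
\[
S_{ij} \le \Bigl(\sum_{[s,t]}\tilde M_{ij}(s,t)\Bigr)^{p/q_{ij}}\Bigl(\sum_{[s,t]}|t-s|^{p\gamma^{(1)}_{ij} q_{ij}/(q_{ij}-p)}\Bigr)^{(q_{ij}-p)/q_{ij}}.
\]
The precise value $p=\max_{i,j}\frac{q_{ij}}{1+\gamma^{(1)}_{ij}q_{ij}}$ is exactly the threshold for which $p\gamma^{(1)}_{ij}q_{ij}/(q_{ij}-p) \ge 1$ holds for every $(i,j)$, permitting the elementary estimate $\sum_{[s,t]}|t-s|^{\alpha}\le |I_1|^{\alpha}$ when $\alpha\ge 1$. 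Disjointness of the sub-rectangles yields $\sum_{[s,t]}\tilde M_{ij}(s,t)\le M_{ij}$, and a short calculation shows that the exponent on $|I_1|$ reduces to $p\gamma^{(1)}_{ij}$. The residual case $p\ge q_{ij}$ is handled by the pointwise bound $\tilde M_{ij}(s,t)^{p/q_{ij}} \le M_{ij}^{p/q_{ij}-1}\tilde M_{ij}(s,t)$ together with $|t-s|^{p\gamma^{(1)}_{ij}} \le |I_1|^{p\gamma^{(1)}_{ij}-1}|t-s|$, which leads to the identical conclusion. Taking $p$-th roots and summing over $(i,j)$ gives the stated inequality.

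The main point requiring care is the dependence of the GRR constant on the size of each sub-rectangle, which determines whether the final constant is genuinely independent of $|I_1|$. As long as $|t-s|^b \le |I_2|$, \cref{rm:grr_dependence_intervals} shows that no extra factor appears and the constant in the local H\"older estimate is uniform. When $|t-s|^b > |I_2|$ an additional factor $(|t-s|^b/|I_2|)^{1/q_{ij}}$ is introduced; absorbing this factor into $|t-s|^{\gamma^{(1)}_{ij}}$ upgrades the effective H\"older exponent to $(\beta_{ij}-2)/q_{ij}$, which still comfortably satisfies the summation condition needed in the H\"older step, and the $|I_2|^{-1/q_{ij}}$ is absorbed into the constant (which may depend on $|I_2|$ but not on $|I_1|$). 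This bookkeeping, together with the clean cancellation in the exponents noted above, is the only delicate point; the rest of the argument is mechanical.
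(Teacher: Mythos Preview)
Your approach coincides with the paper's: apply \cref{thm:grr} on each sub-rectangle $[s,t]\times I_2$, use \cref{rm:grr_dependence_intervals} to control the constant's dependence on $|[s,t]|$, and then aggregate over partitions. The paper phrases the aggregation step slightly differently, observing (with a reference to \cite[Corollary~A.3]{FV10}) that
\[
\omega(s,t)=C^p\sum_j \bigl(M_{1j}\big|_{[s,t]}\bigr)^{p/q_{1j}}|t-s|^{p\gamma^{(1)}_{1j}}
+C^p\sum_j \bigl(M_{2j}\big|_{[s,t]}\bigr)^{p/q_{2j}}|t-s|^{p\gamma^{(1)}_{2j}}
\]
is a control, i.e.\ superadditive; the $p$-variation bound then follows immediately. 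Your explicit H\"older-inequality argument in the case $p<q_{ij}$ is precisely the standard proof of this superadditivity.

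There is one small glitch in your treatment of the residual case $p\ge q_{ij}$. The bound $|t-s|^{p\gamma^{(1)}_{ij}}\le |I_1|^{p\gamma^{(1)}_{ij}-1}|t-s|$ requires $p\gamma^{(1)}_{ij}\ge 1$, which is \emph{not} guaranteed when $p\ge q_{ij}$ (the defining inequality $p\ge q_{ij}/(1+\gamma^{(1)}_{ij}q_{ij})$ only gives $p\gamma^{(1)}_{ij}\ge (q_{ij}-p)/q_{ij}\le 0$ here). The fix is immediate: after bounding $\tilde M_{ij}(s,t)^{p/q_{ij}}\le M_{ij}^{p/q_{ij}-1}\tilde M_{ij}(s,t)$, note that $\tilde M_{ij}(s,t)\,|t-s|^{p\gamma^{(1)}_{ij}}$ is itself superadditive (product of controls with exponents $1+p\gamma^{(1)}_{ij}\ge 1$), hence sums over the partition to at most $M_{ij}\,|I_1|^{p\gamma^{(1)}_{ij}}$. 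This is exactly the control-function viewpoint the paper takes, and it handles both cases uniformly.
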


\begin{proof}
Let $t^0 < t^1 < ... < t^n$ be a partition of $I_1$. The $p$-variation of $x_1 \mapsto G(x_1,\cdot) \in C^0(I_2)$ is
\begin{equation*}
    \sup_{\text{partitions of }I_1} \left( \sum_k \sup_{x_2 \in I_2} |G(t^k,x_2)-G(t^{k-1},x_2)|^p \right)^{1/p}.
\end{equation*}
We estimate the differences using \cref{thm:grr}, applied to $[t^{k-1},t^k] \times I_2$. Observe that since consider the difference only in the first parameter of $G$, the constant $C$ in the statement of \cref{thm:grr} does not depend on the size of $[t^{k-1},t^k]$, as we explained in \cref{rm:grr_dependence_intervals}. Hence we have
\begin{multline*}
|G(t^k,x_2)-G(t^{k-1},x_2)| \le C \sum_j \left( M_{1j}\big|_{[t^{k-1},t^k]} \right)^{1/q_{1j}} |t^k-t^{k-1}|^{\gamma^{(1)}_{1j}}\\
+ C \sum_j \left( M_{2j}\big|_{[t^{k-1},t^k]} \right)^{1/q_{2j}} |t^k-t^{k-1}|^{\gamma^{(1)}_{2j}}
\end{multline*}
for all $x_2 \in I_2$, where we denote by $M_{1j}\big|_{[s,t]}$ and $M_{2j}\big|_{[s,t]}$ the integrals in \eqref{eq:grr_condA} and \eqref{eq:grr_condB} restricted to $[s,t] \times [s,t] \times I_2$ and $[s,t] \times I_2 \times I_2$, respectively.

Similary to \cite[Corollary A.3]{FV10}, we can show that
\begin{equation*}
    \omega(s,t) = C^p \sum_j \left( M_{1j}\big|_{[s,t]} \right)^{p/q_{1j}} |s-t|^{p\gamma^{(1)}_{1j}}+ C^p \sum_j \left( M_{2j}\big|_{[s,t]} \right)^{p/q_{2j}} |s-t|^{p\gamma^{(1)}_{2j}}
\end{equation*}
is a control.
\end{proof}


\section{Continuity of SLE in $\kappa$ and $t$}
\label{sec:sle_continuity}

In this section we show the main results \cref{thm:main,thm:main2}. We adopt notations and prerequisite from \cite{JVRW14}. For the convenience of the reader, we quickly recall some important notations.

Let $U\colon [0,1]\to \mathbb{R}$ be continuous. The Loewner differential equation is the following initial value ODE
\begin{equation}\label{eq:loewner}
    \partial_t g_t(z) = \frac{2}{g_t(z)-U(t)},\quad g_0(z) = z \in \mathbb{H}.
\end{equation}
For each $z\in \mathbb{H}$, the ODE has a unique solution up to a time $T_z=\sup\{t>0:|g_t(z)-U(t)|>0\} \in (0,\infty]$. For $t \ge 0$, let $H_t = \{z\in \mathbb{H}:T_z>t\}$. It is known that $g_t$ is a conformal map from $H_t$ onto $\mathbb{H}.$ Define $f_t = g_t^{-1}$ and $\hat{f}_t = f_t( \cdot + U(t))$. One says that $\lambda$ generates a curve $\gamma$ if 
\begin{equation}\label{eq:trace_limit}
    \gamma(t):=\lim_{y\to 0^+} f_t(iy+U(t))
\end{equation}
exists and is continuous in $t\in [0,1]$. This is equivalent to saying that there exists a continuous $\barH$-valued path $\gamma$ such that for each $t \in [0,1]$, the domain $H_t$ is the unbounded connected component of $\HH \setminus \gamma[0,t]$.

It is known (\cite{RS05,LSW04}) that for fixed $\kappa\in [0,\infty)$, the driving function $U=\sqrt{\kappa}B$, where $B$ is a standard Brownian motion, almost surely generates a curve, which we will denote by $\gamma(\cdot,\kappa)$ or $\gamma^\kappa$. But we do not know whether given a Brownian motion $B$, almost surely all driving functions $\sqrt{\kappa}B$, $\kappa \ge 0$, simultaneously generate a curve. Furthermore, simulations suggest that for a fixed sample of $B$, the curve $\gamma^\kappa$ changes continuously in $\kappa$, but only partial proofs have been found so far. We remark that this question is not trivial to answer because in general, the trace does not depend continuously on its driver, as \cite[Example 4.49]{Law05} shows.

In \cite{JVRW14} the authors show that in the range $\kappa \in {[0, 8(2-\sqrt 3)[} \approx {[0, 2.1[}$, the answer to both of the above questions is positive. Our result \cref{thm:gamma_existence_hoelder} improves the range to $\kappa \in {[0, 8/3[}$.

%
%
%

We will often use the following bounds for the moments of $|\hat f_t'(iy)|$ that have been shown by F. Johansson-Viklund and G. Lawler in \cite{JVL11}. In order to state them, we use the following notation. Let $\kappa \ge 0$. Set
\begin{equation}\label{eq:moment_estimate_parameters}
\begin{split}
r_c = r_c(\kappa) &:= \frac{1}{2}+\frac{4}{\kappa},\\
\lambda(r) = \lambda(\kappa,r) &:= r\left(1+\frac{\kappa}{4}\right)-\frac{\kappa r^2}{8},\\
\zeta(r) = \zeta(\kappa,r) &:= r-\frac{\kappa r^2}{8}
\end{split}
\end{equation}
for $r<r_c(\kappa)$.

With the scaling invariance of SLE, \cite[Lemma 4.1]{JVL11} implies the following.
\begin{lemma}[{\cite[Lemma 2.1]{FT17}\footnote{Note that in \cite{FT17}, $\lambda$ was called $q$.}}]\label{thm:moment_estimate}
Let $\kappa > 0$, $r < r_c(\kappa)$. There exists a constant $C<\infty$ depending only on $\kappa$ and $r$ such that for all $t,y \in {]0,1]}$
\begin{align*}
\ex[|\hat f_t'(iy)|^{\lambda(r)}] \le C a(t) y^{\zeta(r)}
\end{align*}
where $a(t) = a(t,\zeta(r)) = t^{-\zeta(r)/2} \vee 1$.

Moreover, $C$ can be chosen independently of $\kappa$ and $r$ when $\kappa$ is bounded away from $0$ and $\infty$, and $r$ is bounded away from $-\infty$ and $r_c(\kappa)$.\footnote{Note that in \cite{JVL11}, the notation $a=2/\kappa$ and $q=r_c-r$ is used.}
\end{lemma}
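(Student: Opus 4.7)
The plan is to upgrade the time-$1$ base estimate of \cite[Lemma 4.1]{JVL11} to the full range $t,y\in{]0,1]}$ by exploiting Brownian scaling. The input I would rely on is
\[
\ex[|\hat f_1'(iy)|^{\lambda(r)}] \le C\, y^{\zeta(r)} \qquad \text{for } y\in{]0,1]},
\]
together with the companion bound $\ex[|\hat f_1'(iy)|^{\lambda(r)}] \le C$ for $y\ge 1$ (immediate from $\hat f_1'(iy)\to 1$ as $y\to\infty$, controlled via the reverse-flow representation). The uniformity of $C$ in $\kappa,r$ on the stated compacta is inherited from \cite{JVL11}, where it can be read off the constants appearing in the SDE analysis.

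First I would record the Brownian scaling identity for the flow. If $g_\tau$ is driven by $U(\tau)=\sqrt{\kappa}B_\tau$ and $s>0$, a direct computation shows that $\tilde g_\tau(z)\defeq s^{-1/2}g_{s\tau}(s^{1/2}z)$ satisfies the Loewner ODE with driver $\tilde U(\tau)\defeq s^{-1/2}U(s\tau)=\sqrt{\kappa}\tilde B_\tau$, where $\tilde B_\tau\defeq s^{-1/2}B_{s\tau}$ is again a standard Brownian motion. Transferring this through $\hat f=g^{-1}(\cdot+U)$ gives $\hat f_{s\tau}(w)=s^{1/2}\hat{\tilde f}_\tau(s^{-1/2}w)$ and hence $\hat f_{s\tau}'(iy)=\hat{\tilde f}_\tau'(iy/\sqrt{s})$. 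Setting $\tau=1$ and $s=t$, and using that $\hat{\tilde f}$ equals $\hat f$ in law, one obtains the distributional identity
\[
\hat f_t'(iy) \stackrel{d}{=} \hat f_1'(iy/\sqrt{t}).
\]

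With this in hand I would split into two cases. If $y\le\sqrt{t}$, the base estimate gives $\ex[|\hat f_t'(iy)|^{\lambda(r)}] \le C(y/\sqrt{t})^{\zeta(r)} = C\, t^{-\zeta(r)/2}\, y^{\zeta(r)}$; if $y>\sqrt{t}$, it gives $\ex[|\hat f_t'(iy)|^{\lambda(r)}] \le C$. The prefactor $a(t)=t^{-\zeta(r)/2}\vee 1$ is designed precisely so that $Ca(t)y^{\zeta(r)}$ dominates both cases on $t,y\in{]0,1]}$: when $\zeta(r)\ge 0$ one has $a(t)=t^{-\zeta(r)/2}$, the case-$1$ bound matches exactly, and the case-$2$ constant is absorbed since $(y/\sqrt{t})^{\zeta(r)}\ge 1$; when $\zeta(r)<0$ one has $a(t)=1$, the case-$2$ constant is absorbed since $y^{\zeta(r)}\ge 1$ (as $y\le 1$), and in case $1$ the extra factor $t^{-\zeta(r)/2}\le 1$ is harmless. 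This yields the claimed bound with a constant of the same nature as in \cite{JVL11}.

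No serious obstacle arises: the scaling computation is standard and the case check is elementary. The one point that deserves care is the regime $y\ge 1$ of the base estimate, which is not usually the focus of \cite[Lemma 4.1]{JVL11}; however this is by far the easier regime (the moments are uniformly bounded) and the required uniformity transfers from \cite{JVL11} without difficulty.
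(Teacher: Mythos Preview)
Your proposal is correct and matches the paper's approach exactly: the paper simply states that the lemma follows from \cite[Lemma 4.1]{JVL11} together with the scaling invariance of SLE, which is precisely the Brownian-scaling reduction to the time-$1$ estimate that you carry out. Your case analysis on the sign of $\zeta(r)$ is a careful way of verifying that the prefactor $a(t)=t^{-\zeta(r)/2}\vee 1$ absorbs both regimes, which the paper leaves implicit.
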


Now, for a standard Brownian motion $B$, and an \slek{} flow driven by $\sqrt{\kappa}B$, we write $\hat f^\kappa_t$, $\gamma^\kappa$, etc.

We also use the following notation from \cite{JVL11}.
\begin{equation*}
v(t,\kappa,y) := \int_0^y |(\hat f^\kappa_t)'(iu)| \, du.
\end{equation*}
Observe that $v(t,\kappa,\cdot)$ is decreasing in $y$ and
\begin{equation*}
|\hat f^\kappa_t(iy_1)-\hat f^\kappa_t(iy_2)| \le \int_{y_1}^{y_2} |(\hat f^\kappa_t)'(iu)| \, du = |v(t,\kappa,y_1)-v(t,\kappa,y_2)|.
\end{equation*}
Therefore $\lim_{y\searrow 0}\hat f^\kappa_t(iy)$ exists if $v(t,\kappa,y)<\infty$ for some $y>0$. For fixed $t$, $\kappa$, this happens almost surely because \cref{thm:moment_estimate} implies
\begin{equation*}
    \ex v(t,\kappa,y) = \int_0^y \ex|(\hat f^\kappa_t)'(iu)| \, du < \infty.
\end{equation*}
So we can define
\begin{equation*}
    \gamma(t,\kappa) = \begin{cases}
    \lim_{y \searrow 0} \hat f^\kappa_t(iy) & \text{if the limit exists,}\\
    \infty & \text{otherwise,}
    \end{cases}
\end{equation*}
as a random variable. Note that with this definition we can still estimate
\begin{equation*}
    |\gamma(t,\kappa)-\hat f^\kappa_t(iy)| \le v(t,\kappa,y).
\end{equation*}

\subsection{Almost sure regularity of SLE in $(t,\kappa)$}

In this subsection, we prove our first main result.

\begin{theorem}\label{thm:gamma_existence_hoelder}
Let $0 < \kappa_- < \kappa_+ < 8/3$. Let $B$ be a standard Brownian motion. Then almost surely the \slek{} trace $\gamma^\kappa$ driven by $\sqrt{\kappa}B$ exists for all $\kappa \in [\kappa_-,\kappa_+]$. Moreover, there exists a random variable $C$, depending on $\kappa_-$, $\kappa_+$, such that
\begin{align*}
|\gamma(t,\kappa)-\gamma(s,\tilde\kappa)| \le C (|t-s|^\alpha + |\kappa-\tilde\kappa|^\eta)
\end{align*}
for all $t,s \in [0,1]$, $\kappa,\tilde\kappa \in [\kappa_-,\kappa_+]$ where $\alpha, \eta > 0$ depend on $\kappa_+$. Moreover, $C$ can be chosen to have finite $\lambda$th moment for some $\lambda>1$.
\end{theorem}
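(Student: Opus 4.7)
The strategy is to apply the refined Kolmogorov criterion \cref{thm:kolmogorov} to the random field $(t,\kappa)\mapsto\gamma(t,\kappa)$ on $[0,1]\times[\kappa_-,\kappa_+]$, using the triangle-inequality split
\[ |\gamma(t,\kappa)-\gamma(s,\tilde\kappa)| \le \underbrace{|\gamma(t,\kappa)-\gamma(s,\kappa)|}_{A_1(t,s;\kappa)} + \underbrace{|\gamma(s,\kappa)-\gamma(s,\tilde\kappa)|}_{A_2(s;\kappa,\tilde\kappa)}. \]
This matches the template of \cref{thm:kolmogorov} with $J_1=J_2=1$, and crucially allows moment exponents $q_1\ne q_2$ on the two summands -- exactly the generality supplied by our refined theorem and not by classical GRR-type results.

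For $A_1$, the $t$-Hölder moment estimate of \cite{FT17}, based on \cref{thm:moment_estimate}, gives $\ex|A_1|^{\lambda(r)}\lesssim|t-s|^{(\lambda(r)+\zeta(r))/2}$ for any $r<r_c(\kappa)$, uniformly in $\kappa\in[\kappa_-,\kappa_+]$ by the uniformity clause of \cref{thm:moment_estimate}; so I take $q_1=\lambda(r)$, $\alpha_1=(\lambda(r)+\zeta(r))/2$ with $r$ chosen close to $r_c(\kappa_+)=\tfrac12+4/\kappa_+$. For $A_2$, the new moment estimate \eqref{eq:f_diffkappa_moment} together with Fatou's lemma (applied to the a.s.\ identity $\gamma(s,\kappa)-\gamma(s,\tilde\kappa)=\lim_{\delta\searrow0}(\hat f^\kappa_s(i\delta)-\hat f^{\tilde\kappa}_s(i\delta))$) yields $\ex|A_2|^p\lesssim|\sqrt\kappa-\sqrt{\tilde\kappa}|^p\lesssim|\kappa-\tilde\kappa|^p$ (using $\kappa$ bounded away from $0$) for any $p<1+8/\kappa_+$; so I take $q_2=\alpha_2=p$.

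The Kolmogorov condition $\alpha_1^{-1}+\alpha_2^{-1}<1$ then reads $\frac{2}{\lambda(r)+\zeta(r)}+\frac{1}{p}<1$. A direct calculation gives $\lambda(r_c)+\zeta(r_c)=1+\kappa/16+4/\kappa$; at the critical value $\kappa=8/3$ this equals $8/3$ and $1+8/\kappa=4$, so the sum is exactly $3/4+1/4=1$. For any $\kappa_+<8/3$ there is strict slack, and $r,p$ can be chosen so that the condition holds uniformly on $[\kappa_-,\kappa_+]$. \cref{thm:kolmogorov} then delivers a Hölder continuous modification $\hat\gamma$ with Hölder exponents $\gamma^{(1)},\gamma^{(2)}>0$ as in the statement, and Hölder constant $C\in L^{q_{\min}}$ for $q_{\min}=\lambda(r)\wedge p>1$. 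To get the simultaneous-existence statement, combine with the fact that for each fixed $\kappa$ the trace $\gamma(\cdot,\kappa)$ is a.s.\ continuous (\cite{RS05,LSW04}) and coincides with $\hat\gamma(\cdot,\kappa)$ at each fixed $t$ a.s., hence as continuous paths; a standard argument (e.g.\ running the same Kolmogorov scheme on the enlarged field $(t,\kappa,y)\mapsto\hat f^\kappa_t(iy)$, or invoking a countable dense set of $\kappa$'s together with stochastic continuity) upgrades this to an identification of $\gamma$ with $\hat\gamma$ simultaneously for all $\kappa\in[\kappa_-,\kappa_+]$.

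The main obstacle is the moment estimate \eqref{eq:f_diffkappa_moment} with $p$ approaching $1+8/\kappa$, whose proof is the genuine technical novelty of the paper and is deferred to \cref{sec:f_diffkappa_moment_pf}. Given it, the decisive benefit of the refined \cref{thm:kolmogorov} is that we may exploit the sharp $p$ in the $\kappa$-direction \emph{even though} the $t$-direction only offers the smaller exponent $\lambda\approx 2$; it is precisely this exponent mismatch $q_1\ne q_2$ that unlocks the improvement of the range from $\kappa<8(2-\sqrt3)$ in \cite{JVRW14} to $\kappa<8/3$.
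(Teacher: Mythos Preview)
Your approach is essentially the paper's: split via the triangle inequality, use the $t$-moment bound from \cite{FT17} and the $\kappa$-moment bound from \cref{thm:f_diffkappa_moment}, and feed both into the refined Kolmogorov criterion \cref{thm:kolmogorov}. Your exponent calculation is correct and matches the paper's (the paper writes the supremum of $\lambda+\zeta$ as $\tfrac{\kappa}{4}(\tfrac12+\tfrac4\kappa)^2$, which equals your $1+\kappa/16+4/\kappa$).

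There is, however, one genuine technicality you have glossed over. The $t$-increment estimate from \cite{FT17} is not $\ex|A_1|^\lambda\lesssim|t-s|^{(\lambda+\zeta)/2}$ uniformly in $t,s$; it carries a prefactor $a(t)+a(s)$ with $a(t)=t^{-\zeta/2}\vee 1$, which blows up at $t=0$. \Cref{thm:kolmogorov} as stated requires a fixed constant $C'$, so it does not directly apply on $[0,1]\times[\kappa_-,\kappa_+]$. The paper handles this in two steps: first apply \cref{thm:kolmogorov} on $[\varepsilon,1]\times[\kappa_-,\kappa_+]$ (where the constant is uniform), then let $\varepsilon\searrow 0$ using the local growth property of Loewner chains to get continuity down to $t=0$; second, to get a single H\"older constant on all of $[0,1]$ with controlled moments, apply \cref{thm:grr} directly and check that the resulting integral \eqref{eq:grr_condA} has finite expectation, which requires $a(t)$ integrable, i.e.\ $\zeta<2$. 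The paper verifies this extra constraint $\zeta<2$ is compatible with $(\tfrac{\lambda+\zeta}{2})^{-1}+p^{-1}<1$ for $\kappa<8/3$ (trivially for $\kappa>1$, with a short computation for $\kappa\le 1$). You should include this.

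Your Part 3 sketch (``countable dense set of $\kappa$'s together with stochastic continuity'') is on the right track, but an additional step is needed: knowing that $\tilde\gamma(\cdot,\kappa_n)=\gamma(\cdot,\kappa_n)$ for dense $\kappa_n$ and that $\tilde\gamma$ is continuous in $\kappa$ does not by itself say that $\tilde\gamma(\cdot,\kappa)$ is the trace generated by $\sqrt\kappa B$ for the non-dyadic $\kappa$. The paper closes this via Carath\'eodory kernel convergence: since $f^{\kappa_n}_t\to f^\kappa_t$ locally uniformly, $H^{\kappa_n}_t\to H^\kappa_t$ in kernel sense, and uniform convergence of $\tilde\gamma(\cdot,\kappa_n)\to\tilde\gamma(\cdot,\kappa)$ then forces $H^\kappa_t$ to be the unbounded component of $\HH\setminus\tilde\gamma([0,t],\kappa)$.
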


The theorem should be still true near $\kappa \approx 0$ (Without any integrability statement for $C$, it is shown in \cite{JVRW14}.), but due to complications in applying \cref{thm:moment_estimate} (cf. \cite[Proof of Lemma 3.3]{JVRW14}), we decided to omit it.

As in \cite{FT17}, we will estimate moments of the increments of $\gamma$, using \cref{thm:moment_estimate}. We need to be a little careful, though, when applying \cref{thm:moment_estimate}, that the exponents do depend on $\kappa$. Since we are going to apply that estimate a lot, let us agree on the following.

For every $\kappa > 0$, we will choose some $r_\kappa < r_c(\kappa)$, and we will call $\lambda_\kappa = \lambda(\kappa, r_\kappa)$ and $\zeta_\kappa = \zeta(\kappa, r_\kappa)$ (where $r_c$, $\lambda$, and $\zeta$ are defined in \eqref{eq:moment_estimate_parameters}). (The exact choices of $r_\kappa$ will be decided later.)

We will use the following moment estimates.
\begin{proposition}\label{thm:sle_diff_moments}
Let $0 < \kappa_- < \kappa_+ < \infty$. Let $t,s \in [0,1]$, $\kappa, \tilde\kappa \in [\kappa_-, \kappa_+]$, and $p \in [1,1+\frac{8}{\kappa_+}[$. Then (with the above notation) if $\lambda_\kappa \ge 1$, then
\begin{align*}
   \ex |\gamma(t,\kappa)-\gamma(s,\kappa)|^{\lambda_\kappa} &\le C (a(t,\zeta_\kappa)+a(s,\zeta_\kappa)) \, |t-s|^{(\zeta_\kappa+\lambda_\kappa)/2},\\
   \ex |\gamma(s,\kappa)-\gamma(s,\tilde\kappa)|^p &\le C|\sqrt{\kappa}-\sqrt{\tilde\kappa}|^p,
\end{align*}
where $C<\infty$ depends on $\kappa_-$, $\kappa_+$, $p$, and the choice of $r_\kappa$ (see above).
\end{proposition}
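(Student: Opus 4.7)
The two estimates are of rather different flavors. The first is essentially the time-regularity bound already established (for a single $\kappa$) in \cite{FT17}; my plan is to reproduce that argument while keeping careful track of the $\kappa$-dependence of constants, relying on the uniform-in-$\kappa$ statement in Lemma \ref{thm:moment_estimate}. The second estimate is the genuinely new input of the paper and reduces, as I explain below, to the refined moment bound \eqref{eq:f_diffkappa_moment} whose proof is postponed to Section \ref{sec:f_diffkappa_moment_pf}.

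For the first estimate I would set $y := \sqrt{|t-s|}$ and decompose
\[
\gamma(t,\kappa) - \gamma(s,\kappa) = \bigl[\gamma(t,\kappa) - \hat f^\kappa_t(iy)\bigr] + \bigl[\hat f^\kappa_t(iy) - \hat f^\kappa_s(iy)\bigr] + \bigl[\hat f^\kappa_s(iy) - \gamma(s,\kappa)\bigr].
\]
The outer two brackets are dominated in absolute value by $v(t,\kappa,y)$ and $v(s,\kappa,y)$; Minkowski's integral inequality (permissible since $\lambda_\kappa \ge 1$) combined with Lemma \ref{thm:moment_estimate} gives
\[
\ex v(u,\kappa,y)^{\lambda_\kappa} \lesssim a(u,\zeta_\kappa)\, y^{\lambda_\kappa+\zeta_\kappa} = a(u,\zeta_\kappa)\, |t-s|^{(\lambda_\kappa+\zeta_\kappa)/2}.
\]
For the middle bracket, It\^o's formula applied to $\hat f^\kappa_u(iy) = f^\kappa_u(iy+\sqrt\kappa B_u)$ yields
\[
d\hat f^\kappa_u(iy) = \hat f^\kappa_u{}'(iy)\sqrt\kappa\,dB_u + \left(-\frac{2\hat f^\kappa_u{}'(iy)}{iy} + \frac{\kappa}{2}\hat f^\kappa_u{}''(iy)\right)du;
\]
the martingale part is controlled via BDG in terms of moments of $|\hat f^\kappa_u{}'(iy)|$, while the Koebe distortion bound $|\hat f^\kappa_u{}''(iy)| \lesssim |\hat f^\kappa_u{}'(iy)|/y$ reduces the drift to moments of the first derivative as well. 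Both estimates are supplied by Lemma \ref{thm:moment_estimate} with a matching choice of $r$, and combine to the required $|t-s|^{(\lambda_\kappa+\zeta_\kappa)/2}$ scaling with prefactor $a(t,\zeta_\kappa)+a(s,\zeta_\kappa)$.

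For the second estimate the key observation is that no joint coupling of the regularising parameter $\delta>0$ across the two $\kappa$-sides is required. By Lemma \ref{thm:moment_estimate} with $\lambda = 1$ we have $\ex v(s,\kappa,\delta)<\infty$, so $\hat f^\kappa_s(i\delta) \to \gamma(s,\kappa)$ almost surely as $\delta \searrow 0$, and analogously for $\tilde\kappa$. Hence
\[
|\gamma(s,\kappa) - \gamma(s,\tilde\kappa)|^p = \lim_{\delta \searrow 0} |\hat f^\kappa_s(i\delta) - \hat f^{\tilde\kappa}_s(i\delta)|^p \quad \text{almost surely,}
\]
and Fatou's lemma together with \eqref{eq:f_diffkappa_moment}, which is valid for $p \in [1, 1+8/\kappa[$ and in particular uniformly in $\kappa \in [\kappa_-,\kappa_+]$ whenever $p < 1+8/\kappa_+$, delivers the bound at once. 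The main obstacle is therefore entirely \eqref{eq:f_diffkappa_moment} itself: the Fatou reduction is essentially for free, but obtaining the $|\sqrt\kappa - \sqrt{\tilde\kappa}|^p$ bound with a moment exponent that can approach $1+8/\kappa$ — rather than only some small exponent as would follow from a naive Gronwall argument on the coupled Loewner SDE — is the technical heart of the paper and demands the careful coupled analysis of $\hat f^\kappa$ and $\hat f^{\tilde\kappa}$ driven by a common Brownian motion carried out in Section \ref{sec:f_diffkappa_moment_pf}.
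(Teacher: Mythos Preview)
Your proposal is correct and takes essentially the same approach as the paper: the first estimate is simply cited as \cite[Lemma 3.2]{FT17} (which your three-term decomposition at scale $y=\sqrt{|t-s|}$ together with It\^o/BDG for the middle piece correctly outlines, with uniform constants coming from \cref{thm:moment_estimate}), and the second estimate is obtained from \cref{thm:f_diffkappa_moment} (i.e.\ \eqref{eq:f_diffkappa_moment}) by Fatou's lemma, exactly as you describe.
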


\begin{remark}
Note that $|\sqrt{\kappa}-\sqrt{\tilde\kappa}| \le C|\kappa-\tilde\kappa|$ if $\kappa,\tilde\kappa$ are bounded away from $0$.
\end{remark}

The first estimate is just \cite[Lemma 3.2]{FT17}.

The second estimate follows from the following result (which we will prove in \cref{sec:f_diffkappa_moment_pf}) and Fatou's lemma.

\begin{proposition}\label{thm:f_diffkappa_moment}
Let $0 < \kappa_- < \kappa_+ < \infty$ and $\kappa,\tilde\kappa \in [\kappa_-,\kappa_+]$. Let $t \in [0,T]$, $\delta \in {]0,1]}$, and $|x| \le \delta$. Then, for $1 \le p < 1+\frac{8}{\kappa_+}$, there exists $C < \infty$, depending on $\kappa_-$, $\kappa_+$, $T$, and $p$, such that
\begin{equation*}
    \ex |\hat f^\kappa_t(x+i\delta)-\hat f^{\tilde\kappa}_t(x+i\delta)|^p \le C|\sqrt{\kappa}-\sqrt{\tilde\kappa}|^p.
\end{equation*}
If $p > 1+\frac{8}{\kappa_+}$, then for any $\varepsilon > 0$ there exists $C < \infty$, depending on $\kappa_-$, $\kappa_+$, $T$, $p$, and $\varepsilon$, such that
\begin{equation*}
    \ex |\hat f^\kappa_t(x+i\delta)-\hat f^{\tilde\kappa}_t(x+i\delta)|^p \le C|\sqrt{\kappa}-\sqrt{\tilde\kappa}|^p \delta^{1+\frac{8}{\kappa_+}-p-\varepsilon}.
\end{equation*}
\end{proposition}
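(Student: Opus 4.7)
My plan is to work with the reverse Loewner flow. Write $\mu = \sqrt\kappa$, $\tilde\mu = \sqrt{\tilde\kappa}$, and let $\tilde h^\mu_t(w)$ solve
\[
d\tilde h^\mu_t(w) = -\frac{2\,dt}{\tilde h^\mu_t(w)} - \mu\,dB_t,\qquad \tilde h^\mu_0(w) = w.
\]
By the standard duality between the forward and reverse Loewner flow, together with the time-reversal symmetry of Brownian motion, the joint law of $(\hat f^\mu_t(z),\hat f^{\tilde\mu}_t(z))$ (both drivers sharing $B$) coincides with that of $(\tilde h^\mu_t(z),\tilde h^{\tilde\mu}_t(z))$ driven by a common, time-reversed, Brownian motion. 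Hence it suffices to estimate moments of $D_t := \tilde h^\mu_t(z) - \tilde h^{\tilde\mu}_t(z)$.

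The key observation is that $D_t$ satisfies the \emph{linear} SDE
\[
dD_t = \frac{2\,D_t}{\tilde h^\mu_t(z)\,\tilde h^{\tilde\mu}_t(z)}\,dt - (\mu-\tilde\mu)\,dB_t,\qquad D_0 = 0,
\]
with random but $D$-independent coefficients. Variation of constants gives
\[
D_t = -(\mu-\tilde\mu)\,E_t\int_0^t \frac{dB_s}{E_s},\qquad E_t := \exp\!\Bigl(\int_0^t \frac{2\,ds}{\tilde h^\mu_s(z)\,\tilde h^{\tilde\mu}_s(z)}\Bigr),
\]
which produces the factor $|\mu-\tilde\mu|^p = |\sqrt\kappa-\sqrt{\tilde\kappa}|^p$ directly. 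What remains is a uniform $L^p$-bound on $E_t \int_0^t E_s^{-1}\,dB_s$ (with a $\delta$-power correction in the supercritical regime). I would obtain it by H\"older's inequality with conjugate exponents $a,b$ and BDG applied to the $\mathcal F_s$-adapted inner martingale,
\[
\ex\bigl|E_t\textstyle\int_0^t E_s^{-1}\,dB_s\bigr|^p \le \bigl(\ex|E_t|^{pa}\bigr)^{1/a}\,\Bigl(\ex\bigl(\int_0^t|E_s|^{-2}\,ds\bigr)^{pb/2}\Bigr)^{1/b}.
\]
Observing that $E_t = (\tilde h^\mu_t)'(z)$ when $\mu = \tilde\mu$, the positive and negative moments of $|E_t|$ are controlled (via a continuity/Gronwall argument in $\mu$, using the identity $2/(xy) = 1/x^2+1/y^2-(1/x-1/y)^2$ to compare $\log|E_t|$ with $\log|(\tilde h^\mu_t)'|$ and $\log|(\tilde h^{\tilde\mu}_t)'|$) in terms of moments of $|(\tilde h^\mu_s)'(z)|$, and the latter by \cref{thm:moment_estimate} via the distributional identity $(\tilde h^\nu_s)'(z)\stackrel{d}{=}\hat f^{\nu^2\prime}_s(z)$.

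The main obstacle is the sharp exponent analysis: one must optimize $(a,b)$ and the auxiliary parameters $r_+, r_-$ (with $\lambda(r_+) = pa$ and $\lambda(r_-) = -pb$) so that the resulting $\delta$-power $\zeta(r_+)/a + \zeta(r_-)/b$ is non-negative precisely when $p < 1 + 8/\kappa_+$, and equals $1 + 8/\kappa_+ - p - \varepsilon$ (the $\varepsilon$ absorbing the boundary $r \to r_c$) when $p > 1 + 8/\kappa_+$. Note that $1+8/\kappa = 2r_c(\kappa)$, and it is the balance between the positive-moment contribution (limited by $r \to r_c$ from below) and the negative-moment contribution (which introduces a negative $\delta$-power growing with $p$) that picks out this threshold. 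A secondary technical point is that $E_t$ is $\mathcal F_t$-measurable but not $\mathcal F_s$-adapted, which forces the H\"older-then-BDG decomposition above rather than a direct BDG on the whole product.
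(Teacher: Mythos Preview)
Your variation-of-constants setup is clean and the linear SDE for $D_t$ is correct, but the H\"older--BDG route cannot reach the sharp threshold $p<1+\tfrac{8}{\kappa}=2r_c$ that the proposition asserts, and this is exactly the regime that matters (cf.\ \cref{rm:grr_application_jvrw}). The obstruction is concrete: identifying $E_t$ with $(\tilde h^\mu_t)'(z)$ (up to your correction), the bound $\ex|E_t|^{pa}<\infty$ with useful $\delta$-dependence requires $pa=\lambda(r_+)$ for some $r_+<r_c$, i.e.\ $pa<\lambda(r_c)=1+\tfrac{2}{\kappa}+\tfrac{3\kappa}{32}$. Since $a>1$ in any H\"older pair, this forces $p<\lambda(r_c)$. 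But for $\kappa<8$ one has $\lambda(r_c)<2r_c$, so your scheme already fails to cover $p\in[\lambda(r_c),\,2r_c)$. Moreover, even for $p<\lambda(r_c)$ your optimisation gives a combined $\delta$-exponent $\zeta(r_+)/a+\zeta(r_-)/b=-\tfrac{\kappa}{4}\bigl(\tfrac{r_+}{a}+\tfrac{r_-}{b}\bigr)$, and it is not clear one can make this nonnegative; in the natural limit $a\downarrow 1$ it tends to $-\tfrac{\kappa}{4}r_+<0$. A second soft spot is your ``continuity/Gronwall'' control of $|E_t|$ for $\mu\neq\tilde\mu$: the identity $2/(xy)=1/x^2+1/y^2-(1/x-1/y)^2$ re-introduces $D_s$ in the correction, and in the complex setting the sign of $\Re(1/\tilde h^\mu_s-1/\tilde h^{\tilde\mu}_s)^2$ is not definite, so neither an upper bound on $|E_t|$ nor (crucially for the BDG factor) a lower bound on $|E_s|$ follows.

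The paper avoids both problems by not splitting $E_t$ from $E_s^{-1}$. It starts from the deterministic pathwise bound of \cref{thm:h_diff}, which is essentially $|D_t|\le C\int_0^t |\Delta\sqrt\kappa|\,|B_s|\,\frac{1}{|z^1_sz^2_s|}\,\frac{1}{(y^1_sy^2_s)^{1/4}}\,|E_t/E_s|^{1/2}\,ds$ with the one-sided estimate $|E_t/E_s|\le (y^1_ty^2_t/y^1_sy^2_s)^{1/2}|(h^1_{s,t})'(h^2_{s,t})'|^{1/2}$; Cauchy--Schwarz then decouples the two flows. Taking the $p$th moment and Minkowski gives an $s$-integral of $(\ex[\,|B_s|^p\,y_s^{-p/2}\,|h'_{s,t}|^{p/2}\,])^{1/p}$, and the sharp exponent comes from conditioning on $\mathcal F_{\sigma(s)}$ after the reparametrisation $y_{\sigma(s)}=e^{as}$: the conditional future $|h'_{\sigma(s),t/\delta^2}|$ is estimated via the Schwarz-lemma bound $|f'(z)|\le\Im f(z)/\Im z$ and a small-moment bound, while the ``past'' contribution $|B_{\sigma(s)}|^p$ is handled by the refined tail estimates of \cite{JVL11} on the events $A_n$. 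The threshold $p<2r_c$ then emerges from the convergence of $\sum_n e^{np}\,\pr(A_n)^{1-\varepsilon}\asymp\sum_n e^{n(p-2r_c)}$ in \cref{thm:sum_An}. This conditioning-at-$\sigma(s)$ structure is precisely what your H\"older split throws away.
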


\medskip
\begin{remark}\label{rm:grr_application_jvrw}
Following the proof of \cite{JVRW14}, in particular using \cite[Lemma 2.3]{JVRW14} and \cref{thm:moment_estimate}, we can show
\begin{equation*}
    \ex |\hat f^\kappa_t(x+i\delta)-\hat f^{\tilde\kappa}_t(x+i\delta)|^{2\lambda-\varepsilon} \le C |\sqrt{\kappa}-\sqrt{\tilde\kappa}|^{2\lambda-\varepsilon} \delta^{-\lambda+\zeta-\varepsilon}.
\end{equation*}
If we use this estimate instead, we can estimate
\begin{align*}
    |\gamma(t,\kappa)-\gamma(s,\tilde\kappa)| &\le |\gamma(t,\kappa)-\gamma(s,\kappa)| + |\gamma(s,\kappa)-\gamma(s,\tilde\kappa)|\\
    &\le |\gamma(t,\kappa)-\gamma(s,\kappa)| \\
    &\quad + |\gamma(s,\kappa)-\hat f^\kappa_s(iy)| + |\hat f^\kappa_s(iy)-\hat f^{\tilde\kappa}_s(iy)| + |\hat f^{\tilde\kappa}_s(iy)-\gamma(s,\tilde\kappa)|
\end{align*}
with $y = |\Delta \kappa|$. Then, with
\begin{align*}
   \ex |\gamma(t,\kappa)-\gamma(s,\kappa)|^\lambda &\le C |t-s|^{(\zeta+\lambda)/2},\\
   \ex |\gamma(s,\kappa)-\hat f^\kappa_s(iy)|^\lambda &\le Cy^{\zeta+\lambda} = C |\kappa-\tilde\kappa|^{\zeta+\lambda},\\
   \ex |\hat f^\kappa_s(iy)-\hat f^{\tilde\kappa}_s(iy)|^{2\lambda-\varepsilon} &\le C |\kappa-\tilde\kappa|^{\zeta+\lambda-\varepsilon},
\end{align*}
\cref{thm:kolmogorov} applies if $(\frac{\zeta+\lambda}{2})^{-1}+(\zeta+\lambda)^{-1} < 1 \iff \zeta+\lambda > 3$, which happens when $\kappa \in {[0,8(2-\sqrt{3})[} \cup {]8(2+\sqrt{3}),\infty[}$ and with an appropriate choice of $r$. Hence, we recover the continuity of SLE in the same range as in \cite{JVRW14}.

Notice that for fixed $\kappa > 0$ the maximal value that $\zeta+\lambda$ can attain is $\frac{\kappa}{4}\left(\frac{1}{2}+\frac{4}{\kappa}\right)^2$ which is (for $\kappa < 8$) less than $p = 1+\frac{8}{\kappa}$ as in our \cref{thm:sle_diff_moments}. In other words, \cref{thm:sle_diff_moments} is really an improvement to \cite{JVRW14}.
\end{remark}

Below we write $x^+ = x\vee 0$ for $x \in \RR$.
\begin{corollary}\label{thm:fprime_diffkappa_moment}
    Under the same conditions as in \cref{thm:f_diffkappa_moment} we have
    \begin{equation*}
        \ex |(\hat f^\kappa_t)'(i\delta)-(\hat f^{\tilde\kappa}_t)'(i\delta)|^p \le C|\sqrt{\kappa}-\sqrt{\tilde\kappa}|^p \delta^{-p-(p-1-\frac{8}{\tilde\kappa}+\varepsilon)^+}
    \end{equation*}
    where $C<\infty$ depends on $\kappa_-$, $\kappa_+$, $T$, $p$, and $\varepsilon$.
\end{corollary}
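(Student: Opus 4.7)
The natural approach is to upgrade the function-value estimate in \cref{thm:f_diffkappa_moment} to a derivative estimate via the Cauchy integral formula. Since $\hat f^\kappa_t$ is holomorphic on $\HH$, I would write
\[
(\hat f^\kappa_t)'(i\delta)-(\hat f^{\tilde\kappa}_t)'(i\delta)
= \frac{1}{2\pi i}\oint_{|z-i\delta|=\delta/2}\frac{\hat f^\kappa_t(z)-\hat f^{\tilde\kappa}_t(z)}{(z-i\delta)^2}\,dz,
\]
choosing radius $\delta/2$ so that every point on the contour has imaginary part in $[\delta/2,3\delta/2]$ and real part of modulus at most $\delta/2$, hence satisfies the hypothesis of \cref{thm:f_diffkappa_moment} (with its $\delta$ parameter comparable to ours). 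The standard $ML$ bound followed by Jensen's inequality in the angular variable gives
\[
|(\hat f^\kappa_t)'(i\delta)-(\hat f^{\tilde\kappa}_t)'(i\delta)|^p \lesssim \frac{1}{\delta^p}\int_0^{2\pi}|\hat f^\kappa_t(z_\theta)-\hat f^{\tilde\kappa}_t(z_\theta)|^p\,\frac{d\theta}{2\pi},
\]
where $z_\theta=i\delta+(\delta/2)e^{i\theta}$.

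Taking expectations and inserting the pointwise bound from \cref{thm:f_diffkappa_moment} at each $z_\theta$ finishes the proof. When $p<1+8/\kappa_+$ the $\theta$-integrand is uniformly $\lesssim|\sqrt\kappa-\sqrt{\tilde\kappa}|^p$ and the conclusion carries only the $\delta^{-p}$ prefactor from Cauchy. When $p>1+8/\kappa_+$ the integrand picks up the additional $\delta^{1+8/\kappa_+-p-\varepsilon}$ from the second part of \cref{thm:f_diffkappa_moment}, which compounds with $\delta^{-p}$ into $\delta^{-2p+1+8/\kappa_+-\varepsilon}$. Both regimes fit under the single exponent $-p-(p-1-8/\kappa_+ +\varepsilon)^+$. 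Since $\tilde\kappa\le\kappa_+$ forces $(p-1-8/\tilde\kappa+\varepsilon)^+\le(p-1-8/\kappa_++\varepsilon)^+$ and $\delta\le 1$, what is produced is at least as strong as the bound stated in the corollary (with $\tilde\kappa$ in place of $\kappa_+$).

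There is no real obstacle: the only minor point to watch is that the contour must lie in the region where \cref{thm:f_diffkappa_moment} is available, which is exactly what dictates the choice $r=\delta/2$. A larger radius would violate the constraint $|{\rm Re}\,z|\le{\rm Im}\,z$ required by the lemma, while a smaller radius would squander the gain in $\delta$. The argument is the routine device of paying a factor $\delta^{-1}$ to convert an $L^p$ bound on a holomorphic function into one on its derivative.
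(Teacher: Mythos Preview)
Your approach is essentially identical to the paper's: Cauchy integral formula on the circle of radius $\delta/2$ around $i\delta$, bound the integrand via \cref{thm:f_diffkappa_moment}, and pull the $L^p$-norm inside (the paper uses Minkowski's inequality rather than Jensen on the angular average, but these are interchangeable here since $p\ge 1$).

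The only slip is in your final paragraph, where the inequality goes the wrong way. You correctly note that $(p-1-8/\tilde\kappa+\varepsilon)^+ \le (p-1-8/\kappa_++\varepsilon)^+$, but since $\delta \le 1$ this means $\delta^{-p-(p-1-8/\kappa_++\varepsilon)^+}$ is \emph{larger} than $\delta^{-p-(p-1-8/\tilde\kappa+\varepsilon)^+}$: the bound you actually produce (with $\kappa_+$) is \emph{weaker} than the one stated (with $\tilde\kappa$), not stronger. In fact the paper's own proof does exactly the same thing --- it invokes \cref{thm:f_diffkappa_moment}, which as stated only yields the exponent with $\kappa_+$, and then writes $\tilde\kappa$ without further comment --- so the $\tilde\kappa$ in the corollary's statement appears to be a typo for $\kappa_+$, harmless for the way the corollary is used downstream.
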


\begin{proof}
For a holomorphic function $f: \HH \to \HH$, Cauchy Integral Formula tells us that
\begin{equation*}
    f'(i\delta) = \frac{1}{i2\pi} \int_\alpha \frac{f(w)}{(w-i\delta)^2} \, dw
\end{equation*}
where we let $\alpha$ be a circle of radius $\delta/2$ around $i\delta$. Consequently,
\begin{equation*}
|(\hat f^\kappa_t)'(i\delta)-(\hat f^{\tilde\kappa}_t)'(i\delta)| \le \frac{1}{2\pi} \int_\alpha \frac{|\hat f^\kappa_t(w)-\hat f^{\tilde\kappa}_t(w)|}{\delta^2/4} \, |dw|.
\end{equation*}

For all $w$ on the circle $\alpha$ we have $\Im w \in [\delta/2,3\delta/2]$ and $\Re w \in [-\delta/2,\delta/2]$. Therefore \cref{thm:f_diffkappa_moment} implies
\begin{equation*}
\ex |\hat f^\kappa_t(w)-\hat f^{\tilde\kappa}_t(w)|^p \le C|\Delta \sqrt{\kappa}|^p \delta^{-(p-1-\frac{8}{\tilde\kappa}+\varepsilon)^+}.
\end{equation*}

By Minkowski's inequality,
\begin{equation*}
\ex |(\hat f^\kappa_t)'(i\delta)-(\hat f^{\tilde\kappa}_t)'(i\delta)|^p \le \left( \frac{1}{2\pi} \int_\alpha \frac{(\ex |\hat f^\kappa_t(w)-\hat f^{\tilde\kappa}_t(w)|^p)^{1/p}}{\delta^2/4} \, |dw| \right)^p,
\end{equation*}
and the result follows since the length of $\alpha$ is $\pi \delta$.
\end{proof}

With \cref{thm:sle_diff_moments}, we can now apply \cref{thm:kolmogorov} to construct a Hölder continuous version of the map $\gamma = \gamma(t,\kappa)$, whose Hölder constants have some finite moments.

There is just one detail we still have to take into consideration. In order to apply \cref{thm:kolmogorov}, we have to use one common exponent $\lambda$ on the entire range of $\kappa$ where we want to apply the GRR lemma. Of course, we can choose new values for $\lambda$ again when we consider a different range of $\kappa$.

Alternatively, we could formulate our GRR version to allow exponents to vary with the parameters. But this will not be necessary since we can break our desired interval for $\kappa$ into subintervals.

\begin{proof}[Proof of \cref{thm:gamma_existence_hoelder}]
Consider the joint \slek{} process in some range $\kappa \in [\kappa_-,\kappa_+]$. We can assume that the interval $[\kappa_-,\kappa_+]$ is so small that $\lambda(\kappa)$ and $\zeta(\kappa)$ are almost constant. Otherwise, break $[\kappa_-,\kappa_+]$ into small subintervals and consider each of them separately.

We perform the proof in three parts. First we construct a continuous version $\tilde\gamma$ of $\gamma$ using \cref{thm:kolmogorov}. Then, using \cref{thm:grr}, we show that $\tilde\gamma$ is jointly H\"older continuous in both variables. Finally, we show that for each $\kappa$, the path $\tilde\gamma(\cdot,\kappa)$ is indeed the \slek{} trace generated by $\sqrt{\kappa}B$.

\textbf{Part 1.} For the first part, we would like to apply \cref{thm:kolmogorov}. There is just one technical detail we need to account for. In the estimates of \cref{thm:sle_diff_moments}, there is a singularity at time $t=0$, but we have not formulated \cref{thm:kolmogorov} to allow $C'$ to have a singularity. Therefore, it is easier to apply \cref{thm:kolmogorov} on the domain $[\varepsilon,1] \times [\kappa_-,\kappa_+]$ with $\varepsilon > 0$. With $\varepsilon \searrow 0$, we obtain a continuous version of $\gamma$ on the domain $]0,1] \times [\kappa_-,\kappa_+]$. Due to the local growth property of Loewner chains, we must have $\lim_{t \searrow 0} \gamma(t,\kappa) = 0$ uniformly in $\kappa$, so we actually have a continuous version of $\gamma$ on $[0,1] \times [\kappa_-,\kappa_+]$.
\footnote{Alternatively, we could also use the same strategy as in the proof of \cref{thm:kolmogorov}, and deduce the result directly from \cref{thm:grr}.}

Now we apply \cref{thm:sle_diff_moments} on the domain $[\varepsilon,1] \times [\kappa_-,\kappa_+]$. For this, we pick $\lambda \ge 1$, $r_\kappa < r_c(\kappa)$, and $p \in {[1,1+\frac{8}{\kappa_+}[}$ in such a way that $\lambda_\kappa = \lambda$ for all $\kappa \in [\kappa_-,\kappa_+]$. The condition to apply \cref{thm:kolmogorov} is then $(\frac{\zeta+\lambda}{2})^{-1}+p^{-1} < 1$.

A computation shows that $\zeta+\lambda = \frac{\kappa}{4}r\left(1+\frac{8}{\kappa}-r\right)$ attains its maximal value $\frac{\kappa}{4}\left(\frac{1}{2}+\frac{4}{\kappa}\right)^2$ at $r = \frac{1}{2}+\frac{4}{\kappa} = r_c$. Note also that $\lambda(r_c) = 1+\frac{2}{\kappa}+\frac{3}{32}\kappa > 1$. Recall from above that we can pick any $p < 1+\frac{8}{\kappa}$. Therefore, the condition for the exponents is
\begin{equation*}
    \frac{2}{\frac{\kappa}{4}\left(\frac{1}{2}+\frac{4}{\kappa}\right)^2}+\frac{1}{1+\frac{8}{\kappa}} < 1 \iff \kappa < \frac{8}{3}.
\end{equation*}
This completes the first part of the proof and gives us a continuous random field $\tilde\gamma$.

\textbf{Part 2.} Now that we have a random continuous function $\tilde\gamma$, we can apply \cref{thm:grr}. As in the proof of \cref{thm:kolmogorov}, we show that the integrals \eqref{eq:grr_condA} and \eqref{eq:grr_condB} have finite expectation, and therefore are almost surely finite. Denoting $\abs{A_1(t,s;\kappa)} \defeq \abs{\gamma(t,\kappa)-\gamma(s,\kappa)}$, $\abs{A_2(s;\kappa,\tilde\kappa)} \defeq \abs{\gamma(s,\kappa)-\gamma(s,\tilde\kappa)}$, and the corresponding integrals by $M_1, M_2$, we have by \cref{thm:sle_diff_moments}
\begin{align*}
\ex M_1 &\lesssim \iiint (a(t)+a(s)) \abs{t-s}^{(\zeta+\lambda)/2-\beta_1} \,dt\,ds\,d\kappa , \\
\ex M_2 &\lesssim \iiint \abs{\kappa-\tilde\kappa}^{p-\beta_2} \,ds\,d\kappa\,d\tilde\kappa .
\end{align*}
Picking $\beta_1 = \frac{\zeta+\lambda}{2}+1-\varepsilon$, $\beta_2 = p+1-\varepsilon$, the condition for the exponents is again $(\frac{\zeta+\lambda}{2})^{-1}+p^{-1} < 1$. Additionally, we need to account for the singularity at $t=0$ in the first integrand. This is not a problem if the function $a(t) = t^{-\zeta/2} \vee 1$ is integrable.

To make $a(t) = t^{-\zeta/2} \vee 1$ integrable, we would like to have $\zeta < 2$. \footnote{Alternatively, we can drop this condition if we make statements about the \slek{} process only on $t \in [\varepsilon, 1]$ for some $\varepsilon > 0$.} Recall that $\zeta = r-\frac{\kappa r^2}{8}$ from \eqref{eq:moment_estimate_parameters}. In case $\kappa > 1$, we always have $\zeta < 2$. In case $\kappa \le 1$, we have $\zeta < 2$ for $r < \frac{4}{\kappa}(1-\sqrt{1-\kappa})$, or equivalently $\lambda(r) < 3-\sqrt{1-\kappa}$. Therefore we can certainly find $r$ such that $\zeta < 2$ and $\zeta+\lambda \approx 2+(3-\sqrt{1-\kappa})$, and $p \approx 9 < 1+\frac{8}{\kappa}$. The condition $(\frac{\zeta+\lambda}{2})^{-1}+p^{-1} < 1$ is still fulfilled.

This proves the statements about the H\"older continuity of $\tilde\gamma$. 

\textbf{Part 3.} In the final part, we show that for each $\kappa$, the path $\tilde\gamma(\cdot,\kappa)$ is indeed the \slek{} trace generated by $\sqrt{\kappa}B$.

First, we fix a countable dense subset $\mathcal{K}$ in $[\kappa_-,\kappa_+]$. There exists a set $\Omega_1$ of probability 1 such that for all $\omega\in \Omega_1$, all $\kappa\in \mathcal{K}$, $\gamma(\kappa,t)$ exists and is continuous in $t$.

Since $\tilde\gamma$ is a version of $\gamma$, for all $t$,
\[ \pr\big( \gamma(t,\kappa)= \tilde\gamma(t,\kappa) \text{ for all } \kappa\in \mathcal{K}\big)=1.\] 
Hence, there exists a set $\Omega_2$ with probability 1 such that for all $\omega\in \Omega_2$, we have $\gamma(t,\kappa)=\tilde\gamma(t,\kappa)$ for all $\kappa\in \mathcal{K}$ and almost all $t$. 
Restricted to $\omega\in\Omega_3=\Omega_1\cap \Omega_2$, the previous statement is true for all $\kappa\in \mathcal{K}$ and all $t.$ We claim that on the set $\Omega_3$ of probability 1, the path $t \mapsto \tilde\gamma(t,\kappa)$ is indeed the \slek{} trace driven by $\sqrt{\kappa}B$. This can be shown in the same way as \cite[Theorem 4.7]{LSW04}.

Indeed, fix $t \in [0,1]$ and let $H_t = f^\kappa_t(\HH)$. We show that $H_t$ is the unbounded connected component of $\HH \setminus \tilde\gamma([0,t],\kappa)$ \footnote{Actually, there is only one component because it will turn out that $\tilde\gamma(\cdot,\kappa)$ is a simple trace.}. Find a sequence of $\kappa_n \in \mathcal{K}$ with $\kappa_n \to \kappa$ and let $(f^{\kappa_n}_t)$ be the corresponding inverse Loewner maps. Since $\sqrt{\kappa_n}B \to \sqrt{\kappa}B$, the Loewner differential equation implies that $f^{\kappa_n}_t \to f^\kappa_t$ uniformly on each compact set of $\HH$. By the chordal version of the Carathéodory kernel theorem (see \cite[Theorem 1.8]{Pom92}) which can be easily shown with the obvious adaptions, it follows that $H^{\kappa_n}_t \to H_t$ in the sense of kernel convergence. Since $\kappa_n\in\mathcal{K}$, we have $H^{\kappa_n}_t = \HH \setminus \gamma([0,t],\kappa_n) = \HH \setminus \tilde\gamma([0,t],\kappa_n)$. Therefore, the definitions of kernel convergence and the uniform continuity of $\tilde\gamma$ imply that $H_t$ is the unbounded connected component of $\HH \setminus \tilde\gamma([0,t],\kappa)$.
\end{proof}

By \cref{thm:gamma_existence_hoelder}, we now know that with probability one, the \slek{} trace $\gamma = \gamma(t,\kappa)$ is jointly continuous in $[0,1] \times [\kappa_-,\kappa_+]$. Similarly, applying \cref{thm:grr_pvar}, we can show the following.
\begin{theorem}\label{thm:sle_pvar_kappa}
Let $0 < \kappa_- < \kappa_+ < 8/3$. Let $\gamma^\kappa$ be the \slek{} trace driven by $\sqrt{\kappa}B$, and assume it is jointly continuous in $(t,\kappa) \in [0,1] \times [\kappa_-,\kappa_+]$. Consider $\gamma^\kappa$ as an element of $C^0([0,1])$ (with the metric $\|\cdot\|_\infty$). 

Then for some $0 < p <1/\eta$ (with $\eta$ from \cref{thm:gamma_existence_hoelder}), the $p$-variation of $\kappa \mapsto \gamma^\kappa$, $\kappa \in [\kappa_-,\kappa_+]$, is a.s. finite and bounded by some random variable $C$, depending on $\kappa_-$, $\kappa_+$, that has finite $\lambda$th moment for some $\lambda>1$.
\end{theorem}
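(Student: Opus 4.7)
The plan is to invoke \cref{thm:grr_pvar} applied to $G(\kappa,t) \defeq \gamma(t,\kappa)$, viewed as a continuous $\HH$-valued function on $[\kappa_-,\kappa_+] \times [0,1]$; continuity is guaranteed by \cref{thm:gamma_existence_hoelder}. I set $A_1(\kappa,\tilde\kappa;t) \defeq \gamma(t,\kappa)-\gamma(t,\tilde\kappa)$ and $A_2(\tilde\kappa;t,s) \defeq \gamma(t,\tilde\kappa)-\gamma(s,\tilde\kappa)$, so that \eqref{e:G_assump} holds trivially with $J_1=J_2=1$, and the two moment bounds of \cref{thm:sle_diff_moments} are exactly what is needed to control the integrals \eqref{eq:grr_condA}--\eqref{eq:grr_condB}.

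Next I follow the parameter choice of Part 2 in the proof of \cref{thm:gamma_existence_hoelder}: fix some $p_* \in [1,1+8/\kappa_+[$ and choose $r_\kappa$ so that $\lambda_\kappa$ is constant equal to some $\lambda>1$ on $[\kappa_-,\kappa_+]$, additionally small enough that $\zeta<2$, which makes $a(t) = t^{-\zeta/2}\vee 1$ integrable on $[0,1]$. Taking $q_{11}=p_*$, $q_{21}=\lambda$ and $\beta_{11} = p_*+1-\varepsilon$, $\beta_{21} = (\zeta+\lambda)/2+1-\varepsilon$ for small $\varepsilon>0$, \cref{thm:sle_diff_moments} gives $\ex M_{11}, \ex M_{21}<\infty$, so that $M_{11}, M_{21}<\infty$ almost surely. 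The structural condition $(\beta_1-2)(\beta_2-2)-1>0$ inherited from \cref{thm:grr} reduces, at the boundary values of the $\beta$'s, to precisely the inequality characterising $\kappa_+<8/3$, as already verified in Part 2 of the proof of \cref{thm:gamma_existence_hoelder}.

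\cref{thm:grr_pvar} then yields a bound $C(M_{11}^{1/p_*}+M_{21}^{1/\lambda})$ on the $p$-variation of $\kappa \mapsto \gamma^\kappa \in C^0([0,1])$, with
\[
p_{\mathrm{var}} = \frac{p_*}{\beta_{11}-1-b}\vee\frac{\lambda}{(\beta_{21}-2)b},
\]
where $b \ge 0$ is a free parameter. I would balance the two maxima by setting $b = \frac{\lambda p_*}{p_*((\zeta+\lambda)/2-1)+\lambda}$, which (in the limit $\varepsilon\to 0$) gives $p_{\mathrm{var}} = 1+\frac{\lambda}{p_*((\zeta+\lambda)/2-1)}$. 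Comparing with the Hölder exponent $\eta = \frac{((\zeta+\lambda)/2-1)(p_*-1)-1}{p_*((\zeta+\lambda)/2-1)+\lambda}$ coming from \cref{thm:kolmogorov} inside the proof of \cref{thm:gamma_existence_hoelder}, a short algebraic check reduces the inequality $p_{\mathrm{var}}<1/\eta$ to $(\zeta+\lambda)/2 > 0$, which is automatic. Finite integrability of the bounding constant follows from $\ex[(M_{11}^{1/p_*})^{p_*}]=\ex M_{11}<\infty$ and $\ex[(M_{21}^{1/\lambda})^{\lambda}]=\ex M_{21}<\infty$, giving a finite $\lambda'$th moment of $C$ for $\lambda' = \min(p_*,\lambda)>1$, as required.

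The main technical point I expect is the algebraic verification that the balanced $b$ really gives $p_{\mathrm{var}}$ \emph{strictly} less than $1/\eta$; after cancellation of common factors this collapses to the trivial strict inequality $(\zeta+\lambda)/2 > 0$, which also leaves room to absorb the $\varepsilon$-perturbations of $\beta_{11}$ and $\beta_{21}$. A subsidiary bookkeeping issue is ensuring that the singularity of $a(t)$ at $t=0$ does not spoil the integrability hypothesis of \cref{thm:grr_pvar}; this is resolved by the constraint $\zeta<2$ already built into the parameter choice of \cref{thm:gamma_existence_hoelder}.
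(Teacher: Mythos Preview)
Your proposal is correct and follows essentially the same approach as the paper: apply \cref{thm:grr_pvar} with the roles of the two variables arranged so that $\kappa$ is the first variable, using the same moment inputs from \cref{thm:sle_diff_moments} and the same parameter choices as in Part~2 of the proof of \cref{thm:gamma_existence_hoelder}. The paper merely states that the result follows ``similarly, applying \cref{thm:grr_pvar}'', whereas you have carried out the optimisation over $b$ and the algebraic verification that the resulting $p_{\mathrm{var}}$ is strictly below $1/\eta$; your reduction of this to $(\zeta+\lambda)/2>0$ is correct (the strict inequality $p_*A > A(p_*-1)-1$ leaves exactly the margin needed to absorb the $\varepsilon$-perturbations).
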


We know that for fixed $\kappa \le 4$, the \slek{} trace is almost surely simple. It is natural to expect that there is a common set of probability $1$ where all \slek{} traces, $\kappa < 8/3$, are simple. This is indeed true.

\begin{theorem}
    Let $B$ be a standard Brownian motion. We have with probability $1$ that for all $\kappa < 8/3$ the \slek{} trace driven by $\sqrt{\kappa}B$ is simple.
\end{theorem}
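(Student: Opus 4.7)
The plan is to transfer simplicity from a dense set of $\kappa$-values to all $\kappa\in(0,8/3)$ via the joint continuity of \cref{thm:gamma_existence_hoelder}. Fix a countable dense set $\mathcal K\subset(0,8/3)$ and a full-probability event $\Omega^*$ on which $(t,\kappa)\mapsto\gamma(t,\kappa)$ is jointly continuous on $[0,1]\times(0,8/3)$ and, additionally, $\gamma^\kappa$ is simple for every $\kappa\in\mathcal K$ (a.s.\ simplicity at each fixed $\kappa\in(0,4)$ is the classical \cite{RS05,LSW04} result, and $\mathcal K$ is countable). On $\Omega^*$, I would argue by contradiction: assume $\gamma^{\kappa_0}$ is not simple for some $\kappa_0\in(0,8/3)$.

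Non-simplicity should yield a point $w_0\in\HH\setminus\gamma^{\kappa_0}[0,1]$ with $w_0\notin H_1^{\kappa_0}$ (the capacity parametrisation forbids retracing of any sub-arc, so a self-intersection forces an enclosed bounded complementary component); pick $w_0$ at positive distance from $\gamma^{\kappa_0}[0,1]\cup\RR$. Take $\kappa_n\in\mathcal K$ with $\kappa_n\to\kappa_0$. By uniform convergence $\gamma^{\kappa_n}\to\gamma^{\kappa_0}$ together with simplicity of $\gamma^{\kappa_n}$, we have $w_0\in H_1^{\kappa_n}=\HH\setminus\gamma^{\kappa_n}[0,1]$ for large $n$, so $w_0=f_1^{\kappa_n}(\zeta_n)$ for some $\zeta_n\in\HH$. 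Continuity of the driver $\sqrt{\kappa_n}B\to\sqrt{\kappa_0}B$ uniformly on $[0,1]$ propagates through the Loewner ODE to $f_1^{\kappa_n}\to f_1^{\kappa_0}$ locally uniformly on $\HH$; since $w_0\notin f_1^{\kappa_0}(\HH)$, $\zeta_n$ has no subsequential limit in $\HH$, and the hydrodynamic normalisation $f_1^\kappa(\zeta)=\zeta+O(1/\zeta)$ (uniformly in $\kappa$ in a compact range) rules out $|\zeta_n|\to\infty$; hence $\operatorname{Im}\zeta_n\to 0$.

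The final step is to derive a contradiction from $\operatorname{Im}\zeta_n\to 0$. Each $f_1^{\kappa_n}$ with $\kappa_n<4$ extends continuously to $\overline\HH$ and sends $\RR$ into $\partial H_1^{\kappa_n}=\gamma^{\kappa_n}[0,1]\cup\RR$. Provided the boundary modulus of continuity of $f_1^{\kappa_n}$ is uniform in $n$, we obtain $|f_1^{\kappa_n}(\zeta_n)-f_1^{\kappa_n}(\operatorname{Re}\zeta_n)|\to 0$; passing to a subsequence, $f_1^{\kappa_n}(\operatorname{Re}\zeta_n)=\gamma^{\kappa_n}(s_n)\to\gamma^{\kappa_0}(s_\infty)$ (the real-line case is excluded since $w_0$ is away from $\RR$), giving $w_0=\gamma^{\kappa_0}(s_\infty)\in\gamma^{\kappa_0}[0,1]$, contradicting the choice of $w_0$. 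The main obstacle is precisely this uniform boundary modulus of continuity of $f_1^\kappa$ as $\kappa$ ranges over a compact subinterval of $(0,8/3)$: it should follow from the joint H\"older regularity of $\gamma$ in \cref{thm:gamma_existence_hoelder} via classical conformal-distortion estimates (Warschawski/Kellogg type) applied to the uniformly H\"older Jordan arc $\gamma^{\kappa_n}$; alternatively, the elementary bound $|f_1^\kappa(\zeta)-f_1^\kappa(\operatorname{Re}\zeta)|\le\int_0^{\operatorname{Im}\zeta}|(f_1^\kappa)'(\operatorname{Re}\zeta+iu)|\,du$ combined with the $\kappa$-uniform moment estimates of \cref{thm:moment_estimate} on $|(\hat f_1^\kappa)'|$, upgraded to an a.s.\ uniform-in-$\kappa$ statement via the GRR-type arguments of this paper, delivers the same conclusion.
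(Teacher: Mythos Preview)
Your approach is genuinely different from the paper's and considerably heavier. The paper does not use the joint continuity of the trace at all here. Instead it reduces simplicity, via \cite[Theorem~6.1]{RS05}, to showing $K_t^\kappa\cap\RR=\{0\}$ for all $t$ and all $\kappa$, which is a pure Bessel-process statement: $X_t=(g_t(x)-\sqrt\kappa B_t)/\sqrt\kappa$ is a Bessel process of dimension $1+4/\kappa$, and one needs all of these (coupled through the same $B$) to avoid $0$. This follows by a one-line monotonicity argument (Gr\"onwall): if $\kappa<\tilde\kappa$ then $X_t^{\kappa,x}>X_t^{\tilde\kappa,x}$ for all $t>0$, so non-hitting for a single $\kappa_0\le 4$ (classical) propagates to all smaller $\kappa$. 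This is short, uses no analytic estimates, and the core argument actually works for all $\kappa\le 4$; the $8/3$ enters only because one needs the trace to exist simultaneously.

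Your route can in principle be completed, but the obstacle you flag is real and not resolved by what you cite. The bounds in \cref{thm:fv_hoelder_convergence} and its corollary control $|(\hat f_t^\kappa)'(iy)|$ uniformly in $(t,\kappa)$, i.e.\ the derivative of $f_t^\kappa$ at the \emph{tip} $U(t)+iy$; they do not directly give a uniform boundary modulus for $f_1^\kappa$ at an arbitrary real point $\Re\zeta_n$, which is what your contradiction needs. One can try to transfer tip regularity to other boundary points via the semigroup decomposition $f_1=f_s\circ g_{s,1}^{-1}$, or invoke Warschawski-type estimates with uniform constants coming from the joint H\"older bound on $\gamma$, but either route is a genuine extra argument. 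There is also a small topological point you pass over: ``non-simple $\Rightarrow$ swallowed point in $\HH$'' needs a short justification (first self-intersection time gives a Jordan loop, or a real touching gives a crosscut); the paper's Bessel reduction sidesteps this entirely since $K_t^\kappa\cap\RR=\{0\}$ already excludes the real-touching scenario.
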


\begin{proof}
    As shown in \cite[Theorem 6.1]{RS05}, due to the independent stationary increments of Brownian motion, this is equivalent to saying that $K^\kappa_t \cap \RR = \{0\}$ for all $t$ and $\kappa$, where $K^\kappa_t = \{ z \in \barH \mid T^\kappa_z \le t \}$ (the upper index denotes the dependence on $\kappa$). 
    
    Let $(g_t(x))_{t \ge 0}$ satisfy \eqref{eq:loewner} with $g_0(x)=x$ and driving function $U(t)=\sqrt{\kappa}B_t$. Then $X_t = \frac{g_t(x)-\sqrt{\kappa}B_t}{\sqrt{\kappa}}$ satisfies
    \begin{equation*}
        dX_t = \frac{2/\kappa}{X_t}\, dt - dB_t,
    \end{equation*}
    i.e. $X$ is a Bessel process of dimension $1+\frac{4}{\kappa}$. The statement $K^\kappa_t \cap \RR = \{0\}$ is equivalent to saying that $X_s \neq 0$ for all $x \neq 0$ and $s \in [0,t]$. This is a well-known property of Bessel processes, and stated in the lemma below.
\end{proof}

\begin{lemma}
    Let $B$ be a standard Brownian motion and suppose that we have a family of stochastic processes $X^{\kappa,x}$, $\kappa,x>0$, that satisfy
    \begin{equation*}
        X^{\kappa,x}_t = x + B_t + \int_0^t \frac{2/\kappa}{X^{\kappa,x}_s} \, ds, \quad t \in [0,T_{\kappa,x}]
    \end{equation*}
    where $T_{\kappa,x} = \inf\{ t \ge 0 \mid X^{\kappa,x}_t=0 \}$.
    
    Then we have with probability $1$ that $T_{\kappa,x}=\infty$ for all $\kappa \le 4$ and $x>0$.
\end{lemma}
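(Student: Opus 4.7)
The plan is to reduce the simultaneous statement to the classical fact that a $2$-dimensional Bessel process started from $x > 0$ almost surely never hits the origin, by exploiting pathwise monotonicity of the SDE separately in $\kappa$ and in $x$.

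The key ingredient is a pair of pathwise comparison principles, in which all processes are driven by the same Brownian motion $B$: (i) for fixed $x > 0$ and $\kappa_1 \le \kappa_2$ one has $X^{\kappa_1, x}_t \ge X^{\kappa_2, x}_t$ on the common time interval (the smaller $\kappa_1$ corresponds to a larger outward drift $2/\kappa$, hence keeps the process farther from $0$); and (ii) for fixed $\kappa$ and $0 < x_1 \le x_2$ one has $X^{\kappa, x_1}_t \le X^{\kappa, x_2}_t$ on the common interval. Both follow from standard one-dimensional SDE comparison applied after stopping the processes above a level $\varepsilon > 0$ (on which the drift $(2/\kappa)/y$ is Lipschitz) and then letting $\varepsilon \to 0$. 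Equivalently, both are consequences of the classical comparison theorem for squared Bessel processes applied to $(X^{\kappa, x})^2$, which satisfies the squared Bessel SDE of dimension $d(\kappa) = 1 + 4/\kappa$ and is monotone in both the dimension and the initial value.

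With (i) and (ii) in hand, I would fix $x_n = 1/n$ and, using that for each $n$ the process $X^{4, x_n}$ is a $2$-dimensional Bessel process started at $x_n > 0$, set
\[
\Omega_0 \defeq \bigcap_{n \ge 1} \{ T_{4, x_n} = \infty \},
\]
which has probability one. On $\Omega_0$, for any $\kappa \le 4$ and $x > 0$, choosing $n$ with $x_n \le x$ and combining (i) and (ii) yields
\[
X^{\kappa, x}_t \;\ge\; X^{4, x}_t \;\ge\; X^{4, x_n}_t \;>\; 0 \quad \text{for all } t \ge 0,
\]
so $T_{\kappa, x} = \infty$. The main technical obstacle is the careful justification of the comparison principles, since the drift $(2/\kappa)/y$ is singular at $0$; this is handled either by the stopping argument sketched above (noting that at any tangency time the drift of the difference is nonnegative and the diffusion term vanishes, so the ordering cannot be reversed) or avoided entirely by the squared-Bessel formulation, where all coefficients are standard.
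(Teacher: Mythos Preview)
Your proof is correct and follows the same strategy as the paper: reduce the simultaneous statement to a single fixed Bessel process via pathwise monotonicity in the parameters. The paper is terser, invoking only the $\kappa$-comparison (justified there by Gr\"onwall's inequality) and taking the result for all $x>0$ at fixed $\kappa$ as known from \cite{Law05}; you make the $x$-comparison explicit via a countable sequence $x_n\downarrow 0$ and are more careful about the singular drift, but the underlying argument is the same.
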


\begin{proof}
    For fixed $\kappa \le 4$, see e.g. \cite[Proposition 1.21]{Law05}. To get the result simultaneously for all $\kappa$, use the property that if $\kappa < \tilde\kappa$ and $x>0$, then $X^{\kappa,x}_t > X^{\tilde\kappa,x}_t$ for all $t>0$, which follows from Gr\"onwall's inequality.
\end{proof}


\subsection{Stochastic continuity of \slek{} in $\kappa$}
\label{sec:stochastic_continuity}

In the previous section, we have shown almost sure continuity of \slek{} in $\kappa$ (in the range $\kappa \in [0,8/3[$). Weaker forms of continuity are easier to prove, and hold on a larger range of $\kappa$. We will show here that stochastic continuity (also continuity in $L^q(\pr)$ sense for some $q>1$ depending on $\kappa$) for all $\kappa \neq 8$ is an immediate consequence of our estimates. Below we write $\| f \|_{C^\alpha[a,b]} := \sup \frac{|f(t)-f(s)|}{|t-s|^\alpha}$, with $\sup$ taken over all $s<t$ in $[a,b]$.

\begin{theorem}\label{thm:sle_stochastic_continuity}
Let $\kappa > 0$, $\kappa \neq 8$. Then there exists $\alpha > 0$, $q > 1$, $r > 0$, and $C < \infty$ (depending on $\kappa$) such that if $\tilde\kappa$ is sufficiently close to $\kappa$ (where ``sufficiently close'' depends on $\kappa$), then
\begin{equation*}
    \ex \left[ \| \gamma(\cdot, \kappa)-\gamma(\cdot, \tilde\kappa) \|_{C^\alpha[0,1]}^q \right] \le C |\kappa - \tilde\kappa|^r .
\end{equation*}
In particular, if $\kappa_n \to \kappa$ exponentially fast, then $\| \gamma(\cdot, \kappa)-\gamma(\cdot, \kappa_n) \|_{C^\alpha[0,1]} \to 0$ almost surely.
\end{theorem}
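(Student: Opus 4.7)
The plan is to define $X(t) := \gamma(t,\kappa) - \gamma(t,\tilde\kappa)$ and derive a moment estimate of the form
\begin{equation*}
\ex|X(t) - X(s)|^q \lesssim |\kappa-\tilde\kappa|^{q(1-\theta)}\,|t-s|^{a}
\end{equation*}
by interpolating between the time-Hölder moment bound for the individual traces and the $L^p$-closeness estimate of \cref{thm:sle_diff_moments}, then applying the one-dimensional GRR statement \cref{thm:grr_1d} to $t \mapsto X(t)$ to upgrade this to a Hölder bound in $L^q$. Almost sure convergence along exponentially fast sequences would then follow from Markov's inequality and Borel--Cantelli.

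More concretely, first I would invoke \cref{thm:sle_diff_moments} twice. For some $r < r_c(\kappa)$ with $\lambda = \lambda(\kappa,r)$, $\zeta = \zeta(\kappa,r)$, and $\tilde\kappa$ close enough to $\kappa$ that the corresponding parameters for $\tilde\kappa$ are essentially the same, the triangle inequality gives
\begin{equation*}
\ex|X(t) - X(s)|^\lambda \lesssim (a(t,\zeta)+a(s,\zeta))\,|t-s|^{(\zeta+\lambda)/2}.
\end{equation*}
On the other hand, combining the second estimate in \cref{thm:sle_diff_moments} with $|X(t)-X(s)| \le |X(t)|+|X(s)|$ yields, for any $p \in [1, 1+8/\kappa_+)$,
\begin{equation*}
\ex|X(t) - X(s)|^p \lesssim |\kappa-\tilde\kappa|^p.
\end{equation*}

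Next I would perform $L^p$ log-convexity interpolation: for $\theta \in (0,1)$ and $q$ defined by $1/q = \theta/\lambda + (1-\theta)/p$, Hölder's inequality yields
\begin{equation*}
\ex|X(t)-X(s)|^q \lesssim (a(t)+a(s))^{q\theta/\lambda}\,|t-s|^{(\zeta+\lambda)q\theta/(2\lambda)}\,|\kappa-\tilde\kappa|^{q(1-\theta)}.
\end{equation*}
The weight $a(t)^{q\theta/\lambda}$ is integrable on $[0,1]$ whenever $\zeta q\theta/(2\lambda) < 1$, which is easily arranged, and $X(0)=0$ holds deterministically, so applying \cref{thm:grr_1d} to $X$ on $[0,1]$ with exponent $\beta$ chosen just below $1+(\zeta+\lambda)q\theta/(2\lambda)$ produces
\begin{equation*}
\ex\bigl[\|X\|_{C^\alpha[0,1]}^q\bigr] \lesssim |\kappa-\tilde\kappa|^{q(1-\theta)}
\end{equation*}
for any $\alpha < (\zeta+\lambda)\theta/(2\lambda) - 1/q$. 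The ``in particular'' conclusion then follows: along any exponentially fast $\kappa_n \to \kappa$, the right-hand side is summable, and Markov plus Borel--Cantelli yields the almost sure statement.

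The feasibility check for $\kappa \neq 8$ is where the interesting parameter constraint enters. A direct computation gives $\max_{r<r_c(\kappa)}(\zeta+\lambda) = 1 + \kappa/16 + 4/\kappa$, which is $>2$ precisely when $\kappa \neq 8$ (the minimum value $2$ being attained at $\kappa=8$). Choosing $r$ so that $\zeta+\lambda > 2$ and then $\theta$ close enough to $1$ makes the required condition $(\zeta+\lambda-2)\theta/(2\lambda) > (1-\theta)/p$ (equivalent to $\alpha > 0$) satisfied, while $q > 1$ automatically since $\lambda > 1$ for all $\kappa>0$ at the optimal $r$. The main obstacle I anticipate is parameter bookkeeping for $\kappa$ near $8$: both $\max(\zeta+\lambda)-2$ and $8/\kappa - (p-1)$ become small, forcing $\theta$ close to $1$ while $p$ is capped at $1+8/\kappa_+$, so joint feasibility requires some care. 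A secondary technicality, the $t\to 0$ singularity in $a(t)$, is absorbed by the GRR integration (one has the freedom to take $\zeta < 2\lambda/(q\theta)$) and does not amount to a genuine obstruction.
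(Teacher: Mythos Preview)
Your approach is correct and reaches the same conclusion under the same feasibility condition $\zeta+\lambda>2$ (equivalently $\kappa\neq 8$, with $\zeta<2$ arranged as in the paper). The route, however, is genuinely different from the paper's. The paper does \emph{not} interpolate moments; instead it splits
\[
|G(t)-G(s)|\le A_1(t,s)+A_2(t,s)
\]
according to whether $|t-s|\le|\kappa-\tilde\kappa|$ or $|t-s|>|\kappa-\tilde\kappa|$, bounds $\ex|A_1|^\lambda$ by the time-increment estimate and $\ex|A_2|^p$ by the $\kappa$-increment estimate, and then applies the mixed-exponent one-dimensional GRR (\cref{thm:grr_1d}) with \emph{two different} exponents $\lambda$ and $p$. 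The indicator restrictions on the integration domain are what produce the positive power of $|\kappa-\tilde\kappa|$ in both GRR integrals. Your approach instead fuses the two moment bounds into a single one via $L^p$ log-convexity, and then only needs the classical single-exponent GRR. This is more elementary (it does not use the paper's new mixed-exponent lemma at all), at the cost of the interpolation step slightly degrading the achievable exponents $\alpha$ and $r$; for the qualitative statement of the theorem this loss is immaterial. The paper's route, by contrast, illustrates the utility of its new tool even in the one-parameter setting.
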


Note that without sufficiently fast convergence of $\kappa_n \to \kappa$ it is not clear whether we can pass from $L^q$-convergence to almost sure convergence.

\begin{proof}
Fix $\kappa, \tilde\kappa \neq 8$. We apply \cref{thm:grr_1d} to the function $G: [0,1] \to \CC$, $G(t) = \gamma(t,\kappa)-\gamma(t,\tilde\kappa)$. We have
\begin{align*}
    |G(t)-G(s)| 
    &\le (|\gamma(t,\kappa)-\gamma(s,\kappa)|+|\gamma(t,\tilde\kappa)-\gamma(s,\tilde\kappa)|) \, 1_{|t-s| \le |\kappa-\tilde\kappa|} \\
    &\qquad + (|\gamma(t,\kappa)-\gamma(t,\tilde\kappa)|+|\gamma(s,\kappa)-\gamma(s,\tilde\kappa)|) \, 1_{|t-s| > |\kappa-\tilde\kappa|}\\
    &=: A_1(t,s) + A_2(t,s)
\end{align*}
where by \cref{thm:sle_diff_moments}
\begin{align*}
   \ex |A_1(t,s)|^\lambda &\le C (a^1(t)+a^1(s)) \, |t-s|^{(\zeta+\lambda)/2} \, 1_{|t-s| \le |\kappa-\tilde\kappa|},\\
   \ex |A_2(t,s)|^p &\le C|\kappa-\tilde\kappa|^p \, 1_{|t-s| > |\kappa-\tilde\kappa|},
\end{align*}
for suitable $\lambda \ge 1$, $p \in [1,1+\frac{8}{\kappa}[$.

It follows that, for $\beta_1,\beta_2 > 0$,
\begin{align*}
    \ex \iint \frac{|A_1(t,s)|^\lambda}{|t-s|^{\beta_1}} \, dt \, ds 
    &\le C \iint_{|t-s| \le |\kappa-\tilde\kappa|} (a^1(t)+a^1(s)) \, |t-s|^{(\zeta+\lambda)/2-\beta_1} \, dt \, ds \\
    &\le C |\kappa-\tilde\kappa|^{(\zeta+\lambda)/2-\beta_1+1},\\
    \ex \iint \frac{|A_2(t,s)|^p}{|t-s|^{\beta_2}} \, dt \, ds 
    &\le C|\kappa-\tilde\kappa|^p \iint_{|t-s| > |\kappa-\tilde\kappa|} |t-s|^{-\beta_2} \, dt \, ds \\
    &\le C|\kappa-\tilde\kappa|^{p-\beta_2+1}
\end{align*}
if $\zeta < 2$ and $\beta_1 < \frac{\zeta+\lambda}{2}+1$.

Recall that if $\kappa \neq 8$ and $\tilde\kappa$ is sufficiently close to $\kappa$, then the parameters $\lambda,\zeta$ are almost the same for $\kappa$ and $\tilde\kappa$, and (see the proof of \cref{thm:gamma_existence_hoelder}) they can be picked such that $\zeta < 2$ and $\zeta+\lambda > 2$. Hence, we can pick $\beta_1,\beta_2 > 2$ such that $2 < \beta_1 < \frac{\zeta+\lambda}{2}+1$ and $2 < \beta_2 < 1+p < 2+\frac{8}{\kappa}$.

The result follows from \cref{thm:grr_1d}, where we take $\alpha = \frac{\beta_1-2}{\lambda} \wedge \frac{\beta_2-2}{p}$ and $q = \lambda \wedge p$, which implies
\[
\ex \left[ \| G \|_{C^\alpha[0,1]}^q \right] \le C \ex\left[ \left( \iint \frac{|A_1(t,s)|^\lambda}{|t-s|^{\beta_1}} \, dt \, ds \right)^{q/\lambda} + \left( \iint \frac{|A_2(t,s)|^p}{|t-s|^{\beta_2}} \, dt \, ds \right)^{q/p} \right] .
\]
\end{proof}

\begin{corollary}\label{co:sle_stochastic_continuity}
For any $\kappa > 0$, $\kappa \neq 8$ and any sequence $\kappa_n \to \kappa$ we then have $\|\gamma^\kappa-\gamma^{\kappa_n}\|_{p\text{-var};[0,1]} \to 0$ in probability, for any $p > (1 + \kappa / 8) \wedge 2$. 
\end{corollary}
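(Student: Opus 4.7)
The idea is to interpolate between the sup-norm convergence furnished by \cref{thm:sle_stochastic_continuity} and a uniform-in-$n$ bound on the $p_0$-variation of $\gamma^{\kappa_n}$ for some auxiliary exponent $p_0 \in \bigl((1+\kappa/8)\wedge 2,\,p\bigr)$. Fix $p > (1+\kappa/8)\wedge 2$ and choose such a $p_0$. First, I would apply \cref{thm:sle_stochastic_continuity} for $\kappa_n$ sufficiently close to $\kappa$ to obtain
\[
\ex\bigl[\|\gamma^\kappa-\gamma^{\kappa_n}\|_{\infty;[0,1]}^{q}\bigr] \le \ex\bigl[\|\gamma^\kappa-\gamma^{\kappa_n}\|_{C^\alpha[0,1]}^{q}\bigr] \le C\,|\kappa-\kappa_n|^r
\]
with $\alpha,q,r>0$. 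Markov's inequality then yields $\|\gamma^\kappa-\gamma^{\kappa_n}\|_{\infty;[0,1]} \to 0$ in probability.

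Next, I would establish that, for some $\delta>0$,
\[
\sup_{\kappa'\in [\kappa-\delta,\kappa+\delta]} \ex\bigl[\|\gamma^{\kappa'}\|_{p_0\text{-var};[0,1]}^{\mu}\bigr] < \infty
\]
for some $\mu>0$, which in particular makes the family $\{\|\gamma^{\kappa_n}\|_{p_0\text{-var};[0,1]}\}_n$ tight. For $\kappa<8$ this is essentially the content of \cite{FT17}: starting from the moment estimate of \cref{thm:sle_diff_moments} (first inequality), the choice $r_{\kappa'}$ can be made so that the resulting exponents satisfy $p_0 > 1+\kappa'/8$ uniformly in $\kappa'$ near $\kappa$, and the corresponding GRR-type argument (in the spirit of \cref{thm:grr_1d}) yields the desired uniform moment bound. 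For $\kappa>8$ one invokes the analogous FT17 statement giving finite $p_0$-variation for any $p_0>2$, again with bounds uniform in a small neighborhood of $\kappa$.

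Finally, for any continuous $f:[0,1]\to\CC$ and any $p > p_0$ one has the elementary interpolation
\[
\|f\|_{p\text{-var};[0,1]}^{p} \;\le\; (2\|f\|_\infty)^{p-p_0}\,\|f\|_{p_0\text{-var};[0,1]}^{p_0},
\]
obtained by bounding $|f(t_i)-f(t_{i-1})|^p \le (2\|f\|_\infty)^{p-p_0}\,|f(t_i)-f(t_{i-1})|^{p_0}$ and summing over a partition. Applied to $f = \gamma^\kappa-\gamma^{\kappa_n}$, together with the triangle-like inequality $\|f\|_{p_0\text{-var}} \le \|\gamma^\kappa\|_{p_0\text{-var}}+\|\gamma^{\kappa_n}\|_{p_0\text{-var}}$, the right-hand side is the product of a factor vanishing in probability (Step~1) and a tight factor (Step~2), hence converges to $0$ in probability. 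This gives the claim. The main obstacle I expect is Step~2 in the regime $\kappa>8$: a direct Kolmogorov-type application of \cref{thm:moment_estimate} does not reach $p_0$-variation arbitrarily close to $2$, and one has to appeal to the refined space-filling argument of \cite{FT17}.
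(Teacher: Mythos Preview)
Your proposal is correct and follows essentially the same route as the paper: deduce $\|\cdot\|_\infty$-convergence in probability from \cref{thm:sle_stochastic_continuity}, invoke the uniform $p_0$-variation moment bounds from \cite{FT17} for tightness, and interpolate to upgrade to $p$-variation convergence. The only cosmetic difference is that the paper packages the interpolation step by citing \cite[Proposition 5.5]{FV10}, whereas you write out the elementary inequality $\|f\|_{p\text{-var}}^{p} \le (2\|f\|_\infty)^{p-p_0}\|f\|_{p_0\text{-var}}^{p_0}$ directly; your anticipated obstacle for $\kappa>8$ is likewise handled in the paper by the same citation to \cite{FT17}.
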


\begin{proof}
\cref{thm:sle_stochastic_continuity} immediately implies the statement with $\norm{\cdot}_\infty$. To upgrade the result to Hölder and $p$-variation topologies, recall the following general fact which follows from the interpolation inequalities for Hölder and $p$-variation constants (see e.g. \cite[Proposition 5.5]{FV10}):

Suppose $X_n$, $X$ are continuous stochastic processes such that for every $\varepsilon > 0$ there exists $M>0$ such that $\pr(\|X_n\|_{p\text{-var};[0,T]} > M) < \varepsilon$ for all $n$. If $X_n \to X$ in probability with respect to the $\|\cdot\|_\infty$ topology, then also with respect to the $p'$-variation topology for any $p' > p$. The analogous statement holds for Hölder topologies with $\alpha' < \alpha \leq 1$.

In order to apply this fact, we can use \cite[Theorem 5.2 and 6.1]{FT17} which bound the moments of $\|\gamma\|_{p\text{-var}}$ and $\|\gamma\|_{C^\alpha}$. The values for $p$ and $\alpha$ have also been computed there.
\end{proof}


\section{Convergence results} 
\label{sec:fhat_convergence}

Here we prove a stronger version of \cref{thm:gamma_existence_hoelder}, namely uniform convergence (even convergence in Hölder sense) of $\hat f^\kappa_t(iy)$ as $y \searrow 0$. For this result, we really use the full power of \cref{thm:grr} (actually \cref{le:grr_relaxed} as we will explain later). We point out that this is a stronger result than \cref{thm:main}, and that our previous proofs of \cref{thm:main,thm:main2} do not rely on this section.

The Hölder continuity in \cref{thm:gamma_existence_hoelder} induces an (inhomogeneous) Hölder space, with (inhomogeneous) Hölder constant that we denote by
\[ \norm{\gamma}_{C^{\alpha,\eta}} \defeq \sup_{(t,\kappa) \neq (s,\tilde\kappa)} \frac{\abs{\gamma(t,\kappa)-\gamma(s,\tilde\kappa)}}{\abs{t-s}^\alpha+\abs{\kappa-\tilde\kappa}^\eta} . \]

As before, we write
\begin{equation*}
    v(t,\kappa,y) = \int_0^y |(\hat f^\kappa_t)'(iu)| \, du.
\end{equation*}

\begin{theorem}\label{thm:fv_hoelder_convergence}
Let $\kappa_- > 0$, $\kappa_+ < 8/3$. Then $\|v(\cdot,\cdot,y)\|_{\infty; [0,1] \times [\kappa_-,\kappa_+]} \searrow 0$ almost surely as $y \searrow 0$. In particular, $\hat f^\kappa_t(iy)$ converges uniformly in $(t,\kappa) \in [0,1] \times [\kappa_-,\kappa_+]$ as $y \searrow 0$.

Moreover, both functions converge also almost surely in the same Hölder space $C^{\alpha,\eta}([0,1] \times [\kappa_-,\kappa_+])$ as in \cref{thm:gamma_existence_hoelder}.

Moreover, the (random) H\"older constants of $v(\cdot,\cdot,y)$ and $(t,\kappa) \mapsto \abs{\gamma(t,\kappa)-\hat f^\kappa_t(iy)}$ satisfy
\[ \ex[\norm{v(\cdot,\cdot,y)}_{C^{\alpha,\eta}}^\lambda] \le Cy^r \quad\text{and}\quad \ex[\norm{\gamma(\cdot,\cdot) - \hat f^\cdot_\cdot(iy)}_{C^{\alpha,\eta}}^\lambda] \le Cy^r \]
for some $\lambda > 1$, $r>0$ and $C<\infty$, and all $y \in {]0,1]}$.
\end{theorem}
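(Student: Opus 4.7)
The plan is to apply the refined GRR machinery of Section 2, in particular the relaxed variant \cref{le:grr_relaxed}, to the random field $(t,\kappa) \mapsto v(t,\kappa,y)$ (or equivalently $(t,\kappa) \mapsto \hat f^\kappa_t(iy) - \gamma(t,\kappa)$) on $[0,1] \times [\kappa_-,\kappa_+]$, producing a joint Hölder bound whose constant is controlled by $y^r$ in suitable moments. Given such a bound, almost-sure uniform convergence $\|v(\cdot,\cdot,y)\|_\infty \searrow 0$ is obtained as follows: $v(t,\kappa,y) \to 0$ pointwise (since $\ex v(t,\kappa,y) \lesssim a(t) y^{1+\zeta}$ by \cref{thm:moment_estimate}), combined with the target moment estimate $\ex\|v(\cdot,\cdot,y)\|_{C^{\alpha,\eta}}^\lambda \le Cy^r$ and Borel-Cantelli, yields $\|v(\cdot,\cdot,y_n)\|_\infty \to 0$ a.s.\ along a geometric subsequence; monotonicity of $v$ in $y$ then gives the full limit. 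The companion statement $\hat f^\kappa_t(iy) \to \gamma(t,\kappa)$ is immediate from $|\hat f^\kappa_t(iy) - \gamma(t,\kappa)| \le v(t,\kappa,y)$.

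The increment moment bounds for $v$ are obtained by combining two kinds of estimates. First, the pointwise $L^\lambda$-bound $\ex v(t,\kappa,y)^\lambda \lesssim a(t) y^{\lambda+\zeta}$ (from \cref{thm:moment_estimate} and Jensen) yields via the triangle inequality a bound $\ex |v(t,\kappa,y) - v(s,\tilde\kappa,y)|^\lambda \lesssim y^{\lambda+\zeta}$ with no explicit dependence on $|t-s|$ or $|\kappa-\tilde\kappa|$. Second, classical Hölder-type moment estimates supply the needed increment structure: in the $t$-direction via the method of \cite{FT17} (derivative estimates from \cref{thm:moment_estimate} combined with the Cauchy integral formula), and in the $\kappa$-direction via \cref{thm:sle_diff_moments} and \cref{thm:fprime_diffkappa_moment}. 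Interpolating between these two gives estimates of the form
\[
\ex|v(t,\kappa,y)-v(s,\kappa,y)|^{\lambda} \lesssim y^{r_1} |t-s|^{\alpha_1}, \qquad \ex|v(t,\kappa,y)-v(t,\tilde\kappa,y)|^{p} \lesssim y^{r_2} |\kappa-\tilde\kappa|^{p}
\]
with $r_1, r_2 > 0$, and $\alpha_1, p$ in the admissible ranges.

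The reason the relaxed \cref{le:grr_relaxed} is needed (rather than the plain \cref{thm:grr}) is that in the $\kappa$-direction the naive Minkowski estimate $\int_0^y (\ex|(\hat f^\kappa_t-\hat f^{\tilde\kappa}_t)'(iu)|^p)^{1/p}\,du$ diverges logarithmically at $u=0$ in the regime $p \le 1+8/\kappa_+$. The remedy is a dyadic decomposition $v(t,\kappa,y) - v(t,\tilde\kappa,y) = \sum_k \int_{y_{k+1}}^{y_k}(|(\hat f^\kappa_t)'(iu)|-|(\hat f^{\tilde\kappa}_t)'(iu)|)\,du$ with $y_k = r^{-k}y$, which via Koebe-type distortion reduces each piece to a scale-$y_k$ derivative increment controlled by \cref{thm:fprime_diffkappa_moment} at the fixed scale $u = y_k$. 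The geometric weights $r^{-k\theta}$ in the telescoping assumption of \cref{le:grr_relaxed} absorb the summed contributions, yielding the desired increment bound in $\kappa$.

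The main obstacle is to check that the resulting exponents simultaneously satisfy the GRR hypotheses ($\beta_i > 2$, $(\beta_1-2)(\beta_2-2) - 1 > 0$, and $(\beta_{1j}-2)/q_{1j} < \theta_j$) while keeping the rate $y^r$ strictly positive. The admissible parameter window is essentially the same one used in the proof of \cref{thm:gamma_existence_hoelder}, requiring $\kappa_+ < 8/3$; a careful tracking of the $y$-dependence through the GRR integrals $M_{ij}$ (where the domain of $u$-integration is $[0,y]$, yielding the extra $y^r$ factor) then produces the claimed moment bound $\ex\|v(\cdot,\cdot,y)\|_{C^{\alpha,\eta}}^\lambda \le Cy^r$, and the analogous bound for $\gamma - \hat f^\cdot_\cdot(iy)$ follows either from the same argument applied to this difference directly, or from the pointwise dominance by $v$.
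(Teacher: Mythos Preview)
Your overall strategy (apply the refined GRR to get a Hölder bound on $(t,\kappa) \mapsto v(t,\kappa,y)$ with a constant controlled by $y^r$, then conclude by monotonicity) is the right one, and matches the paper. But you have misidentified \emph{where} and \emph{why} the relaxed variant \cref{le:grr_relaxed} is needed, and this is a genuine gap.

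You claim the relaxed lemma is needed in the $\kappa$-direction because of a logarithmic divergence in $\int_0^y (\ex|(\hat f^\kappa_t-\hat f^{\tilde\kappa}_t)'(iu)|^p)^{1/p}\,du$, and propose to fix it by a dyadic decomposition of the $u$-integral, $y_k = r^{-k}y$. But \cref{le:grr_relaxed} does not do this: its telescoping sum rescales the \emph{parameter} arguments (here $t$ or $\kappa$), not an auxiliary integration variable. In the paper the $\kappa$-direction is handled without the relaxed lemma at all, by splitting the $u$-integral at the threshold $y \wedge |\kappa-\tilde\kappa|^{p/(\zeta+\lambda)}$: on the small-$u$ part one uses the crude bound $\ex|(\hat f^\kappa_s)'(iu)-(\hat f^{\tilde\kappa}_s)'(iu)|^\lambda \lesssim a(s)u^\zeta$ from \cref{thm:moment_estimate}, and on the large-$u$ part one uses \cref{thm:fprime_diffkappa_moment}. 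Both pieces fit directly into the ordinary \cref{thm:grr}.

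The real difficulty, which you do not address, is in the $t$-direction. Your remark ``in the $t$-direction via the method of \cite{FT17}'' glosses over the following: for $u > |t-s|^{1/2}$, the best available bound on $|\hat f_t'(iu)-\hat f_s'(iu)|$ is of order $|\hat f_s'(iu)|\cdot |t-s|^{1/2}/u$ (from the Loewner equation, cf.\ \eqref{eq:hatfprime_difftime}), which after integration in $u$ only yields $\ex|A_{1*}|^\lambda = O(|t-s|^{\lambda/2})$, not the required $O(|t-s|^{(\zeta+\lambda)/2})$. Since the GRR condition needs $\beta_1$ close to $\frac{\zeta+\lambda}{2}+1$ to reach $\kappa_+ < 8/3$, this shortfall is fatal. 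The paper's fix is to split the $u$-integral at $|t-s|^{1/2}$, use \eqref{eq:hatfprime_difftime} to further decompose the large-$u$ part, and isolate a piece $A_{13}$ supported on the thin shell $[\,|t-s|^{1/2},\,2|t-s|^{1/2}\,]$. This piece does \emph{not} satisfy the pointwise hypothesis \eqref{e:G_assump} of \cref{thm:grr}, but does satisfy the telescoped hypothesis \eqref{e:G_assump_weaker} of \cref{le:grr_relaxed} (with rescaling in $t$, not $\kappa$), because rescaling $(t,s)$ by $r^k$ shifts the shell and the pieces tile the full range $[\,|t-s|^{1/2},\,y\,]$. This is the actual reason the relaxed lemma enters.

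A smaller point: the paper applies GRR not to $v(\cdot,\cdot,y)$ itself but to the difference $G = v(\cdot,\cdot,y)-v(\cdot,\cdot,y_1)$, because it is not a priori clear that $v(\cdot,\cdot,y)$ is continuous in $(t,\kappa)$ (it is defined as an improper integral near $u=0$), whereas $G = \int_{y_1}^y |(\hat f^\kappa_t)'(iu)|\,du$ manifestly is. The monotonicity of the bounding quantities $A_{ij}$ in $y$ then gives the a.s.\ Cauchy property directly, without needing your Borel--Cantelli step along a subsequence.
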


As a consequence, we obtain also an improved version of \cite[Lemma 3.3]{JVRW14}.
\begin{corollary}
Let $\kappa_- > 0$, $\kappa_+ < 8/3$. Then there exist $\beta < 1$ and a random variable $c(\omega) < \infty$ such that almost surely
\[ \sup_{(t,\kappa) \in [0,1] \times [\kappa_-,\kappa_+]} \abs{(\hat f^\kappa_t)'(iy)} \le c(\omega) y^{-\beta} \]
for all $y \in {]0,1]}$.
\end{corollary}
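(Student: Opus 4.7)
The plan is to combine the quantitative rate of convergence of $v(t,\kappa,y) = \int_0^y |(\hat f^\kappa_t)'(iu)|\, du$ as $y \searrow 0$ provided by \cref{thm:fv_hoelder_convergence} with a Koebe-distortion argument to convert the integral control on $|(\hat f^\kappa_t)'|$ into a pointwise control. The underlying principle is that $v$ vanishes uniformly at a polynomial rate, while distortion theorems for univalent maps on $\HH$ allow us to lose only a factor of $y^{-1}$ when passing from the integral bound to the pointwise bound.

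First I would upgrade \cref{thm:fv_hoelder_convergence} to a uniform-in-$(t,\kappa)$ moment bound. Since $\hat f^\kappa_0 = \mathrm{id}$, we have $v(0,\kappa,y) = y$ for any $\kappa$, so anchoring the Hölder seminorm at the point $(0,\kappa_-)$ gives
\[
\|v(\cdot,\cdot,y)\|_{\infty;\, [0,1]\times[\kappa_-,\kappa_+]} \le y + C_0 \|v(\cdot,\cdot,y)\|_{C^{\alpha,\eta}},
\]
where $C_0$ depends only on the diameter of $[0,1]\times[\kappa_-,\kappa_+]$. Combined with $\ex[\|v(\cdot,\cdot,y)\|_{C^{\alpha,\eta}}^\lambda] \le Cy^r$, this yields $\ex[\|v(\cdot,\cdot,y)\|_\infty^\lambda] \le C y^{r'}$ for some $r'>0$ and $\lambda>1$. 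A Markov/Borel--Cantelli argument applied along the dyadic sequence $y_n = 2^{-n}$ then gives, almost surely, $\|v(\cdot,\cdot,y_n)\|_\infty \le c(\omega) y_n^{\beta_0}$ for some $\beta_0 \in (0, r'/\lambda)$ and all $n$ large. Monotonicity of $v$ in $y$ (since the integrand is nonnegative) interpolates this bound to all $y \in (0,1]$, so almost surely
\[
\sup_{(t,\kappa) \in [0,1]\times[\kappa_-,\kappa_+]} v(t,\kappa,y) \le c(\omega)\, y^{\beta_0}, \qquad y \in (0,1].
\]

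Next I would invoke Koebe's distortion theorem for the univalent maps $\hat f^\kappa_t : \HH \to \HH \setminus K^\kappa_t$. The hyperbolic distance between $iy$ and $iu$ is $|\log(u/y)| \le \log 2$ for $u \in [y, 2y]$, so distortion gives a universal constant $c_K > 0$ with $|(\hat f^\kappa_t)'(iu)| \ge c_K |(\hat f^\kappa_t)'(iy)|$ for all such $u$, uniformly in $(t,\kappa)$. Integrating,
\[
v(t,\kappa,2y) \ge \int_y^{2y} |(\hat f^\kappa_t)'(iu)|\, du \ge c_K\, y\, |(\hat f^\kappa_t)'(iy)|,
\]
so $|(\hat f^\kappa_t)'(iy)| \le c_K^{-1} y^{-1} v(t,\kappa,2y)$. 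Combining this with the pathwise bound above yields
\[
\sup_{(t,\kappa)} |(\hat f^\kappa_t)'(iy)| \le c_K^{-1} y^{-1} \cdot c(\omega) (2y)^{\beta_0} = c'(\omega)\, y^{-(1-\beta_0)},
\]
valid for $y \in (0, 1/2]$, and trivially extended to $y \in (1/2, 1]$ by a deterministic bound, so $\beta = 1 - \beta_0 < 1$ works.

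The main obstacle is really just the first step, namely propagating the Hölder convergence of \cref{thm:fv_hoelder_convergence} into a pathwise uniform decay rate for $v$: the Borel--Cantelli upgrade is routine but does require that $r > 0$ in the statement of \cref{thm:fv_hoelder_convergence} (so that some positive power $y^{\beta_0}$ survives after dividing by $y$ in the Koebe step). The Koebe-distortion step itself is standard and uniform in $(t,\kappa)$ since distortion estimates for univalent maps on $\HH$ depend only on hyperbolic distance, not on the map.
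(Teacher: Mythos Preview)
Your proposal is correct and follows essentially the same route as the paper: obtain a polynomial decay rate for $\|v(\cdot,\cdot,y)\|_\infty$ from \cref{thm:fv_hoelder_convergence} via Borel--Cantelli along dyadics, then convert to a pointwise derivative bound using a Koebe-type estimate. The only cosmetic difference is that the paper invokes Koebe's $1/4$-theorem directly to get $y\,|(\hat f^\kappa_t)'(iy)| \le 4\,\dist(\hat f^\kappa_t(iy),\partial H^\kappa_t) \le 4\,v(t,\kappa,y)$, whereas you integrate the distortion bound over $[y,2y]$; and the paper interpolates between dyadics using Koebe distortion on $|(\hat f^\kappa_t)'|$, whereas you interpolate using monotonicity of $v$.
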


\begin{proof}
By Koebe's $1/4$-Theorem we have $y\abs{(\hat f^\kappa_t)'(iy)} \le 4\dist(\hat f^\kappa_t(iy), \partial H^\kappa_t) \le 4v(t,\kappa,y)$. \cref{thm:fv_hoelder_convergence} and the Borel-Cantelli lemma imply
\[ \norm{v(\cdot,\cdot,2^{-n})}_\infty \le 2^{-nr'} \]
for some $r'>0$ and sufficiently large (depending on $\omega$) $n$. The result then follows by Koebe's distortion theorem (with $\beta = 1-r'$).
\end{proof}

The same method as \cref{thm:fv_hoelder_convergence} can be used to show the existence and H\"older continuity of the \slek{} trace for fixed $\kappa \neq 8$, avoiding a Borel-Cantelli argument. The best way of formulating this result is the terminology in \cite{FT17}.

For $\delta \in {]0,1[}$, $q \in {]1,\infty[}$, define the fractional Sobolev (Slobodeckij) semi-norm of a measurable function $x: [0,1] \to \CC$ as
\begin{align*}
\|x\|_{W^{\delta,q}} := \left( \int_0^1 \int_0^1 \frac{|x(t)-x(s)|^q}{|t-s|^{1+\delta q}} \, ds \, dt \right)^{1/q}.
\end{align*}
As a consequence of the (classical) one-dimensional GRR inequality (see \cite[Corollary A.2 and A.3]{FV10}), we have that for all $\delta \in {]0,1[}$, $q \in {]1,\infty[}$ with $\delta-1/q > 0$, there exists a constant $C<\infty$ such that for all $x \in C[0,1]$ we have
\begin{align*}
\|x\|_{C^\alpha[s,t]} \le C \|x\|_{W^{\delta,q}[s,t]}
\end{align*}
and
\begin{align*}
\|x\|_{p\text{-var};[s,t]} \le C |t-s|^\alpha \|x\|_{W^{\delta,q}[s,t]},
\end{align*}
where $p=1/\delta$ and $\alpha=\delta-1/q$, and $\|x\|_{C^\alpha[s,t]}$ and $\|x\|_{p\text{-var};[s,t]}$ denote the H\"older and $p$-variation constants of $x$, restricted to $[s,t]$.

Fix $\kappa \ge 0$, and as before, let
\begin{equation*}
    v(t,y) = \int_0^y |\hat f_t'(iu)| \, du.
\end{equation*}
Recall the notation \eqref{eq:moment_estimate_parameters}, and let $\lambda=\lambda(r)$, $\zeta=\zeta(r)$ with some $r < r_c(\kappa)$.

The following result is proved similarly to \cref{thm:fv_hoelder_convergence}.
\begin{theorem}
Let $\kappa \neq 8$. Then for some $\alpha > 0$ and some $p < 1/\alpha$ there almost surely exists a continuous $\gamma: [0,1] \to \barH$ such that the function $t \mapsto \hat f_t(iy)$ converges in $C^\alpha$ and $p$-variation to $\gamma$ as $y \searrow 0$.

More precisely, let $\kappa \ge 0$ be arbitrary, $\zeta<2$ and $\delta \in {\left]0, \frac{\lambda+\zeta}{2\lambda} \right[}$. Then there exists a random measurable function $\gamma: [0,1] \to \barH$ such that
\begin{equation*}
    \ex\|v(\cdot,y)\|_{W^{\delta,\lambda}}^{\lambda} \le C y^{\lambda+\zeta-2\delta\lambda} \quad \text{and} \quad \ex\|\gamma-\hat f_\cdot(iy)\|_{W^{\delta,\lambda}}^{\lambda} \le C y^{\lambda+\zeta-2\delta\lambda}
\end{equation*}
for all $y \in {]0,1]}$, where $C$ is a constant that depends on $\kappa$, $r$, and $\delta$. Moreover, a.s. $\|v(\cdot,y)\|_{W^{\delta,\lambda}} \to 0$ and $\|\gamma-\hat f_\cdot(iy)\|_{W^{\delta,\lambda}} \to 0$ as $y \searrow 0$.

If additionally $\delta \in {\left]\frac{1}{\lambda}, \frac{\lambda+\zeta}{2\lambda} \right[}$, then the same is true for $\|\cdot\|_{1/\delta\text{-var}}$ and $\|\cdot\|_{C^\alpha}$ where $\alpha=\delta-1/\lambda$.
\end{theorem}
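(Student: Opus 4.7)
The plan is to mirror the strategy of \cref{thm:fv_hoelder_convergence} in the one-parameter (time-only) setting, replacing the joint Hölder argument there with a fractional Sobolev estimate. Throughout, fix $\kappa \neq 8$, $r < r_c(\kappa)$ with $\lambda := \lambda(r) \ge 1$ and $\zeta := \zeta(r) < 2$, and let $\delta$ be as in the statement.

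\textbf{Step 1 (two moment estimates).} Minkowski's inequality applied to $v(t,y) = \int_0^y |\hat f_t'(iu)|\,du$ and \cref{thm:moment_estimate} immediately yield the size bound
\[ \ex|v(t,y)|^\lambda \le \left( \int_0^y \bigl(\ex|\hat f_t'(iu)|^\lambda\bigr)^{1/\lambda}\,du \right)^\lambda \le C\, a(t)\, y^{\zeta+\lambda}. \]
For the increment in $t$, I expect the main technical step to be a Loewner comparison: conditioning on $\mathcal F_s$, $\hat f_t$ equals $\hat f_s$ post-composed with a Loewner flow for time $t-s$ driven by the shifted Brownian motion $\sqrt{\kappa}(B_{s+\cdot}-B_s)$, whose typical amplitude is $\sqrt{\kappa(t-s)}$. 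Combining this with Cauchy's formula for $\hat f_t'$ (in the spirit of \cref{thm:fprime_diffkappa_moment}) and another application of \cref{thm:moment_estimate} should produce
\[ \ex|\hat f_t'(iu)-\hat f_s'(iu)|^\lambda \le C(a(t)+a(s))\, u^{\zeta-\lambda}\, |t-s|^{\lambda/2}, \]
and hence, by Minkowski again,
\[ \ex|v(t,y)-v(s,y)|^\lambda \le C(a(t)+a(s))\, y^\zeta\, |t-s|^{\lambda/2}. \]

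\textbf{Step 2 (Sobolev seminorm).} By Fubini,
\[ \ex\|v(\cdot,y)\|_{W^{\delta,\lambda}}^\lambda = \int_0^1\!\!\int_0^1 \frac{\ex|v(t,y)-v(s,y)|^\lambda}{|t-s|^{1+\delta\lambda}}\,ds\,dt. \]
Splitting the domain at $|t-s|\asymp y^2$, I would use the size bound of order $a(t)\,y^{\zeta+\lambda}$ on $\{|t-s|\le y^2\}$ and the increment bound of order $(a(t)+a(s))\,y^\zeta|t-s|^{\lambda/2}$ on $\{|t-s|>y^2\}$; a direct computation, requiring $\delta<(\lambda+\zeta)/(2\lambda)$ for the $|t-s|$-integrability and $\zeta<2$ for the $t$-integrability of $a$, yields both contributions of order $y^{\lambda+\zeta-2\delta\lambda}$. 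Using the identity $\gamma(t)-\hat f_t(iy) = -i\int_0^y \hat f_t'(iu)\,du$ (valid since $\gamma(t) = \lim_{y\searrow 0}\hat f_t(iy)$), the same argument applied to the telescoped integrand gives the analogous bound on $\ex\|\gamma-\hat f_\cdot(iy)\|_{W^{\delta,\lambda}}^\lambda$.

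\textbf{Step 3 (a.s.\ convergence and Hölder/$p$-variation upgrade).} Chebyshev along $y_n=2^{-n}$ combined with Borel--Cantelli produces $\|v(\cdot,y_n)\|_{W^{\delta,\lambda}}\to 0$ and $\|\gamma-\hat f_\cdot(iy_n)\|_{W^{\delta,\lambda}}\to 0$ almost surely, and an analogous estimate on the increments $v(\cdot,y)-v(\cdot,y')$ extends the convergence to $y\searrow 0$. Finally, under the stronger assumption $\delta>1/\lambda$, the Sobolev-type inequalities $\|x\|_{C^\alpha}\lesssim\|x\|_{W^{\delta,\lambda}}$ and $\|x\|_{1/\delta\text{-var}}\lesssim\|x\|_{W^{\delta,\lambda}}$ recalled just above the theorem transfer both the moment estimates and the convergence to $C^\alpha$ with $\alpha=\delta-1/\lambda$ and to $p$-variation with $p=1/\delta$, completing the proof.
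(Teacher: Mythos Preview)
Your Step~2 contains a genuine gap. The splitting is backwards: using the size bound $\ex|v(t,y)-v(s,y)|^\lambda \lesssim y^{\zeta+\lambda}$ on $\{|t-s|\le y^2\}$ gives
\[
\iint_{|t-s|\le y^2}\frac{y^{\zeta+\lambda}}{|t-s|^{1+\delta\lambda}}\,ds\,dt \asymp y^{\zeta+\lambda}\int_0^{y^2} r^{-1-\delta\lambda}\,dr = \infty,
\]
so the near-diagonal piece diverges. Swapping the two bounds (increment bound on $|t-s|\le y^2$, size bound on $|t-s|>y^2$) removes this divergence, but then your increment bound $\ex|v(t,y)-v(s,y)|^\lambda \lesssim y^\zeta|t-s|^{\lambda/2}$ forces $\lambda/2>\delta\lambda$, i.e.\ $\delta<1/2$. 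For $\zeta>0$ the theorem covers the strictly larger interval $\delta\in\bigl(0,\tfrac{\zeta+\lambda}{2\lambda}\bigr)$, and the last part of the statement (H\"older and $1/\delta$-variation) needs $\delta$ close to $\tfrac{\zeta+\lambda}{2\lambda}$ precisely when $\kappa$ is near $8$.

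The missing idea is exactly the one the paper isolates in Step~2 of the proof of \cref{thm:fv_hoelder_convergence}: the pointwise-in-$u$ estimate $\ex|\hat f_t'(iu)-\hat f_s'(iu)|^\lambda \lesssim u^{\zeta-\lambda}|t-s|^{\lambda/2}$ (which is what your Step~1 produces) only yields the exponent $|t-s|^{\lambda/2}$ after Minkowski, whereas one needs $|t-s|^{(\zeta+\lambda)/2}$. To get this, split the $u$-integral at $u=|t-s|^{1/2}$ and treat the three pieces $A_{11},A_{12},A_{13}$ separately. The first two give $|t-s|^{(\zeta+\lambda)/2}$ directly (see the Appendix computation). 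The Brownian piece $A_{13}$ is the delicate one: integrating $u^{-1}|\hat f_s'(iu)|\,|B_t-B_s|$ over all of $[|t-s|^{1/2},y]$ brings back $y^\zeta|t-s|^{\lambda/2}$; the paper restricts $A_{13}$ to $u\in[|t-s|^{1/2},2|t-s|^{1/2}]$ and recovers the full range of $u$ via the dyadic telescoping that underlies \cref{le:grr_relaxed}. Your plan should incorporate this decomposition rather than a single pointwise bound followed by Minkowski.
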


\begin{remark}
The conditions for the exponents are the same as in \cite{FT17}. In particular, the result applies to the (for \slek{}) optimal $p$-variation and H\"older exponents.
\end{remark}

\begin{proof}[Proof of \cref{thm:fv_hoelder_convergence}]
We use the same setting as in the proof of \cref{thm:gamma_existence_hoelder}. For $\kappa \le \kappa_+ < 8/3$, we choose $p \in [1, 1+\frac{8}{\kappa_+}[$, $r_\kappa < r_c(\kappa)$, $\lambda(\kappa,r_\kappa) = \lambda \ge 1$, and the corresponding $\zeta_\kappa = \zeta(\kappa,r_\kappa)$ as in the proof of \cref{thm:gamma_existence_hoelder}. Again, we assume that the interval $[\kappa_-,\kappa_+]$ is small enough so that $\lambda(\kappa)$ and $\zeta(\kappa)$ are almost constant.

\textbf{Step 1.} We would like to show that $v$ and $\hat f$ (defined above) are Cauchy sequences in the aforementioned Hölder space as $y \searrow 0$. Therefore we will take differences $|v(\cdot,\cdot,y_1)-v(\cdot,\cdot,y_2)|$ and $|\hat f(iy_1)-\hat f(iy_2)|$, and estimate their Hölder norms with our GRR lemma. Note that it is not a priori clear that $v(t,\kappa,y)$ is continuous in $(t,\kappa)$, but $|v(t,\kappa,y_1)-v(t,\kappa,y_2)| = \int_{y_1}^{y_2} |(\hat f^\kappa_t)'(iu)| \, du$ certainly is, so the GRR lemma can be applied to this function.

Consider the function
\begin{equation*}
    G(t,\kappa) := v(t,\kappa,y)-v(t,\kappa,y_1) = \int_{y_1}^y |(\hat f^\kappa_t)'(iu)| \, du.
\end{equation*}

The strategy will be to show that the condition of \cref{thm:grr} is satisfied almost surely for $G$. As in the proof of Kolmogorov's continuity theorem, we do this by showing that the expectation of the integrals \eqref{eq:grr_condA}, \eqref{eq:grr_condB} are finite (after defining suitable $A_{1j}$, $A_{2j}$) and converge to $0$ as $y \searrow 0$. In particular, they are almost surely finite, so \cref{thm:grr} then implies that $G$ is Hölder continuous, with Hölder constant bounded in terms of the integrals \eqref{eq:grr_condA}, \eqref{eq:grr_condB}.

We would like to infer that almost surely the functions $v(\cdot,\cdot,y)$, $y>0$, form a Cauchy sequence in the Hölder space $C^{\alpha,\eta}$. But this is not immediately clear, therefore we will bound the integrals \eqref{eq:grr_condA}, \eqref{eq:grr_condB} by expressions that are decreasing in $y$. We will also define $A_{1j}$, $A_{2j}$ here.

In order to do so, we estimate
\begin{align*}
&|G(t,\kappa)-G(s,\tilde\kappa)| \\
&\quad \le \int_0^y \left| |(\hat f^\kappa_t)'(iu)|-|(\hat f^\kappa_s)'(iu)| \right| \, du + \int_0^y \left| |(\hat f^\kappa_s)'(iu)|-|(\hat f^{\tilde\kappa}_s)'(iu)| \right| \, du\\
&\quad \le \int_0^y |(\hat f^\kappa_t)'(iu)-(\hat f^\kappa_s)'(iu)| \, du + \int_0^y |(\hat f^\kappa_s)'(iu)-(\hat f^{\tilde\kappa}_s)'(iu)| \, du\\
&\quad =: A_{1*}(t,s;\kappa)+A_{2*}(s;\kappa,\tilde\kappa),
\end{align*}

Moreover, the function $\hat G(t,\kappa) := \hat f^\kappa_t(iy)-\hat f^\kappa_t(iy_1)$ also satisfies
\begin{equation*}
|\hat G(t,\kappa)-\hat G(s,\tilde\kappa)| \le A_{1*}(t,s;\kappa)+A_{2*}(s;\kappa,\tilde\kappa).
\end{equation*}
Therefore all our considerations for $G$ apply also to $\hat G$.

We want to estimate the difference $|(\hat f^\kappa_s)'(iu)-(\hat f^{\tilde\kappa}_s)'(iu)|$ differently for small and large $u$ (relatively to $|\Delta\kappa|$), therefore we we split $A_{2*}$ into
\begin{align*}
A_{2*}(s;\kappa,\tilde\kappa) 
&= \int_0^{y \wedge |\kappa-\tilde\kappa|^{p/(\zeta+\lambda)}} |(\hat f^\kappa_s)'(iu)-(\hat f^{\tilde\kappa}_s)'(iu)| \, du \\
&\qquad + \int_{y \wedge |\kappa-\tilde\kappa|^{p/(\zeta+\lambda)}}^y |(\hat f^\kappa_s)'(iu)-(\hat f^{\tilde\kappa}_s)'(iu)| \, du\\
&=: A_{21}(s;\kappa,\tilde\kappa)\\
&\qquad +A_{22}(s;\kappa,\tilde\kappa).
\end{align*}

We would like to apply \cref{thm:grr} with these choices of $A_{1*}, A_{21}, A_{22}$. We denote the integrals \eqref{eq:grr_condA}, \eqref{eq:grr_condB} by
\begin{align*}
    M_{1*} &\defeq \iiint \frac{|A_{1*}(t,s;\kappa)|^\lambda}{|t-s|^{\beta_1}} \, ds \, dt \, d\kappa,\\
    M_{21} &\defeq \iiint \frac{|A_{21}(s;\kappa,\tilde\kappa)|^\lambda}{|\kappa-\tilde\kappa|^{\beta_2}} \, ds \, d\kappa \, d\tilde\kappa,\\
    M_{22} &\defeq \iiint \frac{|A_{22}(s;\kappa,\tilde\kappa)|^p}{|\kappa-\tilde\kappa|^{\beta_2}} \, ds \, d\kappa \, d\tilde\kappa.
\end{align*}
Suppose that we can show that
\[ \ex[M_{1*}] \lesssim y^r, \quad \ex[M_{2j}] \lesssim y^r  \]
for some $r>0$. This would imply that they are almost surely finite, and that $G$ and $\hat G$ are Hölder continuous with $\norm{G}_{C^{\alpha,\eta}} \lesssim M_{A*}^{1/\lambda}+M_{21}^{1/\lambda}+M_{22}^{1/p}$ (same for $\hat G$).

Notice that now $A_{1*}, A_{21}, A_{22}$, hence also $M_{A*}, M_{21}, M_{22}$ are decreasing in $y$. So as we let $y,y_1 \searrow 0$, it would follow that
\begin{itemize}
    \item $\ex[\norm{G}_{C^{\alpha,\eta}}^\lambda] \lesssim y^{r'} \to 0$ (same for $\hat G$) with a (possibly) different $r'>0$. In particular, as $y \searrow 0$, the random functions $v(\cdot,\cdot,y)$ and $(t,\kappa) \mapsto \hat f^\kappa_t(iy)$ form Cauchy sequences in $L^\lambda(\pr;C^{\alpha,\eta})$, and it follows that also $\ex[\norm{v(\cdot,\cdot,y)}_{C^{\alpha,\eta}}^\lambda] \lesssim y^{r'} \to 0$ and $\ex[\norm{\gamma(\cdot,\cdot) - \hat f^\cdot_\cdot(iy)}_{C^{\alpha,\eta}}^\lambda] \lesssim y^{r'} \to 0$ as $y \searrow 0$.
    \item By the monotonicity of $M_{A*}, M_{21}, M_{22}$ in $y$ we have that almost surely the functions $v(\cdot,\cdot,y)$ and $(t,\kappa) \mapsto \hat f^\kappa_t(iy)$ are Cauchy sequences in the Hölder space $C^{\alpha,\eta}$.
\end{itemize}
This will show \cref{thm:fv_hoelder_convergence}.

\textbf{Step 2.} We now explain that in fact, our definition of $A_{1*}$ does not always suffice, and we need to define $A_{1j}$ a bit differently in order to get the best estimates. The new definition of $A_{1j}$ will satisfy only the relaxed condition \eqref{e:G_assump_weaker} (instead of \eqref{e:G_assump}).

The reason is that, when $|t-s| \le u^2$, $|\hat f_t(iu)-\hat f_s(iu)|$ is estimated by an expression like $|\hat f_s'(iu)| |B_t-B_s|$ which is of the order $O(|t-s|^{1/2})$. The same is true for the difference $|\hat f_t'(iu)-\hat f_s'(iu)|$ (see \eqref{eq:hatfprime_difftime} below). When we carry out the moment estimate for our choice of $A_{1*}$, then we will get 
\[ \ex|A_{1*}(t,s;\kappa)|^\lambda = O(|t-s|^{\lambda/2}) .\]
But recall from \cref{thm:sle_diff_moments} that 
\[ \ex|\gamma(t)-\gamma(s)|^\lambda \le C|t-s|^{(\zeta+\lambda)/2} ,\]
which has allowed us to apply \cref{thm:grr} with $\beta_1 \approx \frac{\zeta+\lambda}{2}+1$ in the proof of \cref{thm:gamma_existence_hoelder}. When $\zeta>0$, this was better than just $\lambda/2$.

To fix this, we need to adjust our choice of $A_{1j}$. In particular, we should not evaluate $\ex|\hat f_t'(iu)-\hat f_s'(iu)|^\lambda$ when $u \gg |t-s|^{1/2}$ (here ``$\gg$'' means ``much larger''). As observed in \cite{JVL11}, $|\hat f_s'(iu)|$ does not change much in time when $u \gg |t-s|^{1/2}$. More precisely, we have the following results.
\begin{lemma}
    Let $(g_t)$ be a chordal Loewner chain driven by $U$, and $\hat f_t(z) = g_t^{-1}(z+U(t))$. Then, if $t,s \ge 0$ and $z=x+iy \in \HH$ such that $|t-s| \le C' y^2$, we have
    \begin{align}
        |\hat f_t'(z)| &\le C|\hat f_s'(z)| \left( 1+\frac{|U(t)-U(s)|^2}{y^2} \right)^l, \label{eq:hatfprime_changetime}\\
        |\hat f_t(z)-\hat f_s(z)| &\le C |\hat f_s'(z)| \left( \frac{|t-s|}{y} + |U(t)-U(s)| \left( 1+\frac{|U(t)-U(s)|^2}{y^2} \right)^l \right), \label{eq:hatf_difftime}\\
        |\hat f_t'(z)-\hat f_s'(z)| &\le C |\hat f_s'(z)| \left( \frac{|t-s|}{y^2} + \frac{|U(t)-U(s)|}{y} \left( 1+\frac{|U(t)-U(s)|^2}{y^2} \right)^l \right), \label{eq:hatfprime_difftime}
    \end{align}
    where $C < \infty$ depends on $C' < \infty$, and $l < \infty$ is a universal constant.
\end{lemma}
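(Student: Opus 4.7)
The plan is to prove all three estimates simultaneously from a single semigroup decomposition together with the Koebe distortion theorem. Fix $s < t$ and set $r = t - s$. By the flow identity $g_t = h_r \circ g_s$, where $h_\rho$ is the Loewner chain driven by $V(\rho) \defeq U(s+\rho)$, a direct computation gives the factorisation
\[
\hat f_t(z) = \hat f_s(\phi(z)), \qquad \phi(z) \defeq \hat h_r(z) - U(s),
\]
where $\hat h_\rho(w) = h_\rho^{-1}(w + V(\rho))$ is the centred inverse of $h$. Thus $\phi : \HH \to \HH$ is itself a (centred inverse) Loewner map driven by $W(\rho) \defeq U(s+\rho) - U(s)$, with in particular $W(r) = U(t) - U(s)$. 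The chain rule then yields $\hat f_t'(z) = \hat f_s'(\phi(z))\,\phi'(z)$, and the whole question reduces to (i) controlling $\phi(z)$ and $\phi'(z)$ in terms of $r, y, W$, and (ii) transferring these bounds through $\hat f_s$ via distortion.

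The first step is the estimates on $\phi$. Using the backward Loewner ODE for $\phi$ on the short interval $[0,r]$ with $r \le C' y^2$, one shows that the imaginary part of the flow started at $z = x + iy$ stays comparable to $y$, and one integrates the ODE to obtain
\[
\bigl|\phi(z) - (z + W(r))\bigr| \le C\,\tfrac{r}{y}, \qquad |\phi'(z) - 1| \le C\bigl(\tfrac{r}{y^2} + \tfrac{|W(r)|}{y}\bigr), \qquad \operatorname{Im}\phi(z) \ge y/C.
\]
The point is that the additive shift by $W(r)$ is explicit, while the nonlinear flow correction is controlled purely by $r/y$ (resp.\ $r/y^2$) and is small compared to $y$ because of the standing assumption $r \le C' y^2$. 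The second step is the classical Koebe distortion for the univalent map $\hat f_s : \HH \to \HH_s$: for any $z_1, z_2 \in \HH$,
\[
\frac{|\hat f_s'(z_2)|}{|\hat f_s'(z_1)|} \le C\Bigl(1 + \tfrac{|z_1-z_2|^2}{\operatorname{Im}(z_1)\operatorname{Im}(z_2)}\Bigr)^l,
\]
with an analogous bound for $|\hat f_s''|$, which produces exactly the factor $(1 + |U(t)-U(s)|^2/y^2)^l$.

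Putting the two together gives the three inequalities. For \eqref{eq:hatfprime_changetime}, apply the distortion bound with $z_1 = z$, $z_2 = \phi(z)$ and absorb $|\phi'(z)| \le C$. For \eqref{eq:hatf_difftime}, split
\[
\hat f_t(z) - \hat f_s(z) = \bigl[\hat f_s(\phi(z)) - \hat f_s(z + W(r))\bigr] + \bigl[\hat f_s(z + W(r)) - \hat f_s(z)\bigr]
\]
and integrate $\hat f_s'$ along straight-line paths: the first bracket uses $|\phi(z)-(z+W(r))| \le Cr/y \le C' y$, where distortion is $O(1)$ and gives the $r/y$ term without the extra factor; the second bracket uses $|W(r)|$ together with the distortion factor since the path from $z$ to $z+W(r)$ can be long. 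For \eqref{eq:hatfprime_difftime}, write
\[
\hat f_t'(z) - \hat f_s'(z) = \hat f_s'(\phi(z))\bigl(\phi'(z)-1\bigr) + \bigl(\hat f_s'(\phi(z)) - \hat f_s'(z)\bigr),
\]
and estimate the second bracket by integrating $\hat f_s''$ along the same path (or via the Cauchy integral formula on a circle of radius $y/2$ around $z$), which produces $r/y^2$ and $|W(r)|/y$ with the distortion factor.

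The main technical obstacle is Step (i), namely the ODE estimates on $\phi$, because the flow is driven by the continuous but otherwise unrestricted function $W$ on $[0,r]$ and one must carefully argue that $\operatorname{Im}\phi_\rho(z)$ does not drop appreciably below $y$ under $r \le C' y^2$; once that is in hand, $|\phi(z)-z-W(r)|$ and $|\phi'(z)-1|$ follow from Grönwall-type estimates applied to the inverse Loewner ODE. All the remaining ingredients (chain rule, Koebe distortion, path integration of $\hat f_s'$ and $\hat f_s''$) are standard and combine cleanly into the stated form.
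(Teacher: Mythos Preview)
Your proposal is correct and follows essentially the same route as the paper. The paper's proof simply cites \cite[Lemmas 3.2 and 3.5]{JVL11} for \eqref{eq:hatfprime_changetime} and \eqref{eq:hatf_difftime}; those lemmas are proved via exactly the semigroup factorisation $\hat f_t = \hat f_s \circ \phi$ together with Koebe distortion that you describe. For \eqref{eq:hatfprime_difftime} the paper applies the Cauchy integral formula on the circle of radius $y/2$ to pass from \eqref{eq:hatf_difftime} to its derivative version --- the alternative you mention in parentheses --- whereas your primary route integrates $\hat f_s''$ along a path, which is equivalent since the Cauchy/Koebe bound $|\hat f_s''(w)| \lesssim |\hat f_s'(w)|/\operatorname{Im} w$ is what one uses either way.

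Two small remarks on your Step~(i). First, in the reverse-flow representation the imaginary part is monotone increasing, so $\operatorname{Im}\phi(z) \ge y$ automatically; there is no risk of ``dropping below $y$''. Second, the bound you state, $|\phi'(z)-1| \le C(r/y^2 + |W(r)|/y)$, is true but the second term is superfluous: the reverse-flow ODE gives $|\phi'(z)-1| \le C r/y^2$ directly. Neither point affects the argument. Also, in the splitting for \eqref{eq:hatf_difftime} your ``distortion is $O(1)$'' for the first bracket is relative to $\hat f_s'(z+W(r))$, not $\hat f_s'(z)$; passing back to $\hat f_s'(z)$ produces an extra factor $(1+|W(r)|^2/y^2)^l$ on the $r/y$ term, but since $r/y \le C'y$ this is absorbed into the second term and the stated inequality follows.
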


\begin{proof}
The first two inequalities \eqref{eq:hatfprime_changetime} and \eqref{eq:hatf_difftime} follow from \cite[Lemma 3.5 and 3.2]{JVL11}. The third inequality \eqref{eq:hatfprime_difftime} follows from \eqref{eq:hatf_difftime} by the Cauchy integral formula in the same way as in \cref{thm:fprime_diffkappa_moment}. Note that for $z \in \HH$ and $w$ on a circle of radius $y/2$ around $z$, we have $|\hat f_s'(w)| \le 12 |\hat f_s'(z)|$ by the Koebe distortion theorem.
\end{proof}

We now redefine $A_{1j}$. Let
\begin{align*}
A_{11}(t,s;\kappa) &= \int_0^{y \wedge |t-s|^{1/2}} |\hat f_t'(iu)-\hat f_s'(iu)| \, du,\\
A_{12}(t,s;\kappa) &= \int_{y \wedge |t-s|^{1/2}}^y \frac{|t-s|}{u^2} |\hat f_s'(iu)| \, du,\\
A_{13}(t,s;\kappa) &= \int_{y \wedge |t-s|^{1/2}}^{y \wedge 2|t-s|^{1/2}} u^{-1} |\hat f_s'(iu)| \left(1+\norm{B}_{C^{1/2^{(-)}}}\right)^{2l+1} |t-s|^{1/2^{(-)}} \, du,
\end{align*}
for $s \le t$, where the exponents $1/2^{(-)} < 1/2$ denote some numbers that we can pick arbitrarily close to $1/2$. (Of course, $\hat f_t$ still depends on $\kappa$, but for convenience we do not write it for now.)

Note that the integrands in $A_{12}$ and $A_{13}$ just make fancy bounds of 
\[ |\hat f_t'(iu)-\hat f_s'(iu)| ,\]
according to \eqref{eq:hatfprime_difftime}. But now, in $A_{13}$ we are not integrating up to $y$ any more. Thus, the condition \eqref{e:G_assump} is not satisfied any more. But the relaxed condition \eqref{e:G_assump_weaker} of \cref{le:grr_relaxed} is still satisfied. Indeed, by \eqref{eq:hatfprime_difftime},
\[ \begin{split}
A_{1*}(t,s;\kappa) &\le A_{11}(t,s;\kappa) + \int_{y \wedge \abs{t-s}^{1/2}}^y |\hat f_t'(iu)-\hat f_s'(iu)| \, du \\
&\le A_{11}(t,s;\kappa) + A_{12}(t,s;\kappa) \\
&\quad + \int_{y \wedge \abs{t-s}^{1/2}}^y u^{-1} |\hat f_s'(iu)| \left(1+\norm{B}_{C^{1/2^{(-)}}}\right)^{l+1} |t-s|^{1/2^{(-)}} \, du
\end{split} \]
where by \eqref{eq:hatfprime_changetime}
\[ \begin{split}
&\int_{y \wedge \abs{t-s}^{1/2}}^y u^{-1} |\hat f_s'(iu)| \left(1+\norm{B}_{C^{1/2^{(-)}}}\right)^{l+1} |t-s|^{1/2^{(-)}} \, du \\
&\quad = \sum_{k=0}^{\lfloor \log_4(y^2/\abs{t-s}) \rfloor} \int_{y \wedge (4^k\abs{t-s})^{1/2}}^{y \wedge 2(4^k\abs{t-s})^{1/2}} ... \\
&\quad = \sum_{k=0}^{\lfloor \log_4(y^2/\abs{t-s}) \rfloor} 4^{-k(1/2^{(-)})} \abs{A_{13}(t_1+4^k(t-t_1),t_1+4^k(s-t_1);\kappa)}
\end{split} \]
whenever $\abs{s-t_1} \le 2\abs{t-s}$ (implying $\abs{s-(t_1+4^k(s-t_1))} \le (4^k-1)2\abs{t-s} \le 2u^2$).

Finally, with this definition of $A_{13}$, we truly have $\ex |A_{13}(t,s;\kappa)|^{\lambda^{(-)}} = O(|t-s|^{(\zeta+\lambda)^{(-)}/2})$ and not just $O(|t-s|^{\lambda/2})$; here $\lambda^{(-)} < \lambda$ is an exponent that can be chosen arbitrarily close to $\lambda$.
\begin{proposition}\label{thm:grr_integrals_expectation}
With the above notation and assumptions, if $1 < \beta_1 < \frac{\zeta+\lambda}{2}+1$, $1 < \beta_2 < p+1$, we have
\begin{align*}
\ex \iiint \frac{|A_{1j}(t,s;\kappa)|^\lambda}{|t-s|^{\beta_1}} \, ds \, dt \, d\kappa &\le C y^{\zeta+\lambda-2\beta_1+2} \iint a(s,\zeta_\kappa) \, ds \, d\kappa, \quad j=1,2,\\
\ex \iiint \frac{|A_{13}(t,s;\kappa)|^{\lambda^{(-)}}}{|t-s|^{\beta_1}} \, ds \, dt \, d\kappa &\le C y^{(\zeta+\lambda)^{(-)} -2\beta_1+2} \iint a(s,\zeta_\kappa)^{1^{(-)}} \, ds \, d\kappa,\\
\ex \iiint \frac{|A_{21}(s;\kappa,\tilde\kappa)|^\lambda}{|\kappa-\tilde\kappa|^{\beta_2}} \, ds \, d\kappa \, d\tilde\kappa &\le C y^{(\zeta+\lambda)(p-\beta_2+1)/p} \iint a(s,\zeta_\kappa) \, ds \, d\kappa,\\
\ex \iiint \frac{|A_{22}(s;\kappa,\tilde\kappa)|^p}{|\kappa-\tilde\kappa|^{\beta_2}} \, ds \, d\kappa \, d\tilde\kappa &\le C y^{(\zeta+\lambda)(p-\beta_2+1)/p},
\end{align*}
where $C$ depends on $\kappa_-$, $\kappa_+$, $\lambda$, $p$, $\beta_1$, $\beta_2$.
\end{proposition}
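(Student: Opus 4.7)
The strategy is uniform across all five estimates: by Fubini pull the expectation inside the $(s,t,\kappa,\tilde\kappa)$-integral, apply Minkowski's integral inequality to bring the $L^\lambda(\pr)$ or $L^p(\pr)$ norm past the defining $u$-integral of each $A_{ij}$, bound the resulting integrand using \cref{thm:moment_estimate} (for $\abs{\hat f_s'}$) or \cref{thm:fprime_diffkappa_moment} (for the $\kappa$-increment of $(\hat f_s)'$), and then evaluate the remaining deterministic integrals. The $(s,t)$- or $(\kappa,\tilde\kappa)$-domain is split at the natural cutoffs $\abs{t-s}=y^2$, respectively $\abs{\kappa-\tilde\kappa}=y^{(\zeta+\lambda)/p}$, dictated by the construction of the $A_{ij}$.

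For $A_{11}$ bound $\abs{(\hat f^\kappa_t)'(iu)-(\hat f^\kappa_s)'(iu)} \le \abs{(\hat f^\kappa_t)'(iu)}+\abs{(\hat f^\kappa_s)'(iu)}$; Minkowski and \cref{thm:moment_estimate} give
\[
(\ex\abs{A_{11}}^\lambda)^{1/\lambda} \lesssim (a(t,\zeta_\kappa)+a(s,\zeta_\kappa))^{1/\lambda}\,(y\wedge\abs{t-s}^{1/2})^{1+\zeta_\kappa/\lambda}.
\]
For $A_{12}$, whose integrand already carries the explicit factor $\abs{t-s}/u^2$, Minkowski together with \cref{thm:moment_estimate} yields the same bound (the inner integral $\int_{\abs{t-s}^{1/2}}^y u^{\zeta/\lambda-2}\,du$ is dominated by its lower endpoint because $\zeta/\lambda<1$). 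Raising to the $\lambda$-th power, dividing by $\abs{t-s}^{\beta_1}$ and splitting at $\abs{t-s}=y^2$ produces $y^{\zeta+\lambda-2\beta_1+2}$ in both regimes: the near-diagonal part is finite since $\beta_1<(\zeta+\lambda)/2+1$, the far part since $\beta_1>1$; the factor $a(s,\zeta_\kappa)$ integrates to a constant because $\zeta_\kappa<2$.

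For $A_{13}$, the explicit factor $(1+\norm{B}_{C^{1/2^{(-)}}})^{2l+1}$ is decoupled by one application of Hölder's inequality with conjugate exponents $q,q'$ chosen so that $\lambda^{(*)} := \lambda^{(-)} q'$ remains admissible in \cref{thm:moment_estimate}. Since $B$ has Gaussian Hölder moments, its contribution is an absolute constant, and a second Minkowski step combined with \cref{thm:moment_estimate} yields
\[
\ex\abs{A_{13}}^{\lambda^{(-)}} \lesssim a(s,\zeta_\kappa)^{1^{(-)}}\,\abs{t-s}^{(\zeta+\lambda)^{(-)}/2},
\]
where the $(-)$-superscripted quantities can be chosen arbitrarily close to $\lambda,\zeta,1$ respectively. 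Since $A_{13}$ vanishes whenever $\abs{t-s}\ge y^2$, only the near-diagonal integration contributes, giving $y^{(\zeta+\lambda)^{(-)}-2\beta_1+2}$.

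The $A_{2j}$ estimates proceed analogously with the roles of $t$ and $\kappa$ swapped. For $A_{21}$ we use the triangle bound $\abs{(\hat f_s^\kappa)'(iu)-(\hat f_s^{\tilde\kappa})'(iu)} \le \abs{(\hat f_s^\kappa)'(iu)}+\abs{(\hat f_s^{\tilde\kappa})'(iu)}$ and \cref{thm:moment_estimate}; splitting the $(\kappa,\tilde\kappa)$-integral at $\abs{\kappa-\tilde\kappa}=y^{(\zeta+\lambda)/p}$ yields $y^{(\zeta+\lambda)(p-\beta_2+1)/p}$ in both regimes (the cutoff is chosen precisely so the two sides match). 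For $A_{22}$, \cref{thm:fprime_diffkappa_moment} (for $p<1+8/\kappa_+$, and an $\varepsilon$ small enough) gives $(\ex\abs{(\hat f_s^\kappa)'(iu)-(\hat f_s^{\tilde\kappa})'(iu)}^p)^{1/p}\lesssim \abs{\sqrt\kappa-\sqrt{\tilde\kappa}}\,u^{-1}$, so Minkowski produces a logarithmic factor $\log(y/\abs{\kappa-\tilde\kappa}^{p/(\zeta+\lambda)})$. The main subtlety here is precisely this log: after the substitution $v=\abs{\kappa-\tilde\kappa}/y^{(\zeta+\lambda)/p}$ in the outer integral, the log is absorbed into a finite constant via $\int_0^1 v^{p-\beta_2}(\log 1/v)^p\,dv<\infty$, which holds since $\beta_2<p+1$. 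The principal obstacle is the bookkeeping in the $A_{13}$ case, where one must fine-tune three exponents (the Hölder index for $B$, the conjugate pair $q,q'$, and the reduced $\lambda^{(-)}$) simultaneously so that $\lambda^{(*)}<\lambda(r_c)$ while the recovered exponent on $\abs{t-s}$ approaches $(\zeta+\lambda)/2$ from below — which is exactly why the $(-)$-superscripts appear in the proposition's statement.
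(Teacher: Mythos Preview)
Your argument is correct and essentially matches the paper's proof: Fubini + Minkowski + \cref{thm:moment_estimate} for $A_{11},A_{12},A_{21}$, H\"older to strip off the Brownian factor in $A_{13}$, and \cref{thm:fprime_diffkappa_moment} for $A_{22}$, with the same splitting points $\abs{t-s}=y^2$ and $\abs{\kappa-\tilde\kappa}=y^{(\zeta+\lambda)/p}$. The only divergence is in $A_{22}$: the paper introduces an auxiliary weight $u^b$ and applies H\"older in the $u$-integral to avoid your logarithm (and explicitly remarks that the straight Minkowski route you take is ``still sufficient''); both methods yield the same power $y^{(\zeta+\lambda)(p-\beta_2+1)/p}$.
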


\begin{proof}
These follow from direct computations making use of \cref{thm:moment_estimate} and \cref{thm:fprime_diffkappa_moment}. They can be found in the appendix of the arXiv version of this paper.
\end{proof}

Recall that the condition for \cref{thm:grr} is $(\beta_1-2)(\beta_2-2)-1 > 0$. With $\beta_1 < \frac{\lambda+\zeta}{2}+1$, $\beta_2 < p+1$ this is again the condition $(\frac{\zeta+\lambda}{2})^{-1}+p^{-1} < 1$, which leads to $\kappa < \frac{8}{3}$. Moreover, we need the additional condition $\frac{\beta_1-2}{\lambda} < 1/2^{(-)}$ for \cref{le:grr_relaxed}, which is implied by $\zeta < 2$.

The same analysis of $\lambda$ and $\zeta$ as in the proof of \cref{thm:gamma_existence_hoelder} applies here. This finishes the proof of \cref{thm:fv_hoelder_convergence}. 
\end{proof}


\section{Proof of \cref{thm:f_diffkappa_moment}}
\label{sec:f_diffkappa_moment_pf}

The proof is based on the methods of \cite{Law09,JVRW14}.

Let $t \ge 0$ and $U \in C([0,t];\RR)$. We study the chordal Loewner chain $(g_s)_{s \in [0,t]}$ in $\HH$ driven by $U$, i.e. the solution of \eqref{eq:loewner}. Let $V(s) = U(t-s)-U(t)$, $s \in [0,t]$, and consider the solution of the reverse flow
\begin{equation}\label{eq:loewner_reverse_2}
    \partial_s h_s(z) = \frac{-2}{h_s(z)-V(s)}, \quad h_0(z) = z.
\end{equation}
The Loewner equation implies $h_t(z) = g_t^{-1}(z+U(t))-U(t) = \hat f_t(z)-U(t)$.

Let $x_s + iy_s = z_s = z_s(z) = h_s(z)-V(s)$. Recall that
\begin{equation*}
    \partial_s \log |h_s'(z)| = 2 \frac{x_s^2-y_s^2}{(x_s^2+y_s^2)^2}
\end{equation*}
and therefore
\begin{equation*}
    |h_s'(z)| = \exp\left( 2 \int_0^s \frac{x_\vartheta^2-y_\vartheta^2}{(x_\vartheta^2+y_\vartheta^2)^2} \, d\vartheta \right).
\end{equation*}

For $r \in [0,t]$, denote by $h_{r,s}$ the reverse Loewner flow driven by $V(s)-V(r)$, $s \in [r,t]$. More specifically,
\begin{align*}
    \partial_s (h_{r,s}(z_r(z))+V(r)) &= \frac{-2}{(h_{r,s}(z_r(z))+V(r))-V(s)},\\
    h_{r,r}(z_r(z))+V(r) &= z_r(z)+V(r) = h_r(z),
\end{align*}
which implies from (\ref{eq:loewner_reverse_2}) that
\begin{alignat*}{2}
    && h_{r,s}(z_r(z))+V(r) &= h_s(z)\\
    &\text{and}& z_{r,s}(z_r(z)) &= z_s(z) \quad \text{for all } s \in [r,t].
\end{alignat*}

\noindent This implies also
\begin{equation*}
    |h_{r,s}'(z_r(z))| = \exp\left( 2 \int_r^s \frac{x_\vartheta^2-y_\vartheta^2}{(x_\vartheta^2+y_\vartheta^2)^2} \, d\vartheta \right).
\end{equation*}

The following result is essentially \cite[Lemma 2.3]{JVRW14}, stated in a more refined way.
\begin{lemma}\label{thm:h_diff}
    Let $V^1, V^2 \in C([0,t];\RR)$, and denote by $(h^j_s)$ the reverse Loewner flow driven by $V^j$, $j=1,2$, respectively. For $z=x+iy$, denoting $x^j_s + iy^j_s = z^j_s = h^j_s(z)-V^j(s)$, we have
    \begin{multline*}
        |h^1_t(z)-h^2_t(z)| \\
        \le 2(y^2+4t)^{1/4} \int_0^t |V^1(s)-V^2(s)| \frac{1}{|z^1_s z^2_s|} \frac{1}{(y^1_s y^2_s)^{1/4}} |(h^1_{s,t})'(z^1_s) (h^2_{s,t})'(z^2_s)|^{1/4} \, ds.
    \end{multline*}
\end{lemma}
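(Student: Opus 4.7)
The plan is to set up a linear complex ODE for the difference $D_s := h^1_s(z) - h^2_s(z)$ and solve it explicitly by variation of constants, then work carefully on the modulus of the integrating factor. Using \eqref{eq:loewner_reverse_2} for each of $h^1, h^2$, one finds
\[
    \partial_s D_s \;=\; \frac{-2}{z^1_s} + \frac{2}{z^2_s} \;=\; \frac{2\,D_s}{z^1_s z^2_s} \;-\; \frac{2\,(V^1(s)-V^2(s))}{z^1_s z^2_s} .
\]
Since $D_0 = 0$, variation of constants gives
\[
    D_t \;=\; \int_0^t \exp\!\left(\int_s^t \frac{2}{z^1_\vartheta z^2_\vartheta}\,d\vartheta\right) \frac{-2\,(V^1(s)-V^2(s))}{z^1_s z^2_s}\,ds ,
\]
so after taking moduli (which replaces the exponent by its real part) the whole task reduces to bounding $\exp\!\bigl(\int_s^t \Re(2/(z^1_\vartheta z^2_\vartheta))\,d\vartheta\bigr)$ by a product of factors $|(h^j_{s,t})'(z^j_s)|^{1/4}$ and powers of $y^j_\cdot$.

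The main technical step — and I expect this to be the real obstacle — is a pointwise inequality that turns the mixed quantity $\Re\bigl(2/(z^1 z^2)\bigr)$ into a sum of quantities attached to each flow separately. Writing $z^j = x^j+iy^j$, a short computation gives $\Re(2/(z^1 z^2)) = 2(x^1 x^2 - y^1 y^2)/(|z^1|^2|z^2|^2)$, and I claim
\[
    \Re\!\left(\frac{2}{z^1 z^2}\right) \;\le\; \frac{(x^1)^2}{|z^1|^4} + \frac{(x^2)^2}{|z^2|^4} .
\]
When $x^1 x^2 \le y^1 y^2$ the left-hand side is nonpositive while the right-hand side is nonnegative; otherwise $x^1 x^2 > 0$, and clearing denominators reduces the claim to $2 x^1 x^2 |z^1|^2|z^2|^2 \le (x^1)^2|z^2|^4 + (x^2)^2|z^1|^4$, which is AM--GM on the two positive terms.

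To close the argument, I would combine this with the two standard identities that follow from \eqref{eq:loewner_reverse_2}, namely $\partial_\vartheta \log|(h^j_{s,\vartheta})'(z^j_s)| = \Re(2/(z^j_\vartheta)^2) = 2((x^j_\vartheta)^2-(y^j_\vartheta)^2)/|z^j_\vartheta|^4$ and $\partial_\vartheta \log y^j_\vartheta = 2/|z^j_\vartheta|^2$. Adding them and dividing by four gives
\[
    \int_s^t \frac{(x^j_\vartheta)^2}{|z^j_\vartheta|^4}\,d\vartheta \;=\; \tfrac{1}{4}\log\!\Bigl(|(h^j_{s,t})'(z^j_s)|\cdot y^j_t/y^j_s\Bigr) ,
\]
so, summing over $j=1,2$ and exponentiating,
\[
    \exp\!\left(\int_s^t \Re\!\tfrac{2}{z^1_\vartheta z^2_\vartheta}\,d\vartheta\right) \;\le\; \frac{(y^1_t y^2_t)^{1/4}}{(y^1_s y^2_s)^{1/4}}\,|(h^1_{s,t})'(z^1_s)(h^2_{s,t})'(z^2_s)|^{1/4} .
\]
Finally, from $\partial_\vartheta (y^j_\vartheta)^2 = 4(y^j_\vartheta)^2/|z^j_\vartheta|^2 \le 4$ we get $y^j_t \le (y^2+4t)^{1/2}$, so $(y^1_t y^2_t)^{1/4} \le (y^2+4t)^{1/4}$; plugging back into the integral representation of $D_t$ yields precisely the stated inequality, with the constant $2$ coming from the factor $-2(V^1-V^2)/(z^1 z^2)$ in the variation-of-constants formula.
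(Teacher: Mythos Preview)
Your proof is correct and follows essentially the same route as the paper: derive the variation-of-constants formula for $D_s$ (which the paper imports from \cite{JVRW14}), then bound the exponent via the identities for $\partial_\vartheta \log|(h^j)'|$ and $\partial_\vartheta \log y^j$. The only cosmetic difference is that you use the pointwise AM--GM inequality $2ab \le a^2+b^2$ on the integrand, whereas the paper first applies Cauchy--Schwarz to the integral $\int x^1 x^2/(|z^1|^2|z^2|^2)$ and then AM--GM on the resulting product of square roots; both routes land on exactly the same bound $\sum_j \tfrac{1}{4}\log\bigl(|(h^j_{s,t})'(z^j_s)|\,y^j_t/y^j_s\bigr)$.
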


\begin{proof}
    The proof of \cite[Lemma 2.3]{JVRW14} shows that
    \begin{multline*}
        |h^1_t(z)-h^2_t(z)| \\
        \le \int_0^t |V^1(s)-V^2(s)| \frac{2}{|z^1_s z^2_s|} \exp\left( 2 \int_s^t \frac{x^1_\vartheta x^2_\vartheta - y^1_\vartheta y^2_\vartheta}{((x^1_\vartheta)^2+(y^1_\vartheta)^2)((x^2_\vartheta)^2+(y^2_\vartheta)^2)} \, d\vartheta \right) \, ds.
    \end{multline*}
    
    The claim follows by estimating
    \begin{align*}
        & 2 \int_s^t \frac{x^1_\vartheta x^2_\vartheta - y^1_\vartheta y^2_\vartheta}{((x^1_\vartheta)^2+(y^1_\vartheta)^2)((x^2_\vartheta)^2+(y^2_\vartheta)^2)} \, d\vartheta\\
        &\quad \le 2 \int_s^t \frac{x^1_\vartheta x^2_\vartheta}{((x^1_\vartheta)^2+(y^1_\vartheta)^2)((x^2_\vartheta)^2+(y^2_\vartheta)^2)} \, d\vartheta\\
        &\quad \le \prod_{j=1,2} \left( 2 \int_s^t \frac{(x^j_\vartheta)^2}{((x^j_\vartheta)^2+(y^j_\vartheta)^2)^2} \, d\vartheta \right)^{1/2}\\
        &\quad = \prod_{j=1,2} \left( \frac{1}{2} \int_s^t \frac{2((x^j_\vartheta)^2-(y^j_\vartheta)^2)}{((x^j_\vartheta)^2+(y^j_\vartheta)^2)^2} \, d\vartheta + \frac{1}{2} \int_s^t \frac{2}{(x^j_\vartheta)^2+(y^j_\vartheta)^2} \, d\vartheta \right)^{1/2}\\
        &\quad = \prod_{j=1,2} \left( \frac{1}{2} \log |(h^j_{s,t})'(z^j_s)| + \frac{1}{2} \log\frac{y^j_t}{y^j_s} \right)^{1/2}\\
        &\quad \le \sum_{j=1,2} \left( \frac{1}{4} \log |(h^j_{s,t})'(z^j_s)| + \frac{1}{4} \log\frac{y^j_t}{y^j_s} \right)
    \end{align*}
    and $y^j_t \le \sqrt{y^2+4t}$. (In the last line we used $\sqrt{ab}\le \frac{a+b}{2}$ for $a,b\ge 0$.)
\end{proof}

\subsection{Taking moments}

Let $\kappa,\tilde\kappa > 0$, and let $V^1 = \sqrt{\kappa}B$, $V^2 = \sqrt{\tilde\kappa}B$, where $B$ is a standard Brownian motion. In the following, $C$ will always denote a finite deterministic constant that might change from line to line.

\cref{thm:h_diff} and the Cauchy-Schwarz inequality imply
\begin{align}
    &\ex |h^1_t(z)-h^2_t(z)|^p \nonumber\\
    &\quad \le C |\Delta \sqrt{\kappa}|^p \, \ex \left| \int_0^t |B_s| \frac{1}{|z^1_s z^2_s|} \frac{1}{(y^1_s y^2_s)^{1/4}} |(h^1_{s,t})'(z^1_s) (h^2_{s,t})'(z^2_s)|^{1/4} \, ds \right|^p \nonumber\\
    &\quad \le C |\Delta \sqrt{\kappa}|^p \, \ex \prod_{j=1,2} \left| \int_0^t |B_s| \frac{1}{|z^j_s|^2} \frac{1}{(y^j_s)^{1/2}} |(h^j_{s,t})'(z^j_s)|^{1/2} \, ds \right|^{p/2} \nonumber\\
    &\quad \le C |\Delta \sqrt{\kappa}|^p \prod_{j=1,2} \left( \ex \left| \int_0^t |B_s| \frac{1}{|z^j_s|^2} \frac{1}{(y^j_s)^{1/2}} |(h^j_{s,t})'(z^j_s)|^{1/2} \, ds \right|^p \right)^{1/2}. \label{eq:h_diffkappa_moment}
\end{align}

Now the flows for $\kappa$ and $\tilde\kappa$ can be studied separately. We see that as long as the above integral is bounded, then $\ex |\Delta_{\sqrt{\kappa}} h^\kappa_t(z)|^p \lesssim |\Delta \sqrt{\kappa}|^p$. Heuristically, the typical growth of $y_s$ is like $\sqrt{s}$, as was shown in \cite{Law09}. Therefore, we expect the integrand to be bounded by $s^{1/2-1-1/4-\beta/4} = s^{-(3+\beta)/4}$ which is integrable since $\beta = \beta(\kappa) < 1$ for $\kappa \neq 8$.

In order to make the idea precise, we will reparametrise the integral in order to match the setting in \cite{Law09} and apply their results.

\subsection{Reparametrisation}

Let $\kappa > 0$. In \cite{Law09}, the flow
\begin{equation}\label{eq:loewner_reverse_a}
    \partial_s \tilde h_s(z) = \frac{-a}{\tilde h_s(z)-\tilde B_s}, \quad \tilde h_0(z) = z,
\end{equation}
with $a = \dfrac{2}{\kappa}$ is considered. To translate our notation, observe that
\begin{equation*}
    \partial_s h_{s/\kappa}(z) = \frac{-2/\kappa}{h_{s/\kappa}(z)-\sqrt{\kappa}B_{s/\kappa}}.
\end{equation*}
If we let $\tilde B_s = \sqrt{\kappa}B_{s/\kappa}$, then 
\begin{equation*}
    h_{s/\kappa}(z) = \tilde h_s(z) \implies h_s(z) = \tilde h_{\kappa s}(z).
\end{equation*}
Moreover, if we let $\tilde z_s = \tilde h_s(z) - \tilde B_s$, then $z_s = h_s(z) - \sqrt{\kappa}B_s = \tilde z_{\kappa s}$.

Therefore,
\begin{align*}
    \int_0^t |B_s| \frac{1}{|z_s|^2} \frac{1}{y_s^{1/2}} |h_{s,t}'(z_s)|^{1/2} \, ds 
    &= \int_0^t \left|\frac{1}{\sqrt{\kappa}}\tilde B_{\kappa s}\right| \frac{1}{|\tilde z_{\kappa s}|^2} \frac{1}{\tilde y_{\kappa s}^{1/2}} |\tilde h_{\kappa s,\kappa t}'(\tilde z_{\kappa s})|^{1/2} \, ds\\
    &= \int_0^{\kappa t} \kappa^{-3/2} |\tilde B_s| \frac{1}{|\tilde z_s|^2} \frac{1}{\tilde y_s^{1/2}} |\tilde h_{s,\kappa t}'(\tilde z_s)|^{1/2} \, ds.
\end{align*}

For notational simplicity, we will write just $t$ instead of $\kappa t$ and $B, h_s, z_s$ instead of $\tilde B, \tilde h_s, \tilde z_s$.

In the next step, we will let the flow start at $z_0 = i$ instead of $i\delta$. Observe that
\begin{equation*}
    \partial_s (\delta^{-1} h_{\delta^2 s}(\delta z)) = \frac{-a}{\delta^{-1} h_{\delta^2 s}(\delta z) - \delta^{-1} B_{\delta^2 s}},
\end{equation*}
so we can write $h_s(\delta z) = \delta \tilde h_{s/\delta^2}(z)$ where $(\tilde h_s)$ is driven by $\delta^{-1} B_{\delta^2 s} =: \tilde B_s$. Note that $\tilde h_{s/\delta^2}'(z) = h_s'(\delta z)$. As before, we denote $z_s = h_s(\delta z)-B_s$ and $\tilde z_s = \tilde h_s(z)-\tilde B_s$, where $z_s = \delta \tilde z_{s/\delta^2}$. Consequently,
\begin{align*}
    & \int_0^t |B_s| \frac{1}{|z_s|^2} \frac{1}{y_s^{1/2}} |h_{s,t}'(z_s)|^{1/2} \, ds \\
    &\quad = \int_0^t |\delta \tilde B_{s/\delta^2}| \frac{1}{\delta^2 |\tilde z_{s/\delta^2}|^2} \frac{1}{\delta^{1/2} \tilde y_{s/\delta^2}^{1/2}} |\tilde h_{s/\delta^2,t/\delta^2}'(\tilde z_{s/\delta^2})|^{1/2} \, ds\\
    &\quad = \delta^{-3/2} \int_0^t |\tilde B_{s/\delta^2}| \frac{1}{|\tilde z_{s/\delta^2}|^2} \frac{1}{\tilde y_{s/\delta^2}^{1/2}} |\tilde h_{s/\delta^2,t/\delta^2}'(\tilde z_{s/\delta^2})|^{1/2} \, ds\\
    &\quad = \delta^{1/2} \int_0^{t/\delta^2} |\tilde B_s| \frac{1}{|\tilde z_s|^2} \frac{1}{\tilde y_s^{1/2}} |\tilde h_{s,t/\delta^2}'(\tilde z_s)|^{1/2} \, ds.
\end{align*}

\noindent Again, for notational simplicity we will stop writing the $\tilde{\ }$ from now on.

\noindent
Now, let $z_0 = i$, and (cf. \cite{Law09})
\begin{equation*}
    \sigma(s) = \inf\{ r \mid y_r = e^{ar}\} = \int_0^s |z_{\sigma(r)}|^2 \, dr
\end{equation*}
which is random and strictly increasing in $s$.

Then
\begin{multline*}
    \delta^{1/2} \int_0^{t/\delta^2} |B_s| \frac{1}{|z_s|^2} \frac{1}{y_s^{1/2}} |h_{s,t/\delta^2}'(z_s)|^{1/2} \, ds \\
    = \delta^{1/2} \int_0^{\sigma^{-1}(t/\delta^2)} |B_{\sigma(s)}| \frac{1}{y_{\sigma(s)}^{1/2}} |h_{{\sigma(s)},t/\delta^2}'(z_{\sigma(s)})|^{1/2} \, ds.
\end{multline*}
This is the integral we will work with.

To sum it up, we have the following.
\begin{proposition}\label{thm:h_integral_reparametrisation}
    Let $z \in \HH$, and $(h_s(\delta z))_{s \ge 0}$ satisfy \eqref{eq:loewner_reverse_2} with $V(s) = \sqrt{\kappa} B_s$ and a standard Brownian motion $B$, and $(\tilde h_s(z))_{s \ge 0}$ satisfy \eqref{eq:loewner_reverse_a} with a standard Brownian motion $\tilde B$. Let $x_s + iy_s = z_s = h_s(\delta z)-V(s)$, and $\tilde x_s + i\tilde y_s = \tilde z_s = \tilde h_s(z) - \tilde B_s$. Then, with the notations above,
    \begin{equation*}
        \int_0^t |B_s| \frac{1}{|z_s|^2} \frac{1}{y_s^{1/2}} |h_{s,t}'(z_s)|^{1/2} \, ds
    \end{equation*}
    has the same law as
    \begin{equation*}
        \kappa^{-3/2} \delta^{1/2} \int_0^{\sigma^{-1}(\kappa t/\delta^2)} |\tilde B_{\sigma(s)}| \frac{1}{\tilde y_{\sigma(s)}^{1/2}} |\tilde h_{{\sigma(s)},\kappa t/\delta^2}'(\tilde z_{\sigma(s)})|^{1/2} \, ds.
    \end{equation*}
    (Recall that $\tilde y_{\sigma(s)}= e^{as}$.)
\end{proposition}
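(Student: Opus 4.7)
The plan is to combine three successive changes of variables, each already sketched in the text preceding the statement, and verify that the Jacobian/scaling factors accumulate into $\kappa^{-3/2}\delta^{1/2}$ together with the stated change of the upper integration limit.

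First, I would perform the $\kappa$-time-change. Setting $\tilde B_s := \sqrt{\kappa} B_{s/\kappa}$ (a standard Brownian motion by Brownian scaling) and $\tilde h_s := h_{s/\kappa}$, one checks from \eqref{eq:loewner_reverse_2} that $\tilde h$ satisfies \eqref{eq:loewner_reverse_a} with driver $\tilde B$. Substituting $s \mapsto s/\kappa$ in the integral on the left-hand side produces $ds \mapsto \kappa^{-1} ds$ and $|B_s| = \kappa^{-1/2}|\tilde B_{\kappa s}|$, giving an overall factor of $\kappa^{-3/2}$; the remaining factors $|z_s|^{-2}$, $y_s^{-1/2}$, $|h_{s,t}'(z_s)|^{1/2}$ are left invariant because these are pathwise identities in the new time parameter, and the new upper limit becomes $\kappa t$.

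Secondly, I would apply the spatial rescaling by $\delta$ described in the text. Starting from the $a$-flow with initial point $\delta i$, defining $\tilde B_s := \delta^{-1} B_{\delta^2 s}$ (again standard Brownian) and $\tilde h_s(z) := \delta^{-1} h_{\delta^2 s}(\delta z)$, a direct verification shows $\tilde h$ satisfies \eqref{eq:loewner_reverse_a} with driver $\tilde B$. Under this rescaling $z_s = \delta \tilde z_{s/\delta^2}$, $y_s = \delta \tilde y_{s/\delta^2}$, and crucially $h_{s,t}'(z_s) = \tilde h_{s/\delta^2,t/\delta^2}'(\tilde z_{s/\delta^2})$, since the flow $h_{s,t}$ between two rescaled domains has unchanged derivative. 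Substituting $s \mapsto \delta^2 s$ in the integral, the factors $|z_s|^{-2}$, $y_s^{-1/2}$, $|B_s|$, and $ds$ contribute $\delta^{-2}\cdot\delta^{-1/2}\cdot\delta\cdot\delta^2 = \delta^{1/2}$, and the upper limit becomes $\kappa t/\delta^2$.

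Finally, I would perform the Lawler-type time change $s = \sigma(r)$, with $\sigma$ defined so that $\tilde y_{\sigma(r)} = e^{ar}$, equivalently $\sigma(r) = \int_0^r |\tilde z_{\sigma(u)}|^2 \, du$. Then $\sigma'(r) = |\tilde z_{\sigma(r)}|^2$, so $ds = |\tilde z_{\sigma(r)}|^2 dr$ exactly cancels the $|\tilde z_s|^{-2}$ in the integrand, producing the right-hand side of the proposition with upper limit $\sigma^{-1}(\kappa t/\delta^2)$.

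The main obstacle is bookkeeping, in particular verifying that $h_{s,t}'$ transforms correctly under both time and space rescaling — this is the only non-formal step. Everything else is substitution and Brownian scaling, and since $B$ and $\tilde B$ in the final statement are both standard Brownian motions, the identity is one in distribution (not pathwise), exactly as claimed.
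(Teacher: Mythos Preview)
Your proposal is correct and follows exactly the approach of the paper: the proposition is simply a summary of the three reparametrisations (the $\kappa$-time-change, the $\delta$-space-rescaling, and the $\sigma$-time-change) that are carried out explicitly in the text immediately preceding the statement, and your bookkeeping of the factors $\kappa^{-3/2}$, $\delta^{1/2}$, and the cancellation $\sigma'(r)=|\tilde z_{\sigma(r)}|^2$ matches the paper's computations. The only minor remark is that your Step~2 should start from $\delta z$ rather than $\delta i$, as the proposition is stated for general $z\in\HH$; the argument is unchanged.
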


\subsection{Main proof}

In the following, we fix $\kappa \in [\kappa_-,\kappa_+]$, $a = \dfrac{2}{\kappa}$, and let $(h_s(x+i))_{s \ge 0}$ satisfy \eqref{eq:loewner_reverse_a} with initial point $z_0 = x+i$, $|x| \le 1$.

Our goal is to estimate
\begin{multline*}
    \ex \left| \delta^{1/2} \int_0^{\sigma^{-1}(t/\delta^2)} |B_{\sigma(s)}| \frac{1}{y_{\sigma(s)}^{1/2}} |h_{{\sigma(s)},t/\delta^2}'(z_{\sigma(s)})|^{1/2} \, ds \right|^p\\
    = \ex \left| \delta^{1/2} \int_0^\infty 1_{\sigma(s) \le t/\delta^2} |B_{\sigma(s)}| \frac{1}{y_{\sigma(s)}^{1/2}} |h_{{\sigma(s)},t/\delta^2}'(z_{\sigma(s)})|^{1/2} \, ds \right|^p.
\end{multline*}
With \eqref{eq:h_diffkappa_moment} and \cref{thm:h_integral_reparametrisation} this will complete the proof of \cref{thm:f_diffkappa_moment}.

From the definition of $\sigma$ it follows that $\sigma(s) \ge \int_0^s e^{2ar} \, dr = \frac{1}{2a}(e^{2as}-1)$, or equivalently, $\sigma^{-1}(t) \le \frac{1}{2a}\log(1+2at)$. Therefore, $\sigma^{-1}(t/\delta^2) \le \frac{1}{a}\log\frac{C}{\delta}$ and
\begin{multline}\label{eq:integral_moment}
    \ex \left| \delta^{1/2} \int_0^{\sigma^{-1}(t/\delta^2)} |B_{\sigma(s)}| \frac{1}{y_{\sigma(s)}^{1/2}} |h_{{\sigma(s)},t/\delta^2}'(z_{\sigma(s)})|^{1/2} \, ds \right|^p\\
    \le \delta^{p/2} \left( \int_0^{\frac{1}{a}\log\frac{C}{\delta}} \left( \ex \left[ 1_{\sigma(s) \le t/\delta^2} |B_{\sigma(s)}|^p \frac{1}{y_{\sigma(s)}^{p/2}} |h_{{\sigma(s)},t/\delta^2}'(z_{\sigma(s)})|^{p/2} \right] \right)^{1/p} \, ds \right)^p
\end{multline}
where we have applied Minkowski's inequality to pull the moment inside the integral.

To proceed, we need to know more about the behaviour of the reverse SLE flow, which also incorporates the behaviour of $\sigma$. This has been studied in \cite{Law09}. Their tool was to study the process $J_s$ defined by $\sinh J_s = \frac{x_{\sigma(s)}}{y_{\sigma(s)}} = e^{-as}x_{\sigma(s)}$. By \cite[Lemma 6.1]{Law09}, this process satisfies
\begin{equation*}
    dJ_s = -r_c \tanh J_s \, ds + dW_s
\end{equation*}
where $W_s = \int_0^{\sigma(s)} \frac{1}{|z_r|} \, dB_r$ is a standard Brownian motion and $r_c$ is defined in \eqref{eq:moment_estimate_parameters}.

The following results have been originally stated for an equivalent probability measure $\pr_*$, depending on a parameter $r$, such that
\begin{equation*}
    dJ_s = -q \tanh J_s \, ds + dW^*_s
\end{equation*}
with $q>0$ and a process $W^*$ that is a Brownian motion under $\pr_*$. But setting the parameter $r=0$, we have $\pr_* = \pr$, $q=r_c$, and $W^* = W$. Therefore, under the measure $\pr$, the results apply with $q=r_c$.

Note also that although the results were originally stated for a reverse SLE flow starting at $z_0 = i$, they can be written for flows starting at $z_0 = x+i$ without change of the proof. One just uses \cite[Lemma 7.1 (28)]{Law09} with $\cosh J_0 = \sqrt{1+x^2}$.

Recall that \cite{Law09,JVL11} use the notation $\sinh J_s = \frac{x_{\sigma(s)}}{y_{\sigma(s)}}$ and hence $\cosh^2 J_s = 1+\frac{x_{\sigma(s)}^2}{y_{\sigma(s)}^2}$.

\begin{lemma}[{\cite[Lemma 5.6]{JVL11}}]
    Suppose $z_0 = x+i$.
    There exists a constant $C < \infty$, depending on $\kappa_-$, $\kappa_+$, such that for each $s \ge 0$, $u > 0$ there exists an event $E_{u,s}$ with
    \begin{equation*}
        \pr(E_{s,u}^c) \le C (1+x^2)^{r_c} u^{-2r_c}
    \end{equation*}
    on which
    \begin{equation*}
        \sigma(s) \le u^2 e^{2as} \quad \text{and} \quad 1+\frac{x_{\sigma(s)}^2}{y_{\sigma(s)}^2} \le u^2/4.
    \end{equation*}
\end{lemma}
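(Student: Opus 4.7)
The plan is to define
\[ E_{s,u} \defeq \Bigl\{ \sup_{r \le s} \cosh^2 J_r \le c u^2 \Bigr\} \]
for a small constant $c = c(\kappa_-, \kappa_+) > 0$ and to bound $\pr(E_{s,u}^c)$ via a Girsanov-type change of measure. Since $\sinh J_s = x_{\sigma(s)}/y_{\sigma(s)}$ and $y_{\sigma(s)} = e^{as}$, we have $|z_{\sigma(s)}|^2 = e^{2as}\cosh^2 J_s$, which together with $\sigma(s) = \int_0^s |z_{\sigma(r)}|^2 \, dr$ gives the representation $\sigma(s) = \int_0^s e^{2ar} \cosh^2 J_r \, dr$. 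On $E_{s,u}$ we therefore have $\cosh^2 J_s \le cu^2 \le u^2/4$ (for $c$ small enough) and $\sigma(s) \le cu^2 \cdot (e^{2as}-1)/(2a) \le u^2 e^{2as}$, so both claimed bounds follow. It thus suffices to bound $\pr(\sup_{r \le s}\cosh J_r > u\sqrt{c})$ by $C(1+x^2)^{r_c} u^{-2r_c}$, uniformly in $s$.

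The main tool will be the exponential martingale
\[ N_s \defeq \left(\frac{\cosh J_s}{\cosh J_0}\right)^{2r_c} \exp\!\left(-r_c s + r_c \int_0^s \tanh^2 J_r \, dr\right), \]
which one obtains by applying Itô to $\log \cosh J_s$ (using $dJ = -r_c \tanh J \, ds + dW$), multiplying by $2r_c$, and exponentiating. Direct computation shows $N_s$ is a nonnegative martingale with $N_0 = 1$. Under the tilted measure $\pr^*$ defined by $d\pr^*/d\pr \big|_{\mathcal F_s} = N_s$, Girsanov's theorem gives that $J$ satisfies $dJ = r_c \tanh J \, ds + dW^*$ for a $\pr^*$-Brownian motion $W^*$. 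That is, under $\pr^*$ the drift of $J$ reverses sign, so $|J_s| \to \infty$ ballistically and $\cosh^{-2} J_r$ decays exponentially.

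Using $1/N_s = (\cosh J_0/\cosh J_s)^{2r_c} \exp(r_c \int_0^s \cosh^{-2} J_r \, dr)$ (observing that $r_c s - r_c \int\tanh^2 = r_c \int \cosh^{-2}$) together with the change-of-measure identity, the pointwise bound reads
\[ \pr(\cosh J_s > u) = (\cosh J_0)^{2r_c}\, \ex^*\!\left[\frac{1_{\cosh J_s > u}}{(\cosh J_s)^{2r_c}}\exp\!\Bigl(r_c \!\int_0^s\! \cosh^{-2} J_r \, dr\Bigr)\right] \le C (\cosh J_0)^{2r_c} u^{-2r_c}, \]
where we use $\cosh J_0 = \sqrt{1+x^2}$ and the (uniform in $s$ and $J_0$ for $|x|\le 1$) bound on the exponential integral. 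To upgrade the pointwise bound to a bound on the supremum, apply the same identity at the stopping time $\tau = \inf\{r : \cosh J_r > u\sqrt c\}$ (truncated at $s$) using the strong Markov property, or invoke Doob's maximal inequality for the nonnegative supermartingale obtained from $N$. This yields $\pr(E_{s,u}^c) \le C (1+x^2)^{r_c} u^{-2r_c}$ as required.

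The main obstacle is the $s$-independence of the bound: the naive moment $\ex \cosh^{2r_c} J_s$ grows exponentially, because $2r_c$ is critical in the sense that $\cosh^{-2r_c}$ is (up to normalization) the invariant density of the diffusion $J$, and hence $\ex^\pi \cosh^{2r_c} J = \infty$. The Girsanov trick circumvents exactly this difficulty: under $\pr^*$ the diffusion $J$ is ballistic, so $\int_0^\infty \cosh^{-2}J_r \, dr$ has bounded exponential moment. Controlling $\ex^*\!\bigl[\exp\bigl(r_c\int_0^s \cosh^{-2} J_r \, dr\bigr)\bigr]$ uniformly in $s$ (with constants depending only on $|x|\le 1$, $\kappa_-$, $\kappa_+$) is the technical heart of the proof, and is the computation carried out in \cite{Law09,JVL11}.
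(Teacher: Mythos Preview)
The paper does not prove this lemma; it is quoted verbatim from \cite[Lemma 5.6]{JVL11}. Your sketch is correct and is essentially the argument of \cite{Law09,JVL11}: define the event through $\sup_{r\le s}\cosh J_r$, verify the two claimed inequalities via the identity $\sigma(s)=\int_0^s e^{2ar}\cosh^2 J_r\,dr$, and control $\pr(E_{s,u}^c)$ by the Girsanov change of measure associated to the exponential martingale $N_s$, with the uniform-in-$s$ bound on $\ex^*\bigl[\exp\bigl(r_c\int_0^s\cosh^{-2}J_r\,dr\bigr)\bigr]$ (coming from the ballistic behaviour of $J$ under $\pr^*$) as the technical core.
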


Fix $s \in [0,t]$. Let
\begin{equation*}
    E_u = \left\{ \sigma(s) \le u^2 e^{2as} \text{ and } 1+\frac{x_{\sigma(s)}^2}{y_{\sigma(s)}^2} \le u^2 \right\}
\end{equation*}
and $A_n$ = $E_{\exp(n)} \setminus E_{\exp(n-1)}$ for $n \ge 1$, and $A_0 = E_1$. Then 
\begin{equation}\label{eq:prob_A_n}
\pr(A_n) \le \pr(E_{\exp(n-1)}^c) \le C (1+x^2)^{r_c} e^{-2r_c n} .
\end{equation}
(The constant $C$ may change from line to line.)

\begin{lemma}[see proof of {\cite[Lemma 5.7]{JVL11}}]\label{thm:coshJ_large}
    Suppose $z_0 = x+i$.
    There exists $C < \infty$, depending on $\kappa_-$, and a global constant $\alpha > 0$, such that for all $s \ge 0$, $u > \sqrt{1+x^2}$, and $k > 2a$ we have
    \begin{equation*}
        \pr \left( \sigma(s) \le u^2 e^{2as} \text{ and } 1+\frac{x_{\sigma(s)}^2}{y_{\sigma(s)}^2} \ge u^2 e^k \right) \le C (1+x^2)^{r_c} u^{-2r_c} e^{-\alpha(k-2a)^2}.
    \end{equation*}
\end{lemma}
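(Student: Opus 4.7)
The plan is to follow \cite[Lemma 5.7]{JVL11}, decomposing the bound as a product of a polynomial-in-$u$ factor and a Gaussian-in-$k$ factor, each arising from a different aspect of the SDE $dJ_r = -r_c \tanh J_r \, dr + dW_r$ governing the angular process. Since $\cosh^2 J_s = 1 + x_{\sigma(s)}^2/y_{\sigma(s)}^2$, the event rewrites as $\{\sigma(s) \le u^2 e^{2as},\ \cosh J_s \ge u e^{k/2}\}$; since $u > \sqrt{1+x^2} = \cosh J_0$, the stopping time $\tau \defeq \inf\{r : \cosh J_r \ge u\}$ is at most $s$ on this event, with $\cosh J_\tau = u$ by continuity.

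For the polynomial factor $(1+x^2)^{r_c} u^{-2r_c}$, I would argue exactly as in \cref{thm:coshJ_large}: the process $\cosh^{2r_c} J_r$ is a submartingale whose Doob--Meyer compensator is of the lower order $\cosh^{2r_c-2} J_r$, and Doob's maximal inequality then yields $\pr(\tau \le s) \lesssim (1+x^2)^{r_c} u^{-2r_c}$. This also explains the hypothesis $u > \sqrt{1+x^2}$.

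For the Gaussian factor, I would condition on $\tau$ via the strong Markov property. The $\sigma$-constraint gives
\[
u^2 \int_\tau^s e^{2ar}\,dr \le \int_\tau^s e^{2ar}\cosh^2 J_r\,dr \le u^2 e^{2as},
\]
i.e.\ $1 - e^{-2a(s-\tau)} \le 2a$, which bounds the effective time window $s-\tau$ by an $a$-dependent constant (in the regime $2a \ge 1$ one raises the threshold to $ue^c$ with $c > 0$ and optimizes, paying an extra $(ue^c)^{-2r_c}$ factor). Integrating the SDE between $\tau$ and $s$ and using $|\tanh J_r| \le 1$ yields $|J_s - J_\tau| \le r_c(s-\tau) + |W_s - W_\tau|$; on the other hand, the event forces $|J_s - J_\tau| \ge \cosh^{-1}(ue^{k/2}) - \cosh^{-1}(u) \approx k/2$. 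Hence $|W_s - W_\tau| \gtrsim k/2 - r_c(s-\tau)$, and the Brownian Gaussian tail delivers a probability at most $\exp(-\alpha(k-2a)^2)$, where the shift $2a$ inside the square absorbs the maximal drift contribution $r_c(s-\tau)$ via $r_c = \tfrac12 + 2a$.

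The main obstacle is precisely calibrating the constants so that the exponent reads $\alpha(k-2a)^2$: this requires careful bookkeeping of the Lipschitz constant $r_c$ of the drift against the actual length of the time window $s-\tau$, and in the low-$\kappa$ regime (where the elementary window bound degenerates) a further optimization over the intermediate threshold $c$. Combining the polynomial factor for the macroscopic event $\tau \le s$ with the conditional Gaussian estimate via the strong Markov decomposition yields the announced bound, with $C$ depending only on $\kappa_-$.
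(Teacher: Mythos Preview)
Your overall strategy---splitting into a polynomial factor from hitting level $u$ and a Gaussian factor from a short-time Brownian increment---is the right one and is indeed what \cite{JVL11} does (the present paper gives no separate proof, only the citation). There is, however, a genuine gap in your Gaussian step.

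You define $\tau$ as the \emph{first} hitting time of level $u$ and then write
\[
u^2\int_\tau^s e^{2ar}\,dr \le \int_\tau^s e^{2ar}\cosh^2 J_r\,dr .
\]
This inequality requires $\cosh J_r \ge u$ for all $r\in[\tau,s]$, which is not guaranteed: after $\tau$ the process may well drop below $u$ before climbing to $ue^{k/2}$ at time $s$. If you instead use the \emph{last} time $\rho\le s$ with $\cosh J_\rho = u$, the lower bound does hold on $[\rho,s]$, but $\rho$ is not a stopping time and you lose the strong Markov property you invoke in the next line. As written, the argument is circular: you need the Markov property to restart, and you need the pathwise lower bound to get the time-window estimate, but no single time gives both.

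The fix is to use the two times in sequence. First apply the strong Markov property at the first hitting time $\tau$ to extract the factor $(1+x^2)^{r_c}u^{-2r_c}$ (this is really the content of the preceding lemma, \cite[Lemma 5.6]{JVL11}; your ``exactly as in \cref{thm:coshJ_large}'' is a self-reference). In the restarted process, which now starts from $\cosh\tilde J_0 = u$, take $\rho$ to be the last time in $[0,s-\tau]$ with $\cosh\tilde J_\rho = u$; the $\sigma$-constraint then legitimately bounds the window length $(s-\tau)-\rho$ by a deterministic constant $T_0$ (after the threshold-raising trick when $2a\ge 1$, which you correctly note). The event forces $\sup_{r\in[(s-\tau)-T_0,\,s-\tau]}|\tilde W_{s-\tau}-\tilde W_r|\gtrsim k/2 - C$, and since $\tilde W$ is a fresh Brownian motion independent of $\mathcal F_\tau$, the reflection principle gives the Gaussian tail uniformly in the (now deterministic, given $\mathcal F_\tau$) endpoint $s-\tau$.
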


We proceed to estimating
\begin{multline}\label{eq:moment_on_An}
    \ex \left[ 1_{A_n} 1_{\sigma(s) \le t/\delta^2} |B_{\sigma(s)}|^p \frac{1}{y_{\sigma(s)}^{p/2}} |h_{{\sigma(s)},t/\delta^2}'(z_{\sigma(s)})|^{p/2} \right]\\
    = \ex \left[ 1_{A_n} 1_{\sigma(s) \le t/\delta^2} |B_{\sigma(s)}|^p \frac{1}{y_{\sigma(s)}^{p/2}} \ex \left[ |h_{{\sigma(s)},t/\delta^2}'(z_{\sigma(s)})|^{p/2} \mid \mathcal F_{\sigma(s)} \right] \right]
\end{multline}
where $\mathcal F$ is the filtration generated by $B$.

Note that $y_{\sigma(s)} = e^{as}$ by the definition of $\sigma$. Moreover, on the set $A_n$, the Brownian motion is easy to handle since by Hölder's inequality
\begin{align}
    \ex [ 1_{A_n} 1_{\sigma(s) \le t/\delta^2} |B_{\sigma(s)}|^p ] &\le \ex \left[ 1_{A_n} 1_{\sigma(s) \le t/\delta^2} \sup_{r \in [0, e^{2n} e^{2as}]} |B_r|^p \right] \nonumber\\
    &\le \pr(A_n \cap \{\sigma(s) \le t/\delta^2\})^{1-\varepsilon}\, \ex \left[ \sup_{r \in [0, e^{2n} e^{2as}]} |B_r|^{p/\varepsilon} \right]^{\varepsilon} \nonumber\\
    &\le C\, \pr(A_n \cap \{\sigma(s) \le t/\delta^2\})^{1-\varepsilon}\, e^{np} e^{pas} \label{eq:bm_moment}
\end{align}
for any $\varepsilon > 0$.

It remains to handle $\ex \left[ |h_{{\sigma(s)},t/\delta^2}'(z_{\sigma(s)})|^{p/2} \mid \mathcal F_{\sigma(s)} \right]$.

The following result is well-known and follows from the Schwarz lemma and mapping the unit disc to the half-plane.
\begin{lemma}
    Let $f: \HH \to \HH$ be a holomorphic function. Then $|f'(z)| \le \frac{\Im(f(z))}{\Im(z)}$ for all $z \in \HH$.
\end{lemma}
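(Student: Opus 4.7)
The plan is to deduce this from the classical Schwarz lemma on the unit disc by Möbius conjugation to the half-plane. This is the standard derivation of the half-plane form of the Schwarz--Pick inequality, and the authors already flag exactly this route in the sentence preceding the lemma.

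Fix $z_0 \in \HH$ and set $w_0 = f(z_0) \in \HH$. The idea is to sandwich $f$ between a Möbius map $A\colon \DD \to \HH$ with $A(0) = z_0$ and a Möbius map $B\colon \HH \to \DD$ with $B(w_0) = 0$, so that the composition $g \defeq B \circ f \circ A$ is a holomorphic self-map of $\DD$ fixing the origin, to which the classical Schwarz lemma applies. Concretely I would take
\[ A(\zeta) = z_0 + 2i\,\Im(z_0)\,\frac{\zeta}{1-\zeta},\qquad B(w) = \frac{w-w_0}{w-\overline{w_0}}, \]
and verify by a short direct computation that $A(\DD) \subseteq \HH$, $B(\HH) \subseteq \DD$, and
\[ |A'(0)| = 2\,\Im(z_0),\qquad |B'(w_0)| = \frac{1}{2\,\Im(w_0)}. \]
The only subtlety here is checking that these maps really have the claimed image sets; for $B$ this is immediate from the observation that the reflection $\overline{w_0}$ lies farther from any $w \in \HH$ than $w_0$ does, and for $A$ one checks that the boundary circle $\partial \DD$ maps into $\RR$.

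With $g = B \circ f \circ A$ a holomorphic self-map of $\DD$ with $g(0)=0$, the classical Schwarz lemma gives $|g'(0)| \le 1$. Expanding via the chain rule this reads $|B'(w_0)|\,|f'(z_0)|\,|A'(0)| \le 1$, and substituting the two derivative values computed above yields
\[ |f'(z_0)| \le \frac{1}{|A'(0)|\,|B'(w_0)|} = \frac{\Im(f(z_0))}{\Im(z_0)}, \]
as desired. There is no genuine obstacle: the whole argument is a one-line application of Schwarz once the Möbius bookkeeping is in place.
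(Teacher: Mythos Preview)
Your proof is correct and follows exactly the route the paper indicates: the paper's ``proof'' is simply the remark that the result is well known and follows from the Schwarz lemma together with mapping the unit disc to the half-plane, which is precisely the Möbius conjugation you carry out.
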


Recall that the Loewner equation implies
\[ \Im(h_{{\sigma(s)},t/\delta^2}(z_{\sigma(s)})) = y_{t/\delta^2} \le \sqrt{1+2at/\delta^2} \le C\delta^{-1} .\]
Let $\varepsilon > 0$. By the lemma above, we can estimate
\begin{multline}\label{eq:hprime1}
    \ex \left[ |h_{{\sigma(s)},t/\delta^2}'(z_{\sigma(s)})|^{p/2} \mid \mathcal F_{\sigma(s)} \right] \\
    \le (\delta y_{\sigma(s)})^{-(1-\varepsilon)p/2} \ex \left[ |h_{{\sigma(s)},t/\delta^2}'(z_{\sigma(s)})|^{\varepsilon p/2} \mid \mathcal F_{\sigma(s)} \right].
\end{multline}

From \cite[Lemma 3.2]{JVL11} it follows that there exists some $l>0$ such that
\begin{equation}\label{eq:hprime2}
    |h_{{\sigma(s)},t/\delta^2}'(z_{\sigma(s)})| \le C\left(1+\frac{x_{\sigma(s)}^2}{y_{\sigma(s)}^2}\right)^l |h_{{\sigma(s)},t/\delta^2}'(iy_{\sigma(s)})|.
\end{equation}

We claim that
\begin{equation}\label{eq:hprime3}
    \ex \left[ |h_{{\sigma(s)},t/\delta^2}'(iy_{\sigma(s)})|^{\varepsilon p/2} \mid \mathcal F_{\sigma(s)} \right] \le C
\end{equation}
if $\varepsilon>0$ is sufficiently small.

To see this, first recall that for small $\varepsilon > 0$ we have
\begin{equation}\label{eq:hprime_small_moment}
\ex\left[\abs{h_t'(i)}^\varepsilon\right] \le C
\end{equation}
uniformly in $t \ge 1$. This follows from \cite[Theorem 5.4]{JVL11} or, even more elementary, from the proof of \cite[Theorem 3.2]{RS05}.

Now approximate $\sigma(s)$ by simple stopping times $\tilde \sigma \ge \sigma(s)$. A possible choice is $\tilde \sigma = \lceil \sigma(s) 2^n \rceil 2^{-n} \wedge t/\delta^2$. It suffices to show
\[ \ex \left[ |h_{{\tilde\sigma},t/\delta^2}'(iy_{\sigma(s)})|^{\varepsilon p/2} \mid \mathcal F_{\sigma(s)} \right] \le C \]
and then apply Fatou's lemma to pass to the limit.

Now that $\tilde\sigma$ is simple, we can apply \eqref{eq:hprime_small_moment} on each set $F_r = \{ \tilde \sigma = r \}$. Using the strong Markov property of Brownian motion and the scaling invariance of SLE, we get
\begin{align*}
    \ex \left[ 1_{F_r} |h_{{\tilde \sigma},t/\delta^2}'(ie^{as})|^{\varepsilon p/2} \mid \mathcal F_{\sigma(s)} \right] &= 1_{F_r} \ex \left[ |h_{r,t/\delta^2}'(ie^{as})|^{\varepsilon p/2} \right] \\
    &= 1_{F_r} \ex \left[ |h_{e^{-2as}(t/\delta^2-r)}'(i)|^{\varepsilon p/2} \right]\\
    &\le 1_{F_r} C
\end{align*}
and the claim follows.

Combining \eqref{eq:hprime1}, \eqref{eq:hprime2}, and \eqref{eq:hprime3}, we have
\begin{align}
    \ex \left[ |h_{{\sigma(s)},t/\delta^2}'(z_{\sigma(s)})|^{p/2} \mid \mathcal F_{\sigma(s)} \right]
    &\le C\, \delta^{-(1-\varepsilon)p/2}\, y_{\sigma(s)}^{-(1-\varepsilon)p/2} \left(1+\frac{x_{\sigma(s)}^2}{y_{\sigma(s)}^2}\right)^{l\varepsilon p/2} \nonumber\\
    &\le C\, \delta^{-(1-\varepsilon)p/2}\, e^{-(1-\varepsilon)pas/2} \left(1+\frac{x_{\sigma(s)}^2}{y_{\sigma(s)}^2}\right)^{l\varepsilon p/2} \label{eq:hprime_final}
\end{align}
where on the set $A_n$ we have
\begin{equation*}
    1+\frac{x_{\sigma(s)}^2}{y_{\sigma(s)}^2} \le e^{2n}.
\end{equation*}

Proceeding from \eqref{eq:moment_on_An}, we get from \eqref{eq:hprime_final} and \eqref{eq:bm_moment}
\begin{align}
    &\ex \left[ 1_{A_n} 1_{\sigma(s) \le t/\delta^2} |B_{\sigma(s)}|^p \frac{1}{y_{\sigma(s)}^{p/2}} \ex \left[ |h_{{\sigma(s)},t/\delta^2}'(z_{\sigma(s)})|^{p/2} \mid \mathcal F_{\sigma(s)} \right] \right] \nonumber\\
    &\quad \le C\, \ex \left[ 1_{A_n} 1_{\sigma(s) \le t/\delta^2}\, |B_{\sigma(s)}|^p\, e^{-pas/2}\, \delta^{-(1-\varepsilon)p/2}\, e^{-(1-\varepsilon)pas/2} e^{nl\varepsilon p} \right] \nonumber\\
    &\quad \le C\, \delta^{-(1-\varepsilon)p/2}\, e^{nl\varepsilon p}\, e^{-pas+\varepsilon pas/2}\, \pr(A_n \cap \{\sigma(s) \le t/\delta^2\})^{1-\varepsilon}\, e^{np} e^{pas} \nonumber\\
    &\quad = C\, \delta^{-(1-\varepsilon)p/2}\, e^{np+nl\varepsilon p}\, e^{\varepsilon pas/2}\, \pr(A_n \cap \{\sigma(s) \le t/\delta^2\})^{1-\varepsilon}. \label{eq:moment_on_An_result}
\end{align}

We would like to sum this expression in $n$.
\begin{proposition}\label{thm:sum_An}
Let $\sigma(s)$ and $A_n$ be defined as above. Then
\begin{multline*}
    \sum_{n \in \NN} e^{np+nl\varepsilon p}\, \pr(A_n \cap \{\sigma(s) \le t/\delta^2\})^{1-\varepsilon} \\
    \le \begin{cases}
    C & \text{if } p+l\varepsilon p-2r_c(1-\varepsilon) < 0\\
    C(e^{-as}\sqrt{t}/\delta)^{p+l\varepsilon p-2r_c(1-\varepsilon)} & \text{if } p+l\varepsilon p-2r_c(1-\varepsilon) > 0
    \end{cases}
\end{multline*}
where $C < \infty$ depends on $\kappa_-$, $\kappa_+$, $p$, and $\varepsilon$.
\end{proposition}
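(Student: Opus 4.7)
The plan is to split the sum at the natural threshold $n_0 \defeq \lfloor \log u_0 \rfloor + 1$, where $u_0 \defeq e^{-as}\sqrt{t}/\delta$ is chosen so that $u_0^2 e^{2as} = t/\delta^2$. In the regime $n \le n_0$ the constraint $\{\sigma(s) \le t/\delta^2\}$ is (geometrically) vacuous relative to $A_n \subseteq E_{e^n}$, while in $n > n_0$ it becomes binding and forces the angular coordinate to blow up, which is exactly where the sharper estimate of \cref{thm:coshJ_large} enters.

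For the small-$n$ regime, I would simply drop the constraint and use the crude bound $\pr(A_n) \le C e^{-2r_c n}$ from \eqref{eq:prob_A_n} (absorbing $(1+x^2)^{r_c}$ into $C$ since $|x|\le 1$). The resulting geometric sum
\[
\sum_{n \le n_0} e^{n[p(1+l\varepsilon) - 2r_c(1-\varepsilon)]}
\]
is uniformly bounded in Case 1 (negative exponent), and in Case 2 (positive exponent) is dominated by its last term, giving a contribution of order $C u_0^{p(1+l\varepsilon) - 2r_c(1-\varepsilon)}$ as required.

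For the large-$n$ regime, the crucial observation is that on $A_n \cap \{\sigma(s) \le t/\delta^2\}$ with $n > n_0$, one has $\sigma(s) \le u_0^2 e^{2as} < e^{2(n-1)} e^{2as}$, so failure of $E_{e^{n-1}}$ must come from the angular inequality $1+x_{\sigma(s)}^2/y_{\sigma(s)}^2 > e^{2(n-1)}$. Applying \cref{thm:coshJ_large} with $u = u_0$ and $k = 2(n-1-\log u_0)$ then yields
\[
\pr(A_n \cap \{\sigma(s) \le t/\delta^2\}) \le C u_0^{-2r_c} e^{-\alpha(k-2a)^2},
\]
(valid once $n$ is large enough to ensure $k>2a$ and $u_0 > \sqrt{1+x^2}$; the few remaining small-$u_0$ or boundary terms are absorbed into $C$). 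Summing, the factor $e^{-\alpha(1-\varepsilon)(k-2a)^2}$ with $k$ linear in $n$ dominates the exponential growth $e^{np(1+l\varepsilon)}$, so setting $m = n - n_0$ one obtains
\[
u_0^{-2r_c(1-\varepsilon)} e^{n_0 p(1+l\varepsilon)} \sum_{m \ge 0} e^{mp(1+l\varepsilon) - \alpha'(1-\varepsilon) m^2} \le C u_0^{p(1+l\varepsilon) - 2r_c(1-\varepsilon)},
\]
matching the Case 2 bound. In Case 1 the same sum is trivially bounded, giving the constant bound.

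The main obstacle I expect is the careful bookkeeping at the threshold $n \approx n_0$: one must verify that the failure of $E_{e^{n-1}}$ on $A_n$ with $n > n_0$ is indeed forced through the angular coordinate rather than the time coordinate, which is what licenses the application of \cref{thm:coshJ_large} with the smaller parameter $u_0$ in place of the naive $u = e^{n-1}$. Handling the small-$u_0$ regime (where $u_0 \le \sqrt{1+x^2}$ and the lemma does not apply directly) requires a separate trivial estimate that absorbs these cases into the constant $C$, using that then $\sqrt{t}/\delta$ itself is bounded above in terms of $e^{as}$.
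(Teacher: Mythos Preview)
Your proposal is correct and follows essentially the same approach as the paper: both split the sum at the threshold $n \approx \log u_0$ with $u_0 = e^{-as}\sqrt t/\delta$, use the crude bound \eqref{eq:prob_A_n} below the threshold, and above it observe that the time constraint $\sigma(s)\le t/\delta^2 = u_0^2 e^{2as}$ forces the failure of $E_{e^{n-1}}$ to occur via the angular coordinate, licensing \cref{thm:coshJ_large} with $u=u_0$. The paper's cutoff is shifted by the additive constant $a$ (to guarantee $k>2a$ directly), whereas you absorb the finitely many boundary terms into $C$; both are equivalent up to constants.
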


\begin{proof}
We distinguish two cases. If $n \le {\log(\sqrt{t}/\delta)-as+1+a}$, we have (by \eqref{eq:prob_A_n})
\begin{align*}
    & \sum_{n \le \log(\sqrt{t}/\delta)-as+1+a} e^{np+nl\varepsilon p}\, \pr(A_n)^{1-\varepsilon} \nonumber\\
    &\quad \le C \sum_{n \le \log(\sqrt{t}/\delta)-as+1+a} e^{np+nl\varepsilon p} e^{-2nr_c(1-\varepsilon)} \nonumber\\
    &\quad \le \begin{cases}
    C & \text{if } p+l\varepsilon p-2r_c(1-\varepsilon) < 0\\
    C(e^{-as}\sqrt{t}/\delta)^{p+l\varepsilon p-2r_c(1-\varepsilon)} & \text{if } p+l\varepsilon p-2r_c(1-\varepsilon) > 0.
    \end{cases}
\end{align*}

For $n > {\log(\sqrt{t}/\delta)-as+1+a}$, we have $e^{2(n-1)}e^{2as} > t/\delta^2$ and therefore (by the definition of $A_n$)
\begin{align*}
    A_n \cap \{\sigma(s) \le t/\delta^2\} &\subseteq E_{e^{n-1}}^c \cap \{\sigma(s) \le t/\delta^2\} \\
    &\subseteq \left\{\sigma(s) \le t/\delta^2 \text{ and } 1+\frac{x_{\sigma(s)}^2}{y_{\sigma(s)}^2} > e^{2(n-1)} \right\},
\end{align*}
so \cref{thm:coshJ_large}, applied to $u=e^{-as}\sqrt{t}/\delta$ and $k=2(n-1)-2(\log(\sqrt{t}/\delta)-as)$, implies
\begin{align*}
    \pr(A_n \cap \{\sigma(s) \le t/\delta^2\}) &\le C\, (e^{-as}\sqrt{t}/\delta)^{-2r_c}\, e^{-\alpha(2(n-1)-2(\log(\sqrt{t}/\delta)-as)-2a)^2}\\
    &= C\, (e^{-as}\sqrt{t}/\delta)^{-2r_c}\, e^{-2\alpha(n-(\log(\sqrt{t}/\delta)-as+1+a))^2}.
\end{align*}
Consequently,
\begin{align*}
    &\sum_{n > \log(\sqrt{t}/\delta)-as+1+a} e^{np+nl\varepsilon p}\, \pr( A_n \cap \{\sigma(s) \le t/\delta^2\} )^{1-\varepsilon} \nonumber\\
    &\quad \le C (e^{-as}\sqrt{t}/\delta)^{p+l\varepsilon p} \sum_{n \in \NN} e^{np+nl\varepsilon p}\, (e^{-as}\sqrt{t}/\delta)^{-2r_c(1-\varepsilon)}\, e^{-2\alpha(1-\varepsilon) n^2} \nonumber\\
    &\quad \le C (e^{-as}\sqrt{t}/\delta)^{p+l\varepsilon p-2r_c(1-\varepsilon)}.
\end{align*}
\end{proof}

Hence, by \eqref{eq:moment_on_An_result} and \cref{thm:sum_An},
\begin{align}
    &\ex \left[ 1_{\sigma(s) \le t/\delta^2} |B_{\sigma(s)}|^p \frac{1}{y_{\sigma(s)}^{p/2}} |h_{{\sigma(s)},t/\delta^2}'(z_{\sigma(s)})|^{p/2} \right] \nonumber\\
    &\quad = \sum_{n=0}^\infty \ex \left[ 1_{A_n} 1_{\sigma(s) \le t/\delta^2} |B_{\sigma(s)}|^p \frac{1}{y_{\sigma(s)}^{p/2}} |h_{{\sigma(s)},t/\delta^2}'(z_{\sigma(s)})|^{p/2} \right] \nonumber\\
    &\quad \le \begin{cases}
    C\, \delta^{-(1-\varepsilon)p/2}\, e^{\varepsilon pas/2} & \text{if } p+l\varepsilon p-2r_c(1-\varepsilon) < 0\\
    C\, \delta^{-(1-\varepsilon)p/2}\, (e^{-as}\sqrt{t}/\delta)^{p+l\varepsilon p-2r_c(1-\varepsilon)}\, e^{\varepsilon pas/2} & \text{if } p+l\varepsilon p-2r_c(1-\varepsilon) > 0.
    \end{cases}\label{eq:integrand_moment}
\end{align}

Finally, if $p+l\varepsilon p-2r_c(1-\varepsilon) < 0$, we estimate \eqref{eq:integral_moment} with \eqref{eq:integrand_moment}, so
\begin{align*}
    &\ex \left| \delta^{1/2} \int_0^{\sigma^{-1}(t/\delta^2)} |B_{\sigma(s)}| \frac{1}{y_{\sigma(s)}^{1/2}} |h_{{\sigma(s)},t/\delta^2}'(z_{\sigma(s)})|^{1/2} \, ds \right|^p\\
    &\quad \le \delta^{p/2} \left( \int_0^{\frac{1}{a}\log\frac{C}{\delta}} \left( \ex \left[ 1_{\sigma(s) \le t/\delta^2} |B_{\sigma(s)}|^p \frac{1}{y_{\sigma(s)}^{p/2}} |h_{{\sigma(s)},t/\delta^2}'(z_{\sigma(s)})|^{p/2} \right] \right)^{1/p} \, ds \right)^p\\
    &\quad \le C \delta^{p/2} \left( \int_0^{\frac{1}{a}\log\frac{C}{\delta}} \left( \delta^{-(1-\varepsilon)p/2}\, e^{\varepsilon pas/2} \right)^{1/p} \, ds \right)^p\\
    &\quad = C \delta^{\varepsilon p/2} \left( \int_0^{\frac{1}{a}\log\frac{C}{\delta}} e^{\varepsilon as/2} \, ds \right)^p\\
    &\quad \le C.
\end{align*}

Since $\varepsilon > 0$ can be chosen as small as we want, the condition to apply this is $p < 2r_c = 1+\frac{8}{\kappa}$.

On the other hand, if $p+l\varepsilon p-2r_c(1-\varepsilon) > 0$, we have
\begin{align*}
    &\ex \left| \delta^{1/2} \int_0^{\sigma^{-1}(t/\delta^2)} |B_{\sigma(s)}| \frac{1}{y_{\sigma(s)}^{1/2}} |h_{{\sigma(s)},t/\delta^2}'(z_{\sigma(s)})|^{1/2} \, ds \right|^p\\
    &\quad \le C \delta^{p/2} \left( \int_0^{\frac{1}{a}\log\frac{C}{\delta}} \left( \delta^{-(1-\varepsilon)p/2}\, (e^{-as}\sqrt{t}/\delta)^{p+l\varepsilon p-2r_c(1-\varepsilon)}\, e^{\varepsilon pas/2} \right)^{1/p} \, ds \right)^p\\
    &\quad \le C \delta^{\varepsilon p/2-(p+l\varepsilon p-2r_c(1-\varepsilon))} \left( \int_0^{\frac{1}{a}\log\frac{C}{\delta}} e^{as(\varepsilon/2-(1+l\varepsilon -2r_c(1-\varepsilon)/p))} \, ds \right)^p\\
    &\quad \le \begin{cases}
    C & \text{if } \varepsilon/2-(1+l\varepsilon -2r_c(1-\varepsilon)/p) > 0\\
    C \delta^{\varepsilon p/2-(p+l\varepsilon p-2r_c(1-\varepsilon))} & \text{if } \varepsilon/2-(1+l\varepsilon -2r_c(1-\varepsilon)/p) < 0
    \end{cases}\\
    &\quad =\begin{cases}
    C & \text{if } 2r_c(1-\varepsilon) - p(1+\varepsilon(l-1/2)) > 0\\
    C \delta^{2r_c(1-\varepsilon) - p(1+\varepsilon(l-1/2))} & \text{if } 2r_c(1-\varepsilon) - p(1+\varepsilon(l-1/2)) < 0.
    \end{cases}
\end{align*}

Since $\varepsilon > 0$ can be chosen as small as we want, the condition to apply this is $p > 2r_c = 1+\frac{8}{\kappa}$, and the exponent can be chosen to be greater than $2r_c-p-\varepsilon'$ for any $\varepsilon' > 0$.

With this estimate for \eqref{eq:integral_moment}, the proof of \cref{thm:f_diffkappa_moment} is complete.


\appendix

\section{Appendix: Proof of \cref{thm:grr_integrals_expectation}}
\label{sec:grr_integrals_expectation_pf}

We begin with estimating the expressions for $A_{1j}$ which involve the time difference, and then estimate the expressions for $A_{2j}$ which involve the $\kappa$ difference.

\subsubsection*{The $\Delta t$ term}

For this part, we again suppress writing $\kappa$, although all expressions depend on a parameter $\kappa$.

The moment estimates are all similar. In $A_{11}$, we will encounter the expression $\ex |\hat f_t'(iu)-\hat f_s'(iu)|^\lambda$ which we estimate by $\ex |\hat f_t'(iu)-\hat f_s'(iu)|^\lambda \lesssim (a(s)+a(t)) u^\zeta$ with \cref{thm:moment_estimate} (which is sufficient since $|t-s| \ge u^2$). Together with Minkowski's inequality, we have
\begin{align*}
    \ex |A_{11}(t,s;\kappa)|^\lambda &\le \left( \int_0^{y \wedge |t-s|^{1/2}} \left( \ex|\hat f_t'(iu)-\hat f_s'(iu)|^\lambda \right)^{1/\lambda} \, du \right)^\lambda \\
    &\lesssim \left( \int_0^{y \wedge |t-s|^{1/2}} (a(s)+a(t))^{1/\lambda} u^{\zeta/\lambda} \, du \right)^\lambda \\
    &\lesssim (a(s)+a(t)) \left( y \wedge |t-s|^{1/2} \right)^{\zeta+\lambda},
\end{align*}
assuming $\zeta+\lambda > 0$. Consequently,
\begin{align*}
\ex \iint \frac{|A_{11}(t,s;\kappa)|^\lambda}{|t-s|^{\beta_1}} \, ds \, dt 
&\lesssim \iint_{|t-s| \le y^2} \frac{(a(s)+a(t)) |t-s|^{(\zeta+\lambda)/2}}{|t-s|^{\beta_1}} \, ds \, dt \\
&\qquad + \iint_{|t-s| > y^2} \frac{(a(s)+a(t)) y^{\zeta+\lambda}}{|t-s|^{\beta_1}} \, ds \, dt \\
&\lesssim y^{\zeta+\lambda-2\beta_1+2} \int a(t) \, dt,
\end{align*}
assuming $1 < \beta_1 < \frac{\zeta+\lambda}{2}+1$.

The terms $A_{12}$, $A_{13}$ only appear when $|t-s|^{1/2} < y$.

For $A_{12}$, we get (again by Minkowski's inequality and \cref{thm:moment_estimate})
\begin{align*}
    \ex |A_{12}(t,s;\kappa)|^\lambda &\le \left( \int_{y \wedge |t-s|^{1/2}}^y \frac{|t-s|}{u^2} \left( \ex|\hat f_s'(iu)|^\lambda \right)^{1/\lambda} \, du \right)^\lambda \\
    &\le |t-s|^{\lambda} \left( \int_{y \wedge |t-s|^{1/2}}^y a(s)^{1/\lambda} u^{\zeta/\lambda} u^{-2} \, du \right)^\lambda \\
    &\lesssim a(s) |t-s|^{(\zeta+\lambda)/2}
\end{align*}
using the fact that $\zeta < \lambda$ (see \eqref{eq:moment_estimate_parameters}).

Finally, for $A_{13}$, note that $\norm{B}_{C^{1/2^{(-)}}}$ has arbitrarily high moments, so that we can apply Hölder's inequality and get
\[ 
\ex\left[ \left( |\hat f_s'(iu)| \left(1+\norm{B}_{C^{1/2^{(-)}}}\right)^{2l+1} \right)^{\lambda^{(-)}} \right]
\lesssim \left( \ex |\hat f_s'(iu)|^\lambda \right)^{1^{(-)}}
\lesssim (a(s) u^\zeta)^{1^{(-)}}
\]
Consequently, again by Minkowski's inequality,
\begin{align*}
    \ex |A_{13}(t,s;\kappa)|^{\lambda^{(-)}} &\lesssim \left( \int_{y \wedge |t-s|^{1/2}}^{y \wedge 2|t-s|^{1/2}} u^{-1} (a(s) u^\zeta)^{1/\lambda^{(-)}} |t-s|^{1/2^{(-)}} \, du \right)^{\lambda^{(-)}}\\
    &\lesssim a(s)^{1^{(-)}} |t-s|^{(\zeta+\lambda)^{(-)}/2}.
\end{align*}

This shows
\begin{align*}
\ex \iint \frac{|A_{12}(t,s;\kappa)|^\lambda}{|t-s|^{\beta_1}} \, ds \, dt &\lesssim y^{\zeta+\lambda-2\beta_1+2} \int a(t) \, dt, \\
\ex \iint \frac{|A_{13}(t,s;\kappa)|^{\lambda^{(-)}}}{|t-s|^{\beta_1}} \, ds \, dt &\lesssim y^{(\zeta+\lambda)^{(-)} -2\beta_1+2} \int a(t)^{1^{(-)}} \, dt
\end{align*}
if $\beta_1 < \frac{\zeta+\lambda}{2}+1$.

\subsubsection*{The $\Delta \kappa$ term}

$A_{21}$ will again just be estimated using \cref{thm:moment_estimate} on
\begin{equation*}
\ex |(\hat f^\kappa_s)'(iu)-(\hat f^{\tilde\kappa}_s)'(iu)|^\lambda \lesssim a(s,\zeta_\kappa) u^{\zeta_\kappa} + a(s,\zeta_{\tilde\kappa}) u^{\zeta_{\tilde\kappa}}.
\end{equation*}

Then, by Minkowski's inequality,
\begin{align*}
\ex |A_{21}(s;\kappa,\tilde\kappa)|^\lambda 
&\le \left( \int_0^{y \wedge |\kappa-\tilde\kappa|^{p/(\zeta+\lambda)}} \left( \ex|(\hat f^\kappa_s)'(iu)-(\hat f^{\tilde\kappa}_s)'(iu)|^\lambda \right)^{1/\lambda} \, du \right)^\lambda\\
&\lesssim \left( \int_0^{y \wedge |\kappa-\tilde\kappa|^{p/(\zeta+\lambda)}} a(s,\zeta_\kappa)^{1/\lambda} u^{\zeta/\lambda} \, du \right)^\lambda\\
&\lesssim a(s,\zeta_\kappa) (y^{\zeta+\lambda} \wedge |\kappa-\tilde\kappa|^p),
\end{align*}
assuming $\zeta+\lambda > 0$, and consequently
\begin{align*}
\ex \iint \frac{|A_{21}(s;\kappa,\tilde\kappa)|^{\lambda}}{|\kappa-\tilde\kappa|^{\beta_2}} \, d\kappa \, d\tilde\kappa \lesssim y^{(\zeta+\lambda)(p-\beta_2+1)/p} \int a(s,\zeta_\kappa) \, d\kappa,
\end{align*}
assuming $p-\beta_2+1 > 0$, i.e. $\beta_2 < p+1$.

For $A_{22}$ we apply \cref{thm:fprime_diffkappa_moment} when $\kappa$, $\tilde\kappa$ are close to each other, i.e. $|\kappa-\tilde\kappa|^{p/(\zeta+\lambda)} \le y$. This gives us
\begin{equation*}
\ex |(\hat f^\kappa_s)'(iu)-(\hat f^{\tilde\kappa}_s)'(iu)|^p \lesssim |\kappa-\tilde\kappa|^p u^{-p}.
\end{equation*}

In this case Minkowski's inequality does not give us quite the optimal estimate (although it is still sufficient), therefore we do something similar. Let $b \in \RR$ be a constant that will be chosen later.

By H\"older's inequality,
\begin{align*}
\ex |A_{22}(s;\kappa,\tilde\kappa)|^p 
&= \ex \left( \int_{|\kappa-\tilde\kappa|^{p/(\zeta+\lambda)}}^y |(\hat f^\kappa_s)'(iu)-(\hat f^{\tilde\kappa}_s)'(iu)|\, u^b u^{-b} \, du \right)^p\\
&\lesssim \ex \left( \int_{|\kappa-\tilde\kappa|^{p/(\zeta+\lambda)}}^y |(\hat f^\kappa_s)'(iu)-(\hat f^{\tilde\kappa}_s)'(iu)|^p u^{bp} \, du \right) y^{p-1-bp}\\
&\lesssim \left( \int_{|\kappa-\tilde\kappa|^{p/(\zeta+\lambda)}}^y |\kappa-\tilde\kappa|^p u^{-p} u^{bp} \, du \right) y^{p-1-bp}\\
&\lesssim |\kappa-\tilde\kappa|^{p+(-p+bp+1)p/(\zeta+\lambda)} y^{p-1-bp},
\end{align*}
assuming $p-1-bp > 0$.

Then
\begin{align*}
\ex \iint \frac{|A_{22}(s;\kappa,\tilde\kappa)|^p}{|\kappa-\tilde\kappa|^{\beta_2}} \, d\kappa \, d\tilde\kappa \lesssim y^{(\zeta+\lambda)(p-\beta_2+1)/p},
\end{align*}
assuming $p+(-p+bp+1)p/(\zeta+\lambda)-\beta_2+1 > 0$.

These estimates work if we can choose $b$ such that\\
$bp \in {] (\beta_2-p-1)(\zeta+\lambda)/p+p-1,\, p-1[}$, i.e. when $\beta_2 < p+1$.

This finishes the estimates of \cref{thm:grr_integrals_expectation}.

\bibliographystyle{alpha}

\end{document}